\documentclass[10pt,reqno]{amsart}

\marginparwidth0.5cm

\textwidth145mm
\textheight220mm
\hoffset-15mm
\voffset-1mm

\usepackage{amsmath, amsthm, amssymb}
\usepackage{amsfonts}
\usepackage[ansinew]{inputenc}
\usepackage[dvips]{epsfig}
\usepackage{graphicx}
\usepackage[english]{babel}



\pagestyle{myheadings}

\usepackage{cite}
\usepackage{graphicx}
\usepackage{amscd}
\usepackage{xcolor}
\usepackage{bm}
\usepackage{enumerate}

\usepackage{verbatim}
\usepackage{hyperref}
\usepackage{amstext}
\usepackage{latexsym}
%

\let\oldsqrt\sqrt
\def\sqrt{\mathpalette\DHLhksqrt}
\def\DHLhksqrt#1#2{%
\setbox0=\hbox{$#1\oldsqrt{#2\,}$}\dimen0=\ht0
\advance\dimen0-0.2\ht0
\setbox2=\hbox{\vrule height\ht0 depth -\dimen0}%
{\box0\lower0.4pt\box2}}

\allowdisplaybreaks

\newcommand{\loglap}{L_{\text{\tiny $\Delta \:$}}\!}
\newcommand{\logrel}{ \text{$(I-\Delta)^{\log}$}}

\newcommand{\R}{\mathbb{R}} 
\newcommand{\N}{\mathbb{N}} 

\newcommand{\F}{\mathcal{F}} 

\newcommand{\dist}{\textnormal{dist}} 
\newcommand{\supp}{\textnormal{supp}} 


\renewcommand{\H}{{\mathcal H}}

\renewcommand{\phi}{\varphi}

\newcommand{\g}{\omega}

\newcommand{\cC}{{\mathcal C}}

\newcommand{\cE}{{\mathcal E}}
\newcommand{\cF}{{\mathcal F}}

\newcommand{\cH}{{\mathcal H}}

\newcommand{\cL}{{\mathcal L}}

\newcommand{\cP}{{\mathcal P}}

\newcommand{\cR}{{\mathcal R}}

\newcommand{\cV}{{\mathcal V}}

\theoremstyle{definition}
\newtheorem{defi}{Definition}[section]
\newtheorem{remark}[defi]{Remark}

\theoremstyle{plain} 
\newtheorem{thm}[defi]{Theorem}
\newtheorem{prop}[defi]{Proposition}
\newtheorem{lemma}[defi]{Lemma}

\theoremstyle{definition}

\numberwithin{equation}{section}

 \title{ The  logarithmic  Schr\"odinger operator and associated Dirichlet  problems }

\author[]
{ Pierre Aime Feulefack}

\address{Goethe-Universit\"{a}t Frankfurt, Institut f\"{u}r Mathematik.
Robert-Mayer-Str. 10, D-60629 Frankfurt, Germany,\ and \  African Institute for Mathematical Sciences in Senegal (AIMS Senegal), 
KM2, Route de Joal, B.P. 14 18. Mbour, S\'en\'egal.}
\email{feulefac@math.uni-frankfurt.de,\ \ pierre.a.feulefack@aims-senegal.org}

%

%
%
%


\begin{document}
\maketitle
\begin{abstract}
In this note, we study the integrodifferential  operator $(I-\Delta)^{\log}$  corresponding to the logarithmic symbol $\log(1+|\xi|^2)$, which is a singular integral operator given by
\[
 (I-\Delta)^{\log} u(x)=d_{N}\int_{\R^N}\frac{u(x)-u(x+y)}{|y|^{N}}\omega(|y|)\ dy,
\]
where $d_N=\pi^{-\frac{N}{2}}$, $\omega(r)=2^{1-\frac{N}{2}}r^{\frac{N}{2}}K_{\frac{N}{2}}(r)$ and $K_{\nu}$ is  the modified Bessel function of  second kind with index $\nu$.
This operator is the L\'evy generator of the variance gamma process and arises as  derivative $\partial_s\Big|_{s=0}(I-\Delta)^s$ of fractional relativistic Schr\"{o}dinger operators at $s=0$. In order to study  associated  Dirichlet problems  in  bounded domains, we first introduce the functional analytic framework and some properties related to $(I-\Delta)^{\log}$, which allow  to characterize the induced  eigenvalue problem and  Faber-Krahn type inequality.   We also derive a decay estimate in $\R^N$ of the   Poisson problem and   investigate  small order  asymptotics  $s\to 0^+$ of Dirichlet eigenvalues and eigenfunctions of  $(I-\Delta)^s$ in a bounded open Lipschitz set. 
\end{abstract}

{\footnotesize
\begin{center}
\textit{Keywords.}   Logarithmic symbol,   Faber-Krahn inequality,  gamma process,   small order asymptotics. 
\end{center}
}

\section{Introduction and main result}\label{intro}
The  present paper is devoted to the  study of the integrodifferential operator    corresponding to the logarithmic symbol $\log(1+|\cdot|^2)$ and associated Dirichlet problems in domains.  This symbol is known in the probability   literature as  the characteristic exponent of the symmetric  variance  gamma process in $\R^N$ and can be seen as  a  sub-class  of increasing L\'evy process \cite{RefRF}. As particular case  of   geometric stable processes $\log(1+|\cdot|^{2s})$ for  $s\in(0,1)$, it plays an important role in the study of Markov process \cite{RefJB} and finds applications to many different fields such as engineering reliability, credit risk theory in structure models, option pricing in mathematical finance \cite{RefL2} and it is used to study the heavy-tailed financial models \cite{RefHRZ,RefDPE,RefTJAK}. It  was recently used  in wave equation to model  damping mechanism  in $\R^N$ (see \cite{RefRCAR}).

Let us emphasize that the associated operator $(I-\Delta)^{\log}$, which we call the logarithmic Schrödinger operator in the following, has been studied extensively in the literature from a probabilistic and potential theoretic point of view, see e.g. \cite{RefMA,RefMRZ,RefL1,RefHRZ,RefPAi,RefRZ} and the references therein. 
The main purpose of the present paper is to give an account on functional analytic properties of this operator from a PDE point of view. So some of the results we present here are not new but are stated under somewhat different assumptions related to the concept of weak solutions. Moreover, we present proofs not relying on probabilistic techniques but instead on purely analytic methods which are to some extend simpler and more accessible to PDE oriented readers.

Integrodifferential operators of order close to zero are getting  increasing interest in the study of linear and nonlinear  integrodifferential equations, see for e.g. \cite{RefMimi, RefMA ,RefEA,RefHT,HS2121,laptev-weth}  with references therein. In particular, the logarithmic Schrödinger operator $(I-\Delta)^{\log}$ has the same singular local behavior as that of the logarithmic Laplacian $L_{\Delta}$ studied in \cite{RefHT}, while it eliminates the integrability problem of $L_{\Delta}$ at infinity.  We recall that  for  compactly supported Dini continuous functions $\phi : \R^N \to \R$, the  logarithmic Laplacian $L_{\Delta}$ is defined by 
\begin{equation}\label{logarithmic}
\loglap\phi(x) = c_N\lim_{\epsilon\to 0}\int_{\R^N\setminus B_{\epsilon}(x)}\frac{\phi(x)1_{B_1(x)}(y)-\phi(y)}{|x-y|^N}\ dy +\rho_N\phi(x),
\end{equation}
with the constants  $c_{N}:=\frac{\Gamma(N/2)}{\pi^{N/2}}$ and $\rho_N:=2\ln 2+\psi(\frac{N}{2})-\gamma$, see \cite{RefHT} for more details. 
Similarly as in \cite{RefHT}, the starting point of the present paper is the observation
\begin{equation}\label{limit-to-0}
\lim_{s\to 0^+}(I-\Delta)^{s}u= u\quad \text{ for }\ u\in C^{2}(\R^N),
\end{equation}
where for $s\in (0,1)$, the operator $(I-\Delta)^{s}$ stands for the relativistic Schr\"{o}dinger operator which, for sufficiently regular function $u:\R^N\to \R$, is  represented via hypersinglar integral (see \cite[page 548]{RefSAO} and \cite{RefMMV})   
\begin{equation}\label{Integral-Def}
 (I-\Delta)^s u(x)=u(x)+d_{N,s}\lim_{\epsilon\to 0^+}\int_{\R^N\setminus B_{\epsilon}(0)}\frac{u(x+y)-u(x)}{|y|^{N+2s}}\omega_{s}(|y|)\ dy,
\end{equation}
where $d_{N,s}= \frac{\pi^{-\frac{N}{2}}4^s}{\Gamma(-s)}$  is a  normalization constant and the  function $\omega_s$ is given by
\begin{equation}\label{Def_omega}
\begin{split}
\omega_{s}(|y|)&=2^{1-\frac{N+2s}{2}}|y|^{\frac{N+2s}{2}}K_{\frac{N+2s}{2}}(|y|)= \int_{0}^{\infty}t^{-1+\frac{N+2s}{2}}e^{-t-\frac{|y|^2}{4t}}dt.
\end{split}
\end{equation}
In particular, if $u\in C^{2}(\R^N)$, then $(I-\Delta)^su(x)$ is well defined by \eqref{Integral-Def} for every $x\in \R^N$. 
 Here the function $K_\nu$ is  the modified Bessel function of the second kind with index $\nu>0$ and it is given by the  expression
\[
K_{\nu}(r)=\frac{(\pi/2)^{\frac{1}{2}}r^{\nu}e^{-r}}{\Gamma(\frac{2\nu+1}{2})}\int_{0}^{\infty}e^{-rt}t^{\nu-\frac{1}{2}}(1+{t}/{2})^{\nu-\frac{1}{2}}dt.
\]
The normalization constant $d_{N,s}$ in \eqref{Integral-Def} is chosen such that the operator $(I-\Delta)^s$ is equivalently defined via its Fourier representation  given by
\begin{equation}\label{Fourier-logrel}
\cF((I-\Delta)^su)(\xi) = (1+|\xi|^2)^{s}\cF(u)(\xi), \quad\text{ for a.e }\; \xi \in \R^N,
\end{equation}
where $\F$  denotes the usual Fourier transform.
It therefore follows from \eqref{limit-to-0} that one may expect a Taylor expansion with respect to  parameter $s$ of the operator $(I-\Delta)^s$ near zero for $u\in C^{2}(\R^N)$ and $x\in \R^N$ as
\[
(I-\Delta)^su(x) = u(x) +s(I-\Delta)^{\log}u(x) +o(s) \quad \text{ as } \ s\to 0^+,
\]
where, the   logarithmic  Schr\"{o}dinger operator $(I-\Delta)^{\log}$  appears as  the first order  term in the above expansion.  
Indeed, we have the following.
 \begin{thm}\label{Result-1}
 Let $u\in C^{\alpha}(\R^N)$ for some $\alpha>0$ and $1< p\le  \infty$. Then 
 \begin{equation}\label{kernel}
 \begin{split}
 (I-\Delta)^{\log}u(x)&=\frac{d}{ds}\Big|_{s=0} [(I-\Delta)^s u](x)\\
 &=d_{N}\int_{\R^N}\frac{u(x)-u(x+y)}{|y|^{N}}\omega(|y|)\ dy= \int_{\R^N}(u(x)-u(x+y))J(y)\ dy,
 \end{split}
  \end{equation}
  for $x\in \R^N$, where  ~  $d_N:=\displaystyle\pi^{-\frac{N}{2}}=-\lim_{s\to 0^+}\frac{d_{N,s}}{s}$, ~~$J(y)  =d_N\frac{\omega(|y|)}{|y|^{N}},$ ~ and 
  \begin{equation}\label{lim-omega}
\begin{split}
\omega(|y|):= 2^{1-\frac{N}{2}}|y|^{\frac{N}{2}}K_{\frac{N}{2}}(|y|)=\int_{0}^{\infty}t^{-1+\frac{N}{2}}e^{-t-\frac{|y|^2}{4t}}dt.
\end{split}
\end{equation}
  
  Moreover,  
 \begin{itemize}
 \item[(i)] If $u\in L^p(\R^N)$ ~ for ~ $1\le p\le \infty$, ~ then ~$(I-\Delta)^{\log}u\in L^p(\R^N)$ and $$\text{$\displaystyle\frac{(I-\Delta)^s u-u}{s}  \ \to \  (I-\Delta)^{\log}u$ \  in \  $L^p(\R^N)$ \  as \  $s\to 0^+$}$$.
 \item[(ii)]$\displaystyle\cF((I-\Delta)^{\log}u)(\xi) =  \log(1+|\xi|^2)\cF(u)(\xi), \qquad \text{for almost every \;\;}\xi \in \R^N$.
 \end{itemize}
 \end{thm}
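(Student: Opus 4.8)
The plan is to build everything on the subordination representation of $(I-\Delta)^s$ via the Gaussian semigroup, which bypasses the principal value in \eqref{Integral-Def} and delivers \eqref{kernel}, (i) and (ii) by soft arguments. Write $p_t(y)=(4\pi t)^{-N/2}e^{-|y|^2/4t}$. The integral formula \eqref{Def_omega} for $\omega_s$, after the substitution $t\mapsto|y|^2/4t$ and a constant computation, yields the pointwise identity
\[
d_{N,s}\,\frac{\omega_s(|y|)}{|y|^{N+2s}}=\frac{1}{\Gamma(-s)}\int_0^\infty e^{-t}p_t(y)\,t^{-1-s}\,dt ;
\]
substituting this into \eqref{Integral-Def} (which is absolutely convergent once $s$ is small, say $2s<\alpha$) and applying Fubini together with $\int_{\R^N}p_t(y)\,dy=1$ recasts \eqref{Integral-Def} as
\begin{equation*}
(I-\Delta)^s u(x)=u(x)+\frac{1}{\Gamma(-s)}\int_0^\infty e^{-t}\big(p_t*u(x)-u(x)\big)\,t^{-1-s}\,dt .
\end{equation*}
The single a priori estimate used throughout is $|p_t*u(x)-u(x)|\le\int p_t(y)\,|u(x+y)-u(x)|\,dy\le C\,t^{\beta}$, uniformly in $x$, where $\beta:=\tfrac12\min(\alpha,2)>0$ (for $\alpha>1$ one passes to second differences, using that $p_t$ is even), together with the exponential factor $e^{-t}$ that controls the regime $t\to\infty$.

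To obtain \eqref{kernel} I would divide the last identity by $s$, use $s\Gamma(-s)=-\Gamma(1-s)\to-1$ — equivalently $d_{N,s}/s=-\pi^{-N/2}4^s/\Gamma(1-s)\to-\pi^{-N/2}$, which is the asserted $d_N=-\lim_{s\to0^+}d_{N,s}/s=\pi^{-N/2}$ — and pass to the limit $s\to0^+$ inside the integral by dominated convergence, the integrand being dominated for $s\in(0,s_0)$ by $C\,e^{-t}t^{\beta-1-s_0}\in L^1(0,\infty)$ whenever $s_0<\beta$. This gives
\[
\frac{d}{ds}\Big|_{s=0}(I-\Delta)^s u(x)=\int_0^\infty e^{-t}\big(u(x)-p_t*u(x)\big)\,\frac{dt}{t}.
\]
Running the Fubini step backwards — legitimate since now $\int_0^\infty e^{-t}t^{-1}\!\int_{\R^N} p_t(y)|u(x+y)-u(x)|\,dy\,dt\le C\int_0^\infty e^{-t}t^{\beta-1}\,dt<\infty$ — turns this into $\int_{\R^N}\big(u(x)-u(x+y)\big)\big[\int_0^\infty e^{-t}p_t(y)\,t^{-1}\,dt\big]\,dy$, and the bracketed integral equals $d_N\,\omega(|y|)/|y|^N=J(y)$ by letting $s\to0$ in the identity above (recall \eqref{lim-omega}). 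Hence \eqref{kernel}.

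For (ii) I would take the Fourier transform in $\mathcal S'(\R^N)$ of $(I-\Delta)^{\log}u=\int_0^\infty e^{-t}(u-p_t*u)\,t^{-1}\,dt$: pairing with $\phi\in\mathcal S$, using $\cF(p_t)(\xi)=e^{-t|\xi|^2}$, Fubini, and the Frullani integral $\int_0^\infty\big(e^{-t}-e^{-t(1+|\xi|^2)}\big)\,t^{-1}\,dt=\log(1+|\xi|^2)$ yields $\cF\big((I-\Delta)^{\log}u\big)(\xi)=\log(1+|\xi|^2)\cF(u)(\xi)$. For (i), assuming in addition $u\in L^p$, Minkowski's integral inequality applied to the two representations above gives $\|(I-\Delta)^{\log}u\|_p\le\int_0^\infty e^{-t}\|p_t*u-u\|_p\,t^{-1}\,dt$ and
\[
\Big\|\tfrac{(I-\Delta)^su-u}{s}-(I-\Delta)^{\log}u\Big\|_p\le\int_0^\infty e^{-t}\,\|p_t*u-u\|_p\,\Big|\tfrac{t^{-1-s}}{\Gamma(1-s)}-t^{-1}\Big|\,dt .
\]
Using the quantitative smallness $\|p_t*u-u\|_p\le C\,t^{\gamma}$ for some $\gamma=\gamma(u)>0$ (for $p=\infty$ one may take $\gamma=\beta$; for $p<\infty$ one combines the Hölder bound with $u\in L^p$, e.g. splitting $\R^N$ into $\{|u|>\varepsilon\}$ and its complement), the elementary estimate $|t^{-1-s}/\Gamma(1-s)-t^{-1}|\le C(t^{-1-s_0}+t^{-1})$ for $t\le1$ and $\le Ct^{-1}$ for $t\ge1$, and dominated convergence for $s\in(0,s_0)$ with $s_0<\gamma$, one concludes $(I-\Delta)^{\log}u\in L^p$ and the claimed $L^p$-convergence.

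The hard part is exactly this uniform-in-$s$ domination that licenses moving $\lim_{s\to0^+}$ inside the $t$-integral (and its $L^p$ version): near $t=0$ one must use the Hölder regularity of $u$, near $t=\infty$ the weight $e^{-t}$, and one has to keep track of the first-order behaviour of $\Gamma(1-s)$ and of $t^{-s}$ at $s=0$ — in particular $s$ cannot be allowed up to $1$, only below a threshold set by the regularity (and, when $p<\infty$, the decay) of $u$. Everything else — the Fubini interchanges, the evaluations of the Bessel-type and Frullani integrals, and the constant bookkeeping $d_N=\pi^{-N/2}$ — is routine.
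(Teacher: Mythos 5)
Your derivation of the pointwise identity \eqref{kernel} is correct and takes a genuinely different route from the paper: you convert the kernel via
\[
d_{N,s}\,\frac{\omega_s(|y|)}{|y|^{N+2s}}=\frac{1}{\Gamma(-s)}\int_0^\infty e^{-t}p_t(y)\,t^{-1-s}\,dt,
\]
obtain a Balakrishnan-type subordination formula for $(I-\Delta)^s$, and differentiate at $s=0$ by dominated convergence in the single variable $t$, whereas the paper works directly on the singular integral \eqref{Integral-Def}, splits it at a radius $\epsilon$ into a near part $A_\epsilon$ and a far part $D_\epsilon$, and controls each piece with the Bessel asymptotics before letting $\epsilon\to 0$. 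Your route is cleaner for the pointwise statement and for $p=\infty$, and the Frullani computation produces the symbol $\log(1+|\xi|^2)$ directly, while the paper deduces (ii) from the $p=2$ case of (i) and continuity of $\cF$ on $L^2$. One small caveat on (ii): your argument gives the identity in the sense of tempered distributions; to state it ``for a.e.\ $\xi$'' as in the theorem you need $\cF u$ to be a function, so either assume $u\in L^2(\R^N)$ (as the paper implicitly does) or add a line reducing to that case.

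The genuine gap is in (i) for $1\le p<\infty$. Everything there rests on the claimed rate $\|p_t\ast u-u\|_{L^p}\le C\,t^{\gamma}$ for some $\gamma=\gamma(u)>0$, for which you only offer the hint of splitting $\{|u|>\varepsilon\}$ and its complement. No such rate follows from $u\in C^{\alpha}(\R^N)\cap L^p(\R^N)$: a bound $\|p_t\ast u-u\|_{L^p}\lesssim t^{\gamma}$ (for small $\gamma$) is equivalent to a positive amount of smoothness on the $L^p$-scale, i.e.\ to a power rate for the translation modulus $\|u(\cdot+h)-u\|_{L^p}$ (a Besov $B^{2\gamma}_{p,\infty}$ condition), and uniform H\"older continuity plus $p$-integrability do not give this. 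Concretely, $u(x)=(1+|x|)^{-N/p}\bigl(\log(2+|x|)\bigr)^{-2/p}\sin(|x|^2)$ is bounded, belongs to $L^p(\R^N)$, and is uniformly $\alpha$-H\"older for $\alpha\le N/p$, yet $\|u(\cdot+h)-u\|_{L^p}$ decays only like $\bigl(\log(1/|h|)\bigr)^{-1/p}$, so the rate you need fails and the dominating integral $\int_0^1\|p_t\ast u-u\|_{L^p}\,t^{-1}dt$ is not finite; neither your Minkowski bound for $\|(I-\Delta)^{\log}u\|_{L^p}$ nor the dominated-convergence step for the difference quotient goes through. The obstruction is not an artifact of the subordination method: for such an oscillating $u$ the short-range part of $(I-\Delta)^{\log}u$ acquires an extra factor of order $\log|x|$ at infinity, so for finite $p$ one needs either an additional hypothesis (decay of the H\"older seminorm, or directly an $L^p$-modulus with a Dini/power rate) or an argument of a different nature. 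The case $p=\infty$ of your argument is complete, since there $\gamma=\beta$ does come from the pointwise H\"older bound.
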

 \vspace{0.2cm}
 We note that in the particular case $N=1$, it follows from the definition of  $\omega$ in \eqref{lim-omega} (see also \cite[(2.4)]{RefTM} and \cite[Remark 4.5]{RefHRZ})  that  $\omega(r) = \pi^{N/2}e^{-r}$ and 
\begin{equation}\label{Operetor-N=1}
(I-\Delta)^{\log} u(x)= P.V.\int_{\R}\frac{ u(x)-u(y)}{|x-y|}e^{-|x-y|}\ dy.
\end{equation}
We note here that the operator in  \eqref{Operetor-N=1} appears  in  \cite{RefJAN} and is identified as symmetrized  Gamma process (see also \cite[Example 1]{RefVRL}). We stress however that the symbol of this operator is $\log(1+|\xi|^2)$ and not $\log(1+|\xi|)$ as claimed in \cite[Page 183]{RefJAN}.
The representation of  $J$ in \eqref{kernel}   provides an explicit expression for the kernel of the variance Gamma process in $\R^N$ and gives the following asymptotics expansions
\begin{equation}\label{Asymptoti-o}
	\begin{split}
	J(z)\sim\quad\left\{\begin{aligned}
    \pi^{-\frac{N}{2}}\Gamma(\frac{N}{2})|z|^{-N}\hspace{1.5cm}  & \quad\text{  as }\quad |z|\to 0\\
		  \pi^{-\frac{N-1}{2}}2^{-\frac{N-1}{2}} |z|^{-\frac{N+1}{2}}e^{-|z|} &\quad\text{  as }\quad |z|\to \infty.
	\end{aligned}\right.
	\end{split}
	\end{equation} 
Indeed, these expansions follow directly from (\ref{lim-omega}) and the asymptotics expansions  of the modified Bessel  function $K_{\nu}$ (see  Section \ref{basic}),  (see also  \cite[Theorem 3.4 and  3.6]{RefHRZ} for other proof).
  
  The Green function of the operator $(I-\Delta)^{\log}$   is given (see \cite{RefTM,RefPAi}) by
\begin{equation}\label{Green}
G(x) =  \int_0^{\infty} q_{t}(x)\ dt\quad  \quad x\in \R^N,
\end{equation}
where  for  $t>0$, $q_t :\R^N\to \R $ is the density of the symmetry variance Gamma process i.e.,  for all $t>0$ and $x\in \R^N$,
$$
q_t(x)\ge 0, \qquad \int_{\R^N}q_t(x)\ dx=1  \qquad\text{and }\qquad \cF(q_t)(\xi) = e^{-t\log(1+|\xi|^2)}.
$$
It follows  from \eqref{lim-omega} that   for any $t>0$,
\begin{equation}
\begin{split}
q_{t}(x) 
& =\  \frac{2^{1-N}}{\pi^{N/2}\Gamma(t)}\left(\frac{|x|}{2}\right)^{t-\frac{N}{2}}K_{t-\frac{N}{2}}(|x|),
\end{split}
\end{equation}
and the Green function for  $(I-\Delta)^{\log}$ then writes 
\begin{equation}\label{Gree2}
G(x)=\frac{2^{1-N}}{\pi^{N/2}}\int_{0}^{\infty}\frac{1}{\Gamma(t)}\left(\frac{|x|}{2}\right)^{t-\frac{N}{2}}K_{t-\frac{N}{2}}(|x|)\ dt.
\end{equation}
Using the asymptotics expansions for the modified Bessel function (see \eqref{M-Bessl-Asymtotic} Section \ref{basic}), we have the following proposition.
\begin{prop}\label{Decay}  The function $G$  in (\ref{Gree2}) satisfies the asymptotics properties
\begin{equation}
	\begin{split}
	G(x)\sim\quad\left\{\begin{aligned}
    c_N|x|^{-N}\hspace{2.5cm}& \quad\text{  as }\quad |x|\to 0\\
		  c_N2^{\frac{N-1}{2}}\pi^{1/2}|x|^{-\frac{N+1}{2}}e^{-|x|}&\quad\text{  as }\quad |x|\to \infty.
	\end{aligned}\right.
	\end{split}
	\end{equation}
Moreover, for $f\in L^1(\R^N)$,  the solution   $u= G\ast f$ of the equation $(I-\Delta)^{\log}u =f\ \text{in}\ \R^N$ satisfies
\begin{equation}
	\begin{split}
	u(x)=\quad\left\{\begin{aligned}
    O(|x|^{-N})& \quad\text{  as }\quad |x|\to 0\\
		  O(e^{-|x|})&\quad\text{  as }\quad |x|\to \infty.
	\end{aligned}\right.
	\end{split}
	\end{equation}
\end{prop}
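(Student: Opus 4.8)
The plan is to extract both asymptotic regimes of $G$ directly from the integral representation \eqref{Gree2} by inserting the small- and large-argument expansions of the modified Bessel function $K_\nu$ recorded in Section~\ref{basic}, and then to pass from $G$ to $u=G\ast f$ by splitting the convolution integral into a near part $\{|x-y|<1\}$ and a far part $\{|x-y|\ge1\}$. Since every integrand that appears below is nonnegative, Tonelli's theorem justifies all interchanges of integration without further comment.

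For $|x|\to0$ put $r=|x|$ and split the $t$-integral in \eqref{Gree2} at $t=\tfrac N2$. On $(\tfrac N2,\infty)$ the index $t-\tfrac N2$ of $K_{t-N/2}$ is positive, the small-argument expansion gives $\big(\tfrac r2\big)^{t-N/2}K_{t-N/2}(r)\to\tfrac12\Gamma\!\big(t-\tfrac N2\big)$ locally uniformly in $t$ as $r\to0$, and since $\Gamma\!\big(t-\tfrac N2\big)/\Gamma(t)\sim t^{-N/2}$ for large $t$, dominated convergence shows this piece converges to a finite constant as $r\to0$ (at least when $N\ge3$, so that the $t$-integral over $(\tfrac N2,\infty)$ converges) and is therefore irrelevant for the leading singularity. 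On $(0,\tfrac N2)$ the index is negative, $K_{t-N/2}=K_{N/2-t}$, and $\big(\tfrac r2\big)^{t-N/2}K_{t-N/2}(r)\sim\tfrac12\Gamma\!\big(\tfrac N2-t\big)\big(\tfrac r2\big)^{2t-N}$; writing $\big(\tfrac r2\big)^{2t-N}=\big(\tfrac r2\big)^{-N}e^{-2t\log(2/r)}$ shows that the mass of this $t$-integral concentrates at $t=0$ as $r\to0$, so that a Watson-type evaluation of $\int_0^{N/2}\frac{\Gamma(N/2-t)}{\Gamma(t)}\big(\tfrac r2\big)^{2t-N}\,dt$, using $1/\Gamma(t)=t+O(t^2)$ near $t=0$, isolates the leading term. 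Multiplying back by the prefactor $\tfrac{2^{1-N}}{\pi^{N/2}}$ of \eqref{Gree2} gives the asserted behaviour $G(x)\sim c_N|x|^{-N}$.

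For $|x|\to\infty$ the decay is again read off from \eqref{Gree2} through the large-argument behaviour of $K_\nu$, the outcome being $G(x)\sim c_N2^{(N-1)/2}\pi^{1/2}|x|^{-(N+1)/2}e^{-|x|}$; establishing this carefully is the step I expect to be the main obstacle. The difficulty is that the $t$-integral is dominated by the range $t\sim |x|/2$, where the index $t-\tfrac N2$ of $K_{t-N/2}(|x|)$ is comparable to the argument $|x|$, so the elementary asymptotics $K_\nu(r)\sim\sqrt{\pi/(2r)}\,e^{-r}$ are not uniformly valid there; one must either substitute the uniform (large order, large argument) asymptotics of $K_\nu$ from Section~\ref{basic} and follow the saddle, or work instead with the subordinated (Gauss--Weierstrass) representation of the densities $q_t$ and apply Laplace's method to the resulting double integral, whose phase $s\mapsto -s-\tfrac{|x|^2}{4s}$ has its maximum at $s=|x|/2$. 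Once the saddle contribution is computed, collecting the constants yields the claim.

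Finally let $u=G\ast f$ with $f\in L^1(\R^N)$ and estimate $|u(x)|\le\int_{\R^N}G(x-y)|f(y)|\,dy$. On $\{|x-y|<1\}$ the first asymptotic gives $G(x-y)\le C|x-y|^{-N}$, and convolution against $f\in L^1$ keeps this contribution $O(|x|^{-N})$ as $|x|\to0$ and bounded for large $|x|$. On $\{|x-y|\ge1\}$ the second asymptotic gives $G(x-y)\le Ce^{-|x-y|}$; combined with $|x-y|\ge|x|-|y|$ and a further split of the $y$-integral at $|y|\le|x|/2$, this yields an exponentially decaying contribution for large $|x|$ and a bounded one for small $|x|$. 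Adding the two regions gives $u(x)=O(|x|^{-N})$ as $|x|\to0$ and $u(x)=O(e^{-|x|})$ as $|x|\to\infty$. The one delicate point here is the far-field split, where the elementary inequality $|x|-|x-y|\le|y|$ is what converts the pointwise exponential decay of $G$ into a decay estimate in $|x|$ modulo the $L^1$-mass of $f$.
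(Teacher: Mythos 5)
Your outline for the $|x|\to 0$ regime mirrors the paper's proof (split the $t$--integral of \eqref{Gree2} at $t=\tfrac N2$ and insert the small--argument Bessel expansion), and in fact you are more careful than the paper: you correctly note that on $(0,\tfrac N2)$ the factor $(|x|/2)^{2t-N}$ concentrates the mass of the $t$--integral at $t=0$, so that a Watson--type evaluation using $1/\Gamma(t)=t+O(t^2)$ is required, whereas the paper simply replaces $|x|^{2t-N}$ by $|x|^{-N}$, which is not legitimate for $t>0$. But your write--up then contains a genuine non sequitur: the Watson evaluation does \emph{not} produce $c_N|x|^{-N}$. Carrying it out gives
\[
\int_0^{N/2}\frac{\Gamma(N/2-t)}{\Gamma(t)}\Bigl(\tfrac{|x|}{2}\Bigr)^{2t}dt \;\sim\; \Gamma\!\bigl(\tfrac N2\bigr)\int_0^\infty t\,e^{-2t\log(2/|x|)}\,dt\;=\;\frac{\Gamma(N/2)}{4\log^2(2/|x|)},
\]
so what your own argument would establish is $G(x)\sim \dfrac{c_N\,|x|^{-N}}{4\log^2(1/|x|)}$, carrying an extra slowly varying factor. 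This logarithmic correction (present in the probabilistic references the paper cites) is not cosmetic: your near--field estimate $G(x-y)\le C|x-y|^{-N}$ on $\{|x-y|<1\}$ is not locally integrable, so convolving it against a generic $f\in L^1(\R^N)$ yields $+\infty$ rather than $O(|x|^{-N})$; it is precisely the $\log^{-2}$ factor that restores $G\in L^1_{\textnormal{loc}}$ and makes the near part of $G*f$ finite. Your proof of the Poisson--decay estimate therefore fails at this step.

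For the $|x|\to\infty$ regime you again spot a real weakness of the paper: the fixed--index asymptotics of $K_\nu$ are used as if uniform in $\nu=t-\tfrac N2$, but the mass of the $t$--integral sits at $t\asymp|x|/2$. However, the alternative you suggest --- Laplace's method in the subordination variable with phase $s\mapsto -s-\tfrac{|x|^2}{4s}$, saddle at $s=|x|/2$ --- is also off target. That saddle is correct for $q_t(x)$ at \emph{fixed} $t$, but when you then integrate in $t$ the Gamma factor $\int_0^\infty s^{t-1}/\Gamma(t)\,dt$ contributes $\sim s\,e^{s}$ for large $s$, which cancels the $e^{-s}$ you are treating as part of the phase. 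Performing the $t$--integral first yields the Gamma subordinator's potential density $U(s)=e^{-s}\int_0^\infty \frac{s^{t-1}}{\Gamma(t)}\,dt$, which tends to a positive constant as $s\to\infty$; writing $G(x)=\int_0^\infty p_s(x)\,U(s)\,ds$ with the Gaussian heat kernel $p_s$, for $|x|$ large the integral concentrates at $s\asymp|x|^2$ where $U\approx 1$, and one is left with the Newtonian potential, $G(x)\sim \frac{\Gamma(N/2-1)}{4\pi^{N/2}}|x|^{2-N}$, polynomial rather than exponential. So the far--field step you flagged as the main obstacle is more than an obstacle: the route you sketch (and, for different reasons, the paper's) would produce a decay rate inconsistent with the correct one. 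Before trying to patch the proof you should re--examine the target asymptotics themselves against the potential--kernel estimates for geometric stable processes in the references of Section \ref{intro}.
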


 The next task is  the study in weak sense with the source function $f\in L^2(\Omega)$,  the following related Dirichlet  elliptic   problem  in open bounded set $\Omega\subset\R^N$
 \begin{equation}\label{eq1-eigenvalue}
	\begin{split}
	\quad\left\{\begin{aligned}
		(I-\Delta)^{\log}u &= f && \text{ in $\Omega$}\\
		u &=0             && \text{ on }  \mathbb{R}^N\setminus \Omega.
	\end{aligned}\right.
	\end{split}
	\end{equation}
In  order to settle the corresponding functional analytic framework and energy space related to integro-differential  operator $\logrel$, we introduce the following space  
 \[
 \begin{split}
 H^{\log}(\R^N)&=\Big\{u\in L^2(\R^N): \quad\cE_{\g}(u,u)< \infty\Big\}
 \end{split}
 \]
  where with $J$ as in \eqref{kernel},  the bilinear form considered here is given by 
 \[
 \cE_{\g}(u,v):=\frac{1}{2}\int_{\R^N}\int_{\R^N}(u(x)-u(y))(v(x)-v(y))J(x-y)\ dxdy. 
 \] 
 We shall see in Section \ref{basic} that  $H^{\log}(\R^N)$ is a Hilbert space endowed with the scalar product
 \[
 (u,v)\to \langle u,v\rangle_{H^{\log}(\R^N)}= \langle u,v\rangle_{L^2(\R^N)}+\cE_{\g}(u,u),
 \]
 where $\langle u,v\rangle_{L^2(\R^N)}= \int_{\R^N}u(x)v(x)\ dx$ with corresponding norm
 \[
 \|u\|_{H^{\log}(\R^N)}=\left(\|u\|^2_{L^2(\R^N)}+ \cE_{\g}(u,u)\right)^{\frac{1}{2}}.
 \]
  Let $\Omega\subset \R^N$ be a bounded open set of $\R^N$. Here and the following we identify the space $L^2(\Omega )$ with the space of functions $u\in L^2(\R^N)$ with $u\equiv 0$ on $\R^N\setminus \Omega$. We denote by   $\H_0^{\log}(\Omega)$  the completion of $C^{\infty}_c(\Omega)$ with respect to the norm $\|\cdot\|_{H^{\log}(\R^N)}$. We have, by the Riesz representation theorem that problem  \eqref{eq1-eigenvalue}   admits a unique  weak solution  $u\in \H_0^{\log}(\Omega)$ with
  \[
  \cE_{\g}(u,v) = \int_{\Omega}f(x)v(x)\ dx\quad \text{ for  all}\ v\in \H_0^{\log}(\Omega).
  \]
Moreover, if   $f\in L^{\infty}(\Omega)$ and $\Omega$ satisfies a uniform exterior sphere condition,  it follows from the Green function representation   and the regularity estimates in \cite{RefMA,RefMimi,RefPAi}   that   $u\in C_0(\Omega):=\{u\in C(\R^N)\ :\ u=0\text{ on }\ \R^N\setminus \Omega\}$. 

%

We  aim next to study  the  eigenvalue problem in bounded domain $\Omega\subset \R^N$ involving the logarithmic Schr\"{o}dinger operator $(I-\Delta)^{\log}$, that is, we consider (\ref{eq1-eigenvalue}) with $f=\lambda u$.
To avoid a priori regularity assumption, we consider the eigenvalue  problem \eqref{eq1-eigenvalue} in weak sense.
We call a function $u\in \cH^{\log}_0(\Omega)$ an eigenfunction of \eqref{eq1-eigenvalue} corresponding to the eigenvalue  $\lambda$ if 
\begin{equation}\label{weak}
\cE_{\g}(u,\phi)= \lambda\int_{\Omega}u\phi\ dx\qquad \text{ for all } \ \phi\in \cC^{\infty}_c(\Omega).
\end{equation}
We then have the following characterisation of the eigenvalues and eigenfunctions for the operator  $(I-\Delta)^{\log}$ in an open bounded set $\Omega$ of $\R^N$.
\begin{thm}\label{eigenvalue}
Let $\Omega\subset \R^N$ be an open bounded set. Then 
\begin{itemize}
\item[(i)] Problem \eqref{eq1-eigenvalue} admits  an eigenvalue $\lambda_1(\Omega)>0$   characterized  by 
\begin{equation}\label{Lambda-1}
\lambda_{1}(\Omega)=\inf_{\substack{u\in \H_0^{\log}(\Omega)\\u\neq 0}}\frac{\cE_{\g}(u,u)}{\|u\|^2_{L^2(\Omega)}}= \inf_{u\in \cP_1(\Omega)}\cE_{\g}(u,u),
\end{equation}
with $\cP_1(\Omega):=\{u\in \H_0^{\log}(\Omega): \|u\|_{L^2(\Omega)}=1\}$ and there exists a positive function $\phi_1\in \H_0^{\log}(\Omega)$, which is an eigenfunction corresponding to $\lambda_1(\Omega)$ and  that attains the minimum in \eqref{Lambda-1}, i.e. $\|\phi_1\|_{L^2(\Omega)}=1$ and~
$
\lambda_{1}(\Omega)= \cE_{\g}(\phi_1,\phi_1).
$
\item[(ii)] The first eigenvalue $\lambda_1(\Omega)$ is simple, that is, if $u\in  \H_0^{\log}(\Omega)$ satisfies \eqref{weak}  with $\lambda=\lambda_1(\Omega)$,  then $u=\alpha\phi_1$ for some $\alpha\in \R$.
\item[(iii)] Problem \eqref{eq1-eigenvalue} admits a sequence of eigenvalues $\{\lambda_{k}(\Omega)\}_{k\in \N}$ with 
\[
0<\lambda_{1}(\Omega)<\lambda_2(\Omega)\le \cdots\le \lambda_k(\Omega)\le \lambda_{k+1}(\Omega)\cdots,
\]
with corresponding eigenfunctions $\phi_k$, $k\in \N$ and~ 
$
\lim_{k\to \infty}\lambda_{k}(\Omega) = +\infty.
$\\
Moreover, for any $k\in\N$, the  eigenvalue $\lambda_{k}(\Omega)$ can be characterized as 
\begin{equation}\label{Lambda-k}
\lambda_k(\Omega)= \inf_{u\in \cP_k(\Omega)}\cE_{\g}(u,u)
\end{equation}
where $\cP_k(\Omega)$ is given by
\[
\cP_k(\Omega):= \{ u\in \H_0^{\log}(\Omega): \int_{\Omega}u\phi_j\ dx  =0 \ \text{ for \ } j=1,2,\cdots k-1 \ \text{ and } \ \|\phi_k\|_{L^2(\Omega)}=1\}.
\]
\item[(iv)] The sequence $\{\phi_k\}_{k\in\N}$ of eigenfunctions corresponding to eigenvalues $\lambda_k(\Omega)$ form a complete orthonormal basis of $L^2(\Omega)$ and an orthogonal system of $\H_0^{\log}(\Omega)$. 
\end{itemize} 
\end{thm}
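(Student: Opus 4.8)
The plan is to run the classical variational / Krein--Rutman argument, whose only non-classical ingredient is the compactness of the embedding $\H_0^{\log}(\Omega)\hookrightarrow\hookrightarrow L^2(\Omega)$ for bounded $\Omega$, together with the associated Poincar\'e-type inequality $\|u\|_{L^2(\Omega)}^2\le C(\Omega)\,\cE_{\g}(u,u)$ on $\H_0^{\log}(\Omega)$; both are taken from Section~\ref{basic}. It is worth recalling why they hold: by Theorem~\ref{Result-1}(ii) and Plancherel, $\cE_{\g}(u,u)=\int_{\R^N}\log(1+|\xi|^2)|\cF(u)(\xi)|^2\,d\xi$ (for a suitable normalization of $\cF$), so for a bounded sequence $(u_n)$ in $\H_0^{\log}(\Omega)$ one splits $\|u_n-u_m\|_{L^2}^2$ into the regions $|\xi|\le R$ and $|\xi|>R$: on the second region it is bounded by $(\log(1+R^2))^{-1}\cE_{\g}(u_n-u_m,u_n-u_m)$, hence small uniformly in $n,m$ once $R$ is large, while on $|\xi|\le R$ the functions $\cF(u_n)$ are uniformly bounded and equicontinuous (all $u_n$ being supported in $\overline{\Omega}$), so a dominated-convergence / Arzel\`a--Ascoli argument finishes the job. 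In particular $\cE_{\g}(\cdot,\cdot)$ restricted to $\H_0^{\log}(\Omega)$ is a scalar product equivalent to $\langle\cdot,\cdot\rangle_{H^{\log}(\R^N)}$.

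For (i) I would apply the direct method: choose a minimizing sequence $(u_n)\subset\cP_1(\Omega)$ for the Rayleigh quotient; it is bounded in $\H_0^{\log}(\Omega)$, so up to a subsequence $u_n\rightharpoonup\phi_1$ in $\H_0^{\log}(\Omega)$ and $u_n\to\phi_1$ strongly in $L^2(\Omega)$ by the compact embedding, whence $\|\phi_1\|_{L^2(\Omega)}=1$ and, by weak lower semicontinuity of $\cE_{\g}$, $\cE_{\g}(\phi_1,\phi_1)=\lambda_1(\Omega)$. Here $\lambda_1(\Omega)>0$ because $\cE_{\g}(\phi_1,\phi_1)=0$ would force $\cF(\phi_1)$ to be supported in $\{0\}$, i.e. $\phi_1\equiv0$, contradicting $\|\phi_1\|_{L^2}=1$. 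Inserting $\phi_1+t\varphi$ with $\varphi\in C_c^\infty(\Omega)$ into the minimization and differentiating at $t=0$ gives the weak eigenvalue equation \eqref{weak} with $\lambda=\lambda_1(\Omega)$. Since $J\ge0$ and $(|a|-|b|)^2\le(a-b)^2$, one has $\cE_{\g}(|u|,|u|)\le\cE_{\g}(u,u)$, so $|\phi_1|$ is again a minimizer and we may take $\phi_1\ge0$; strict positivity $\phi_1>0$ in $\Omega$ then follows from the strong maximum principle for $\logrel$ (equivalently from the positivity of the Green function of $\Omega$, a consequence of $J>0$; see \cite{RefMA,RefPAi}).

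For (ii) the key observation is that \emph{every} minimizer has a fixed sign: if $\cE_{\g}(u,u)=\lambda_1(\Omega)\|u\|_{L^2}^2$, then $\lambda_1(\Omega)\|u\|_{L^2}^2\le\cE_{\g}(|u|,|u|)\le\cE_{\g}(u,u)=\lambda_1(\Omega)\|u\|_{L^2}^2$ forces $\cE_{\g}(|u|,|u|)=\cE_{\g}(u,u)$, and since $J>0$ a.e. this equality forces $u(x)u(y)\ge0$ for a.e. $(x,y)$, i.e. $u$ does not change sign; hence $\pm u$ is a nonnegative eigenfunction, which as in (i) is strictly positive in $\Omega$. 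Now if $u\in\H_0^{\log}(\Omega)$ satisfies \eqref{weak} with $\lambda=\lambda_1(\Omega)$, put $\psi:=u-\big(\int_\Omega u\phi_1\,dx\big)\phi_1$; then $\psi$ also lies in the $\lambda_1(\Omega)$-eigenspace and $\int_\Omega\psi\phi_1\,dx=0$. If $\psi\ne0$, then $\psi/\|\psi\|_{L^2}$ is a minimizer, hence of fixed sign, which together with $\int_\Omega\psi\phi_1\,dx=0$ and $\phi_1>0$ in $\Omega$ forces $\psi=0$ --- a contradiction; therefore $\psi=0$, i.e. $u=\alpha\phi_1$. For (iii), $\cE_{\g}$ with form domain $\H_0^{\log}(\Omega)$ is a densely defined, symmetric, closed and (by the Poincar\'e inequality) coercive bilinear form on $L^2(\Omega)$; its solution operator $T:f\mapsto u$ for \eqref{eq1-eigenvalue} is bounded from $L^2(\Omega)$ to $\H_0^{\log}(\Omega)$ and self-adjoint on $L^2(\Omega)$, hence compact on $L^2(\Omega)$ by the compact embedding. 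The spectral theorem for compact self-adjoint operators gives a nondecreasing sequence $\lambda_k(\Omega)\to+\infty$ with eigenfunctions $\phi_k$, the Courant--Fischer min-max principle yields the characterization \eqref{Lambda-k}, and the strict inequality $\lambda_1(\Omega)<\lambda_2(\Omega)$ is exactly the simplicity from (ii).

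Finally, (iv) is the standard statement that the eigenfunctions of a self-adjoint operator with compact resolvent form a complete orthonormal system in $L^2(\Omega)$; orthogonality in $\H_0^{\log}(\Omega)$ is immediate by taking $v=\phi_k$ in \eqref{weak} for $\phi_j$, which gives $\langle\phi_j,\phi_k\rangle_{H^{\log}(\R^N)}=\langle\phi_j,\phi_k\rangle_{L^2(\Omega)}+\cE_{\g}(\phi_j,\phi_k)=(1+\lambda_j(\Omega))\,\delta_{jk}$. The main obstacle is the compact embedding $\H_0^{\log}(\Omega)\hookrightarrow\hookrightarrow L^2(\Omega)$ and the Poincar\'e inequality: since $\log(1+|\xi|^2)$ grows only logarithmically this is the borderline situation in which something genuinely has to be checked rather than quoted from classical Sobolev theory. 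A secondary delicate point is the strong maximum principle used for the strict positivity of $\phi_1$, which rests on the positivity of the jump kernel $J$ (equivalently, of the Green function of $\Omega$).
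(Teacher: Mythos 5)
Your proposal is correct, and it reaches the same conclusions by a partly different route than the paper. Part (i) is essentially the paper's argument: direct minimization over $\cP_1(\Omega)$ using the compact embedding $\cH^{\log}_0(\Omega)\hookrightarrow L^2(\Omega)$ and weak lower semicontinuity, then $\cE_{\g}(|u|,|u|)\le \cE_{\g}(u,u)$ to fix the sign (the paper gets $\lambda_1>0$ from the Poincar\'e inequality of Lemma \ref{Properties}, which is even quicker than your Fourier-support remark, and your sketch of compactness via Fourier splitting is a legitimate alternative to the paper's route through the equivalence with the $\cH^0_0(\Omega)$-norm). Where you genuinely diverge is (ii): you observe that any eigenfunction for $\lambda_1$ is a minimizer (note this step silently uses the density of $C^\infty_c(\Omega)$ in $\cH^{\log}_0(\Omega)$ to test \eqref{weak} with $u$ itself), that equality in $\cE_{\g}(|u|,|u|)\le\cE_{\g}(u,u)$ together with $J>0$ forces $u(x)u(y)\ge 0$ a.e., hence fixed sign, and then orthogonalize against $\phi_1$; the paper instead takes $w=v-\alpha\phi_1$ with $\alpha=v(x_0)/\phi_1(x_0)$ and argues that $w$ vanishes at an interior point and so must change sign. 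Your version has the advantage of working with a.e.\ statements rather than pointwise values of $v$ (the paper's argument implicitly needs continuity of the competitor eigenfunction), but both arguments still hinge on the strict (a.e.) positivity of $\phi_1$ in $\Omega$, which you delegate to the strong maximum principle / Green function positivity and the paper to a pointwise identity; the two are at the same level of rigor on this point. For (iii)--(iv) you invoke the compact self-adjoint solution operator and the spectral theorem plus Courant--Fischer, whereas the paper constructs $\phi_k$, $\lambda_k$ inductively by constrained minimization, proves $\lambda_k\to\infty$ by the orthonormality/compactness contradiction, and proves completeness by the $\Psi(u)<\lambda_{k_0}$ contradiction; your route is shorter and standard (the successive-minimization characterization \eqref{Lambda-k} does follow from the spectral decomposition by expanding $u\in\cP_k(\Omega)$ in the eigenbasis), while the paper's is more self-contained and directly produces \eqref{Lambda-k} without appealing to operator-theoretic machinery. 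No gaps beyond the shared informality about strict positivity already present in the paper.
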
	
Using  the  $\delta$-decomposition technique introduced in \cite{RefPST}, we provide a boundedness  result of the eigenfunctions introduced in Theorem \ref{eigenvalue}.
\begin{prop}\label{Bound-eigenfunction}
Let $u\in\cH_0^{\log}(\Omega)$ and $\lambda>0$ satisfying \eqref{weak}. 
Then $u\in L^{\infty}(\Omega)$ and there exists a constant $C:=C(N,\Omega)>0$ such that
\[
\|u\|_{L^{\infty}(\Omega)}\le C\|u\|_{L^2(\Omega)}.
\]
\end{prop}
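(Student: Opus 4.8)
The plan is to use the $\delta$-decomposition of $\logrel$ introduced in \cite{RefPST} to trade the order-zero eigenvalue equation — for which no Sobolev embedding $\cH_0^{\log}(\Omega)\hookrightarrow L^p(\Omega)$ with $p>2$ is available, so that a direct Moser iteration fails — for a coercive equation with an $L^\infty$ right-hand side, and then to run a Stampacchia truncation argument in $\cH_0^{\log}(\Omega)$. Fix $\delta>0$ and split $J=J\mathbf 1_{B_\delta}+J_\delta$ with $J_\delta:=J\mathbf 1_{\R^N\setminus B_\delta}$. By the asymptotics \eqref{Asymptoti-o}, $J$ is comparable to $|z|^{-N}$ near the origin and decays exponentially at infinity, so $J_\delta\in L^1(\R^N)\cap L^2(\R^N)$; moreover $c_\delta:=\int_{\R^N\setminus B_\delta}J(z)\,dz$ is finite and $c_\delta\to+\infty$ as $\delta\to0^+$. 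Writing $\cE_{\g}^\delta$ for the bilinear form obtained by restricting the double integral defining $\cE_{\g}$ to $\{|x-y|<\delta\}$, and using that $J$ is even, one gets $\cE_{\g}(u,\phi)=\cE_{\g}^\delta(u,\phi)+c_\delta\int_{\R^N}u\phi\,dx-\int_{\R^N}(J_\delta\ast u)\phi\,dx$; since $\cE_{\g}(u,\cdot)$ and $\lambda\langle u,\cdot\rangle_{L^2}$ are bounded functionals on $\cH_0^{\log}(\Omega)$, the weak formulation \eqref{weak} extends from $C_c^\infty(\Omega)$ to all $\phi\in\cH_0^{\log}(\Omega)$ and becomes
\[
\cE_{\g}^\delta(u,\phi)+(c_\delta-\lambda)\int_\Omega u\phi\,dx=\int_\Omega(J_\delta\ast u)\,\phi\,dx .
\]

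Now $g:=J_\delta\ast u\in L^\infty(\R^N)$ with $\|g\|_{L^\infty}\le\|J_\delta\|_{L^2}\|u\|_{L^2(\Omega)}$ by Young's inequality, and I would fix $\delta$ small enough that $\mu:=c_\delta-\lambda\ge1$. For a level $A\ge\|g\|_{L^\infty}/\mu$, test the displayed identity with $\phi=(u-A)^+$; this is admissible because $\cE_{\g}$ is a Dirichlet form (normal contractions decrease the energy and preserve $\cH_0^{\log}(\Omega)$), and since $A>0$ and $u\equiv0$ on $\R^N\setminus\Omega$ the function $(u-A)^+$ is supported in $\Omega$. From the pointwise inequality $(a-b)\big((a-A)^+-(b-A)^+\big)\ge\big((a-A)^+-(b-A)^+\big)^2$ one obtains $\cE_{\g}^\delta(u,(u-A)^+)\ge\cE_{\g}^\delta((u-A)^+,(u-A)^+)\ge0$, whereas on $\{u>A\}$ one has $(\lambda-c_\delta)u+g=-\mu u+g\le-\mu A+\|g\|_{L^\infty}\le0$, so the right-hand side of the tested identity is nonpositive. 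Hence $\cE_{\g}^\delta((u-A)^+,(u-A)^+)=0$; since $J>0$ on $B_\delta\setminus\{0\}$ and $\R^N$ is connected, $(u-A)^+$ must be a.e.\ constant, hence $\equiv0$ as it lies in $L^2(\R^N)$. Thus $u\le A$ a.e.; applying the same argument to $-u$ (which solves the same equation with $g$ replaced by $-g$) yields $|u|\le A$ a.e. Taking $A=\|g\|_{L^\infty}/\mu$ gives $\|u\|_{L^\infty(\Omega)}\le\|J_\delta\|_{L^2(\R^N)}\|u\|_{L^2(\Omega)}$, which (with $\delta$, hence the constant, now fixed) is the asserted estimate.

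The routine but load-bearing ingredients are the functional-analytic facts established in Section \ref{basic}: that $\cE_{\g}^\delta$ is a well-defined nonnegative form on $\cH_0^{\log}(\Omega)$, that the space is stable under truncations $v\mapsto(v-A)^+$, and that $C_c^\infty(\Omega)$ is dense so that \eqref{weak} may be tested against $\cH_0^{\log}(\Omega)$-functions. The step I expect to be the main obstacle is making the decomposition rigorous at the level of the singular form — in particular justifying that the pointwise kernel inequality can be integrated against the only locally integrable kernel $J$, i.e.\ that $\cE_{\g}^\delta(u,(u-A)^+)\ge\cE_{\g}^\delta((u-A)^+,(u-A)^+)$ despite the near-diagonal singularity. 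Conceptually, however, everything hinges on a single simple point: in order zero one cannot bootstrap integrability, and the $\delta$-splitting sidesteps this by transferring the near-diagonal singularity of $\logrel$ into the coercive zeroth-order term $(c_\delta-\lambda)\int u\phi$ — which is possible exactly because $c_\delta\to\infty$ as $\delta\to0^+$ — leaving a genuinely bounded remainder $J_\delta\ast u$.
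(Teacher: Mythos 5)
Your proposal is correct and follows essentially the same route as the paper: the $\delta$-decomposition of the kernel from \cite{RefPST} (your $J_\delta$ is the paper's $K_\delta$), testing the resulting coercive identity with the truncation $(u-A)^+$, the same pointwise inequality for truncations, and the choice of $\delta$ so that $c_\delta-\lambda$ dominates, with the tail convolution bounded by $\|u\|_{L^2}$ via Young/Cauchy--Schwarz. The only cosmetic difference is the last step, where you deduce $(u-A)^+\equiv 0$ from the vanishing of the truncated energy by a constancy/connectedness argument, while the paper invokes a Poincar\'e-type inequality.
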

Our next result concerns the Faber-Krahn inequality for the logarithmic Schr\"odinger operator, which says: Among all open sets in $\R^N$ with given measure, ball uniquely gives the smallest first Dirichlet eigenvalue of  the logarithmic Schr\"odinger operator $(I-\Delta)^{\log}$. Here and in the following, we denote by $B^{\ast}$ the open ball in $\R^N$ centered at zero with radius determined such that  $|\Omega|=|B^{\ast}|$ 
\begin{thm}[Faber-Krahn inequality]\label{Faber-Krahn}
Let $\Omega\subset \R^N$ be open and bounded, and  $\lambda_{1,\log}(\Omega)$  be the principal eigenvalue of  $(I-\Delta)^{\log}$ in $\Omega$. Then
\begin{equation}\label{Faber}
\lambda_{1,\log}(\Omega)\ge \lambda_{1,\log}(B^{\ast}).
\end{equation}
Moreover, if equality occurs,  $\Omega$ is a ball.  Consequently, if $\Omega$ is a ball in $\R^N$, the first eigenfunction $\phi_{1,\log}$ corresponding  to $\lambda_{1,\log}(B)$ is radially symmetric.
\end{thm}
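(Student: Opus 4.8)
The plan is to run Schwarz (symmetric decreasing) rearrangement against the variational characterisation of the principal eigenvalue, the only substantial ingredient being that the Dirichlet energy does not increase under rearrangement. Concretely, the key claim is: for every nonnegative $u\in L^1(\R^N)\cap L^2(\R^N)$, with $u^{\ast}$ its symmetric decreasing rearrangement, one has $\cE_{\g}(u^{\ast},u^{\ast})\le\cE_{\g}(u,u)$. Granting this, take $u=\phi_1$, the positive principal eigenfunction from Theorem~\ref{eigenvalue}(i), normalised by $\|\phi_1\|_{L^2(\Omega)}=1$; since $\Omega$ is bounded, $\phi_1\in L^1(\R^N)\cap L^2(\R^N)$. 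By equimeasurability $\|\phi_1^{\ast}\|_{L^2(\R^N)}=1$, and since $\supp\phi_1\subset\Omega$ with $|\Omega|=|B^{\ast}|$ we get $\phi_1^{\ast}=0$ a.e.\ outside $B^{\ast}$; the claim gives $\cE_{\g}(\phi_1^{\ast},\phi_1^{\ast})<\infty$, so $\phi_1^{\ast}\in H^{\log}(\R^N)$, and a standard density argument (dilate $\phi_1^{\ast}$ slightly inward and mollify, using continuity of dilations in $H^{\log}(\R^N)$) yields $\phi_1^{\ast}\in\cH_0^{\log}(B^{\ast})$. Then by the characterisation \eqref{Lambda-1} applied to $B^{\ast}$,
\[
\lambda_{1,\log}(B^{\ast})\le\cE_{\g}(\phi_1^{\ast},\phi_1^{\ast})\le\cE_{\g}(\phi_1,\phi_1)=\lambda_{1,\log}(\Omega),
\]
which is \eqref{Faber}.

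To prove the rearrangement inequality I would subordinate the symbol to the Gaussian semigroup. Frullani's identity gives, for $r\ge 0$,
\[
\log(1+r)=\int_0^{\infty}\frac{e^{-t}-e^{-(1+r)t}}{t}\,dt=\int_0^{\infty}\frac{1-e^{-rt}}{t}\,e^{-t}\,dt .
\]
Inserting $r=|\xi|^2$ into the identity $\cE_{\g}(u,u)=\int_{\R^N}\log(1+|\xi|^2)\,|\cF u(\xi)|^2\,d\xi$ (from Theorem~\ref{Result-1}(ii) and Plancherel) and using Tonelli's theorem — every integrand is nonnegative — yields, for all $u\in L^2(\R^N)$,
\[
\cE_{\g}(u,u)=\int_0^{\infty}\frac{e^{-t}}{t}\Big(\|u\|_{L^2(\R^N)}^2-\langle k_t\ast u,u\rangle_{L^2(\R^N)}\Big)\,dt ,
\]
where $k_t\in L^1(\R^N)$ is the Gaussian defined by $\cF(k_t)(\xi)=e^{-t|\xi|^2}$, a strictly radially decreasing probability density. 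For nonnegative $u\in L^1\cap L^2$ the Riesz rearrangement inequality gives $\langle k_t\ast u,u\rangle_{L^2}=\iint u(x)k_t(x-y)u(y)\,dx\,dy\le\iint u^{\ast}(x)k_t(x-y)u^{\ast}(y)\,dx\,dy=\langle k_t\ast u^{\ast},u^{\ast}\rangle_{L^2}$; since $\|u^{\ast}\|_{L^2}=\|u\|_{L^2}$, substituting into the previous display proves $\cE_{\g}(u^{\ast},u^{\ast})\le\cE_{\g}(u,u)$.

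For the equality statement, assume $\lambda_{1,\log}(\Omega)=\lambda_{1,\log}(B^{\ast})$. Then all inequalities above are equalities; in particular $\cE_{\g}(\phi_1,\phi_1)=\cE_{\g}(\phi_1^{\ast},\phi_1^{\ast})$, so
\[
\int_0^{\infty}\frac{e^{-t}}{t}\Big(\langle k_t\ast\phi_1^{\ast},\phi_1^{\ast}\rangle_{L^2}-\langle k_t\ast\phi_1,\phi_1\rangle_{L^2}\Big)\,dt=0
\]
with nonnegative integrand, whence $\langle k_t\ast\phi_1,\phi_1\rangle_{L^2}=\langle k_t\ast\phi_1^{\ast},\phi_1^{\ast}\rangle_{L^2}$ for a.e.\ $t>0$. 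Fixing such a $t$, this is a case of equality in the Riesz rearrangement inequality with the strictly radially decreasing factor $k_t$; by the characterisation of equality (Burchard, Lieb), $\phi_1$ must be a translate of $\phi_1^{\ast}$, so $\{\phi_1>0\}$ equals, up to translation and a null set, the ball $\{\phi_1^{\ast}>0\}$. Since $\phi_1$ is the positive principal eigenfunction it is strictly positive a.e.\ in $\Omega$ — e.g.\ from strict positivity of the Lévy kernel $J$, hence of the Dirichlet transition density of $\Omega$ (cf.\ \cite{RefMA,RefPAi}) — so $|\{\phi_1>0\}|=|\Omega|=|B^{\ast}|$, and therefore $\Omega$ coincides, up to a null set, with a ball of measure $|B^{\ast}|$; being open, $\Omega$ is that ball. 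Finally, if $\Omega=B$ is already a ball then $B^{\ast}=B$ and, as above, $\phi_1^{\ast}\in\cH_0^{\log}(B)$ with $\cE_{\g}(\phi_1^{\ast},\phi_1^{\ast})\le\cE_{\g}(\phi_1,\phi_1)=\lambda_{1,\log}(B)$ and $\|\phi_1^{\ast}\|_{L^2}=1$, so $\phi_1^{\ast}$ is also a principal eigenfunction; by simplicity (Theorem~\ref{eigenvalue}(ii)) and positivity, $\phi_1=\phi_1^{\ast}$ is radially symmetric.

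Everything up to and including the rearrangement inequality is soft: the subordination formula reduces $\cE_{\g}$ to a superposition of Gaussian Dirichlet forms, to which the classical Riesz inequality applies verbatim. The main obstacle is the equality analysis — one needs the precise space identification guaranteeing $\phi_1^{\ast}\in\cH_0^{\log}(B^{\ast})$, and, more substantially, the sharp equality case of the Riesz inequality together with a.e.\ positivity of the principal eigenfunction to upgrade "$\phi_1$ is a translate of $\phi_1^{\ast}$'' to "$\Omega$ is a ball'' (rigorously, only up to a set of measure, in fact of logarithmic capacity, zero).
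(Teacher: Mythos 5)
Your proposal is correct and follows essentially the same route as the paper: a P\'olya--Szeg\H{o}-type inequality for $\cE_{\g}$ obtained by writing the quadratic form as a superposition of Gaussian forms and applying the Riesz rearrangement inequality, then the variational characterization \eqref{Lambda-1}, with the equality case handled through the sharp equality statement for rearrangements and simplicity/positivity of $\phi_{1}$. The only (cosmetic) difference is that you produce the Gaussian superposition on the Fourier side via Frullani's identity for $\log(1+|\xi|^2)$, while the paper subordinates the kernel $J$ itself through the Bessel integral representation \eqref{lim-omega}, and you invoke Lieb/Burchard for the equality case where the paper cites the corresponding lemma of Frank--Seiringer; your explicit attention to $\phi_1^{\ast}\in\cH_0^{\log}(B^{\ast})$ and to the null-set subtlety in concluding that $\Omega$ is a ball is, if anything, slightly more careful than the paper's own write-up.
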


Our last  result  concerns   small order asymptotics   $s\to 0^+$  of  eigenvalues and corresponding eigenfunctions  of  the relativistic Schr\"odinger  operator $(I-\Delta)^s$    on  bounded Lipschitz domain  $\Omega\subset\R^N$, which is an analogue, but    a part of the result  of the small order asymptotics $s\to 0^+$ proved in \cite{RefPST}  for the fractional Laplacian.
\begin{thm}\label{convergent-1}
Let $\Omega$ be a bounded Lipschitz domain in $\R^N$, and $\lambda_{k,s}(\Omega)$ resp. $\lambda_{k,\log}(\Omega)$  be  the $k$-th Dirichlet eigenvalue  of $(I-\Delta)^s$  resp. of $(I-\Delta)^{\log}$ on $\Omega$. Then for $s\in (0,1)$, the eigenvalue $\lambda_{k,s}(\Omega)$ satisfies the expansion
\begin{equation}
\label{asymptotic-exp}
\lambda_{k,s}(\Omega)=1 + s\lambda_{k,\log}(\Omega) + o(s) \quad \text{as }\quad s\to 0^+.
\end{equation}
Moreover, if  $(s_n)_n\subset (0,s_0)$, $s_0>0$ is a sequence with $s_n\to 0$ as $n\to \infty$, then if $\psi_{1,s}$ is the unique nonnegative $L^2$-normalized  eigenfunction of $(I-\Delta)^s$ corresponding to  the principal eigenvalue $\lambda_{1,s}(\Omega)$, we have that 
\begin{equation}
\label{convergent-eigen-k=1}
\psi_s\to \psi_{1,\log} \quad \text{in }\ L^2(\Omega)\quad \text{as }\quad s\to 0^+,
\end{equation}
and after passing to a subsequence, we have that 
\begin{equation}
\label{convergent-eigen-k}
\psi_{k,s}\to \psi_{k,\log} \quad \text{in }\ L^2(\Omega)\quad \text{as }\quad s\to 0^+,
\end{equation}
where $\psi_{1,\log}$,~  resp.  $\psi_{k,\log}$, $k\ge 2$ is  the unique nonnegative $L^2$-normalized  eigenfunction resp. a $L^2$-normalized  eigenfunction  corresponding to $\lambda_{1,\log}(\Omega)$ resp.  to $\lambda_{k,\log}(\Omega)$.
\end{thm}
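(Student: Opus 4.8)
The plan is to combine an abstract convergence principle for quadratic forms with the first-order expansion of the symbol $s\mapsto (1+|\xi|^2)^s$ at $s=0$. The starting observation is that for every $s\in(0,1)$ the eigenvalue problem for $(I-\Delta)^s$ on $\Omega$ can be recast, after subtracting the identity, in terms of the shifted operator $\frac{(I-\Delta)^s-I}{s}$, whose Fourier symbol is $\frac{(1+|\xi|^2)^s-1}{s}$; this symbol converges pointwise and monotonically (from below, by convexity of $s\mapsto a^s$ for $a\ge 1$) to $\log(1+|\xi|^2)$ as $s\to 0^+$. Thus the associated quadratic forms $\cE_s(u,u):=\langle\frac{(I-\Delta)^s-I}{s}u,u\rangle$, defined on the natural form domain, increase to $\cE_\g(u,u)$, and one has the form-domain inclusions $\cH_0^{\log}(\Omega)\subset \dom(\cE_s)$ for all small $s$. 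First I would set up these forms carefully on $L^2(\Omega)$ (using that functions in $\cH_0^{\log}(\Omega)$ extend by zero and have finite energy), verify the monotone convergence $\cE_s\uparrow\cE_\g$ on $\cH_0^{\log}(\Omega)$, and record that $\cH_0^{\log}(\Omega)$ embeds compactly into $L^2(\Omega)$ (this uses the logarithmic Sobolev-type embedding underlying Theorem \ref{eigenvalue}(iii); alternatively one can invoke the corresponding compact embedding for $H^s$ with a uniform-in-$s$ bound coming from $\log(1+|\xi|^2)\le \frac{(1+|\xi|^2)^s-1}{s}$ reversed—care is needed here on which direction the inequality goes, so I would phrase compactness directly via the limiting form $\cE_\g$).

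The core analytic input is then a $\Gamma$-convergence / Mosco-convergence statement for the forms $\cE_s$ to $\cE_\g$ as $s\to 0^+$: (a) the $\liminf$ inequality, $\cE_\g(u,u)\le\liminf_{s\to0}\cE_s(u_s,u_s)$ whenever $u_s\rightharpoonup u$ in $\cH_0^{\log}(\Omega)$, which follows from lower semicontinuity plus the monotonicity $\cE_s\le\cE_\g$ (so actually $\cE_s(u_s,u_s)\le \cE_\g(u_s,u_s)$ gives the wrong direction—instead I would use that for fixed $\sigma$, $\cE_\sigma\le\cE_s$ for $s\le\sigma$, pass $s\to0$ along $u_s\rightharpoonup u$ to get $\cE_\sigma(u,u)\le\liminf_s\cE_s(u_s,u_s)$ by weak lower semicontinuity of the fixed form $\cE_\sigma$, then let $\sigma\to0$ by monotone convergence); (b) the $\limsup$/recovery sequence, which here is trivial because one may take the constant sequence $u_s\equiv u$ and use $\cE_s(u,u)\uparrow\cE_\g(u,u)$. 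Standard spectral perturbation theory for forms with compact resolvent then yields that the eigenvalues $\mu_{k,s}:=\frac{\lambda_{k,s}(\Omega)-1}{s}$ of the shifted problem converge to $\lambda_{k,\log}(\Omega)$ for every $k$, and that $L^2$-normalized eigenfunctions converge in $L^2(\Omega)$ along subsequences, with no subsequence needed for $k=1$ by simplicity (Theorem \ref{eigenvalue}(ii)) and by the fact that the nonnegative normalized principal eigenfunction is unique, so every subsequential limit coincides. Unwinding $\mu_{k,s}=\frac{\lambda_{k,s}-1}{s}\to\lambda_{k,\log}$ is exactly the expansion \eqref{asymptotic-exp}.

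For the eigenfunction statements I would argue as follows. From the min-max characterization, bounded energy of the normalized eigenfunctions $\psi_{k,s}$ is uniform in $s$ small (because $\cE_s(\psi_{k,s},\psi_{k,s})=\mu_{k,s}\le \lambda_{k,\log}(\Omega)+1$ and $\cE_\g\ge\cdots$—again one must use the right monotonicity direction, namely $\cE_\sigma(\psi_{k,s},\psi_{k,s})\le\cE_s(\psi_{k,s},\psi_{k,s})$ for fixed small $\sigma$, giving a bound in the fixed space $\cH_0^{\log}(\Omega)$ after sending... so concretely I fix $\sigma=s_0$ and get $\cE_{s_0}(\psi_{k,s},\psi_{k,s})\le\mu_{k,s}$ bounded, hence a bound in $\cH_0^{s_0}(\Omega)=H^{s_0}_0(\Omega)$, which embeds compactly in $L^2(\Omega)$). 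Thus along any sequence $s_n\to0$ we extract $\psi_{k,s_n}\to\psi_k$ strongly in $L^2$; passing to the limit in the weak eigenvalue equation $\cE_{s_n}(\psi_{k,s_n},\phi)=\mu_{k,s_n}\langle\psi_{k,s_n},\phi\rangle$ for $\phi\in C_c^\infty(\Omega)$ — where the left side converges to $\cE_\g(\psi_k,\phi)$ because $\cE_{s}(\cdot,\phi)\to\cE_\g(\cdot,\phi)$ uniformly on energy-bounded sets (the test function $\phi$ being smooth and compactly supported makes the symbol difference $\frac{(1+|\xi|^2)^s-1}{s}-\log(1+|\xi|^2)$ integrate against $\widehat\phi$ harmlessly) — identifies $\psi_k$ as an $L^2$-normalized eigenfunction of $(I-\Delta)^{\log}$ for $\lambda_{k,\log}(\Omega)$; nonnegativity is preserved under $L^2$-convergence, and for $k=1$ uniqueness of the nonnegative normalized eigenfunction forces the full limit $\psi_{1,s}\to\psi_{1,\log}$ without passing to a subsequence. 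The main obstacle, and the point demanding the most care, is bookkeeping the direction of the monotonicity inequalities between the forms $\cE_s$ for different $s$ and between $\cE_s$ and $\cE_\g$ (all stemming from convexity of $t\mapsto a^t$), and correspondingly choosing in which fixed Hilbert space $H^{s_0}_0(\Omega)$ or $\cH_0^{\log}(\Omega)$ to extract compactness; once that is pinned down, the convergence of eigenvalues and eigenfunctions is a routine application of the spectral theory of monotonically convergent closed quadratic forms with compact resolvent.
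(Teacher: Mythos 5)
The monotonicity at the heart of your argument points the wrong way, and the two places where you make it concrete are exactly where the proof breaks. For $a\ge 1$ the map $s\mapsto \frac{a^s-1}{s}$ is nondecreasing on $(0,1)$ with limit $\log a$ as $s\to 0^+$; hence, writing $Q_s(u,u):=s^{-1}\bigl(\cE_{\g,s}(u,u)-\|u\|_{L^2(\R^N)}^2\bigr)=\int_{\R^N}\frac{(1+|\xi|^2)^s-1}{s}\,|\widehat u(\xi)|^2\,d\xi$ for your shifted form, one has $\cE_{\g}(u,u)\le Q_s(u,u)\le Q_{\sigma}(u,u)$ for $0<s\le\sigma$: these forms \emph{decrease} to $\cE_{\g}$ as $s\downarrow 0$, they do not ``increase to $\cE_\g$'', and your inequality ``$\cE_\sigma\le\cE_s$ for $s\le\sigma$'' is reversed. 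Consequently your concrete compactness step --- fixing $\sigma=s_0$ and bounding $Q_{s_0}(\psi_{k,s},\psi_{k,s})\le\mu_{k,s}$ to get a uniform bound in $\cH^{s_0}_0(\Omega)$ --- is invalid (and a uniform $H^{s_0}$-bound for eigenfunctions of $(I-\Delta)^s$ with $s\le s_0$ is not to be expected). The correct direction gives compactness for free and is exactly Lemma \ref{prop-b} of the paper: $\cE_\g(\psi_{k,s},\psi_{k,s})\le Q_s(\psi_{k,s},\psi_{k,s})=\frac{\lambda_{k,s}(\Omega)-1}{s}\le C$, where the last bound comes from testing the min--max \eqref{char-eigen} on a fixed $k$-dimensional subspace of $C^2_c(\Omega)$ via the elementary symbol estimates (Lemmas \ref{elementary-est} and \ref{upper-est-lambda_k}); this bounds $\{\psi_{k,s}\}$ in $\cH_0^{\log}(\Omega)$ and yields $L^2(\Omega)$-compactness by \eqref{emb}.

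The same reversal undoes the Mosco argument as you state it. For the $\limsup$/recovery part, the constant sequence works only for $u$ lying in some $\cH^{s}_0(\Omega)$: since $Q_s(u,u)\ge\cE_\g(u,u)$ and the convergence is monotone \emph{decreasing}, one needs finiteness of $Q_s(u,u)$ at some $s>0$, which is not available for a general $u\in\cH_0^{\log}(\Omega)$ (the log-space is strictly larger than the spaces $\cH^s_0(\Omega)$, and you do not show the union exhausts it); one must instead approximate by $C^2_c(\Omega)$ functions, which is precisely how the paper obtains $\limsup_{s\to0^+}\frac{\lambda_{k,s}(\Omega)-1}{s}\le\lambda_{k,\log}(\Omega)$. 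For the $\liminf$ part, your intermediate step via ``$\cE_\sigma\le\cE_s$'' is again the false direction; with the true one the inequality is immediate, $\cE_\g(u,u)\le\liminf_n\cE_\g(u_{s_n},u_{s_n})\le\liminf_n Q_{s_n}(u_{s_n},u_{s_n})$, by weak lower semicontinuity of the single form $\cE_\g$. (With the correct direction one even gets $\frac{\lambda_{k,s}(\Omega)-1}{s}\ge\lambda_{k,\log}(\Omega)$ directly from the min--max, since $\cH^s_0(\Omega)\subset\cH^{\log}_0(\Omega)$.) Apart from these directional errors, your skeleton --- uniform bound, $L^2$-convergence along subsequences, passage to the limit in the weak eigenvalue equation against $C^2_c(\Omega)$ test functions via Theorem \ref{Result-1}(i), the min--max applied to the span of the limiting orthonormal system, and uniqueness of the nonnegative normalized principal eigenfunction to remove subsequences when $k=1$ --- is essentially the paper's proof; but the compactness and recovery steps as written would fail, so the argument has a genuine gap.
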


\vspace{0.2cm}

The paper is organized as follows. In Section \ref{basic}, we provide the proof of Theorem \ref{Result-1} and establish some properties of  $(I-\Delta)^{\log}$ and functional spaces.  In Section \ref{Sec-eigenvalue-problem}, we prove Theorem \ref{eigenvalue}  and, using the $\delta$-decomposition tecnique  introduced in \cite{RefPST}, we give the proof of  Proposition \ref{Bound-eigenfunction} on the $L^{\infty}$-bound of eigenfunctions and close the section with the  proof of   Theorem \ref{Faber-Krahn} on  Faber-Krahn inequality. Section 
\ref{Sect-small} is dedicated to the proof of  Theorem \ref{convergent-1} on small order asymptotics $s\to 0^+$ of   the eigenvalues and corresponding eigenfunctions  of  $(I-\Delta)^s$.   In section \ref{Sect-Regularity}, we establish the proof of   Proposition \ref{Decay} concerning the  decay  of the solution of  Poisson problem in $\R^N$.  Finally,  Section \ref{Additional}  collects some theorems that can be directly deduced from known results in the literature. \\

\textbf{Notation}: ~We let $\omega_{N-1}= \frac{2\pi^{\frac{N}{2}}}{\Gamma(\frac{N}{2})} $ denote the measure of the unit sphere in $\R^N$ and,  for a set $A \subset\R^N$ and $x \in \R^N$, we define $\delta_A(x):=\dist(x,A^c)$ with $A^c=\R^N\setminus A$ and, if $A$ is measurable, then $|A|$ denotes its Lebesgue measure. Moreover, for given $r>0$, let $B_r(A):=\{x\in \R^N\;:\; \dist(x,A)<r\}$, and let $B_r(x):=B_r(\{x\})$ denote the ball of radius $r$ with $x$ as its center. If $x=0$ we also write $B_r$ instead of $B_r(0)$.
If $A$ is open, we denote by $C^k_c(A)$ the space of function $u:\R^N\to \R$ which are $k$-times continuously differentiable and with support compactly contained in $A$. If $f$ and $g$ are two functions,
then, $f\sim g$ as $x\to a$ if  $\frac{f(x)}{g(x)}$ converges to a constant as $x$ converges to $a$.\\

\textbf{Acknowledgements.}~ 
This work is supported by DAAD and BMBF (Germany) within the project 57385104. The author would like to thank     Mouhamed Moustapha Fall,  Sven Jarohs and Tobias Weth  for helpful 
discussions and comments.

\section{ Properties of the  operator and Functional spaces }\label{basic}
We commence this section with the establishment of the   integral representation of the operator $(I-\Delta)^{\log}$ for a function $u\in C^{\alpha}(\R^N)$,  that is, we provide the proof of Theorem \ref{Result-1}.  After that, we also provide some properties of the functional spaces related to $(I-\Delta)^{\log}$.  We first introduce the following asymptotics approximations 
 (see \cite{RefFDRC}) for the modified Bessel function  $K_{\nu}$. It well-known that 
\begin{equation}\label{M-Bessl-Asymtotic}
\begin{split}
K_{\nu}(r)&\sim
\begin{cases}
2^{|\nu|-1} \Gamma(|\nu|)r^{-|\nu|}, \quad r\to 0,\quad \nu\neq 0,\\
\log\frac{1}{r}, \qquad\qquad\qquad r\to 0,\quad \nu = 0,\\
 \sqrt{{\pi}/{2}}r^{-\frac{1}{2}}e^{-r}, \quad r\to +\infty,
 \end{cases}
\end{split}
\end{equation}
and the monotonicity (see \cite[10.37.1]{RefFDRC})
\begin{equation}\label{Monotonicity-Bessel}
|K_{\nu}(r)|< |K_{\mu}(r)|\quad\text{ for }\quad 0\le \nu< \mu.
\end{equation}
Consequently,
\begin{equation}\label{omega-Asymtotic}
\begin{split}
\omega_{s}(r)&\sim
\begin{cases}
 \Gamma(\frac{N+2s}{2}), \quad r\to 0,\quad \\
 2^{-\frac{N+2s-1}{2}}r^{\frac{N+2s-1}{2}}e^{-r}, \quad r\to +\infty.
 \end{cases}
\end{split}
\end{equation}
Note also that the functions $s\mapsto \omega_s$ and $s\mapsto d_{N,s}$ defined in \eqref{Integral-Def} are continuous  function of $s$ and we have that  $\displaystyle\lim_{s\to 0^+} d_{N,s}= 0$ and, by dominated convergent theorem,
\begin{equation}\label{lim-omega-p}
\begin{split}
\omega(|y|):=\lim_{s\to 0^+}\omega_{s}(|y|)&= 2^{1-\frac{N}{2}}|y|^{\frac{N}{2}}K_{\frac{N}{2}}(|y|)=\int_{0}^{\infty}t^{-1+\frac{N}{2}}e^{-t-\frac{|y|^2}{4t}}dt.
\end{split}
\end{equation}
 We now give the 
\begin{proof}[{Proof of Theorem} \ref{Result-1}]
  Let $u\in C^{\alpha}(\R^N)$ with   $0<s<\min\{\frac{\alpha}{2},\frac{1}{2}\}$. Then, from the definition of $(I-\Delta)^s$ in \eqref{Integral-Def}, the principal value can be dropped out and  we have the different quotient
 \begin{align*}
 \frac{(I-\Delta)^s u-u}{s}&=\frac{d_{N,s}}{s}\int_{\R^N}\frac{u(x+y)-u(x)}{|y|^{N+2s}}\omega_{s}(|y|)\ dy = A_{\epsilon}(s,x)+D_{\epsilon}(s,x),
 \end{align*}
 where $\epsilon>0$,   with $A_{\epsilon}(s,x)$ and $D_{\epsilon}(s,x)$ given respectively by
 \[
A_{\epsilon}(s,x):= \frac{d_{N,s}}{s}\int_{|y|<\epsilon}\frac{u(x+y)-u(x)}{|y|^{N+2s}}\omega_{s}(|y|)\ dy,
 \]
 \[
 D_{\epsilon}(s,x):= \frac{d_{N,s}}{s}\int_{|y|\ge \epsilon}\frac{u(x+y)-u(y)}{|y|^{N+2s}}\omega_{s}(|y|)\ dy.
 \]
 First, from  \eqref{Def_omega} and \eqref{lim-omega}   and  the fact that  $|y|^{-2s}\le \epsilon^{-2}$ for $|y|\ge \epsilon$ and $s\in (0,1)$, we have by dominated convergent theorem that
\[
 D_{\epsilon}(s,x)=\frac{d_{N,s}}{s}\int_{|y|\ge \epsilon}\frac{u(x+y)-u(x)}{|y|^{N+2s}}\omega_{s}(|y|)\ dy \to {D}_{\epsilon}(0,x)\quad \text{ as }\ s\to 0^+,
\]
with 
\[
\begin{split}
{D}_{\epsilon}(0,x)&:=d_N\int_{|x-y|\ge \epsilon}\frac{u(x)-u(y)}{|x-y|^{N}}\omega(|x-y|)\ dy=\int_{|x-y|\ge \epsilon}(u(x)-u(y))J(x-y)\ dy.
\end{split}
\]
Since next $u\in C^{\alpha}(\R^N)$, it also follows that 
\begin{align*}
A_{\epsilon}(s,x)=\frac{d_{N,s}}{s}\int_{|y|<\epsilon}\frac{u(x+y)-u(x)}{|y|^{N+2s}}\omega_{s}(|y|)\ dy\to A_{\epsilon}(0,x)\quad\text{ as} \quad s\to 0^+, 
\end{align*}
with
\begin{align*}
A_{\epsilon}(0,x)=d_N\int_{|y|<\epsilon}\frac{u(x)-u(x+y)}{|y|^{N}}\omega(|y|)\ dy=\int_{|x-y|< \epsilon}(u(x)-u(y))J(x-y)\ dy.
\end{align*}
We recall  that $\lim_{s\to 0}d_{N,s}/s= -d_N$. It is easy to see that $A_{\epsilon}(0,x)\to 0$ as $\epsilon\to 0^+$, and   from the the fact that  $u\in C^{\alpha}(\R^N)$, we also infer that 
\begin{align*}
\left|(I-\Delta)^{\log}u(x)-D_{\epsilon}(0,x)  \right|\le C\int_{|y|< \epsilon}\min\{1,|y|^{\alpha}\}\ dy\to 0\quad \text{ as}\ \epsilon\to 0^+.
\end{align*}
Since $u\in C^{\alpha}(\R^N)$, setting $\kappa_{N,s,u}=\Big|\frac{d_{N,s}}{s}\Big| \Gamma(({N+2s})/{2})\|u\|_{C^{\alpha}(\R^N)}\omega_{N-1}$ it follows from \eqref{omega-Asymtotic} that
\begin{align*}
|A_{\epsilon}(s,x)|&\le \Big|\frac{d_{N,s}}{s}\Big|\int_{|y|<\epsilon}\frac{\|u\|_{C^{\alpha}(\R^N)}}{|y|^{N+2s-\alpha}}\omega_{s}(|y|)\ dy\le\kappa_{N,s,u}\frac{\epsilon^{\alpha-2s}}{\alpha-2s}.
\end{align*}
Consequently,
\[
\|A_{\epsilon}(s,\cdot)\|_{L^p(B_{\epsilon})}\le \kappa_{N,s,u}\frac{\epsilon^{\frac{N}{p}+\alpha-2s}}{\alpha-2s} \quad \text{for $1\le p\le \infty$}.
\]
On the other hand,  using again  \eqref{omega-Asymtotic} with $s=0$, we infer that
\begin{align*}
|{D}_{\epsilon}(0,x)|&\le \int_{|x-y|\ge \epsilon}|u(x)-u(x+y)|J(y)\ dy\\
&\le 2\|u\|_{C^{\alpha}(\R^N)}\big(\int_{B_1\setminus B_{\epsilon}}|y|^{\alpha-N}dy+\int_{|y|\ge 1} e^{-|y|}\ dy\big)\\
&\le2\|u\|_{C^{\alpha}(\R^N)}\Big(2\frac{1-\epsilon^{\alpha}}{\alpha}+\omega_{N-1}\Gamma(N,1)\Big)
= \ C_{N,\epsilon} \|u\|_{C^{\alpha}(\R^N)}.
\end{align*}
Therefore,
\[
\|{D}_{\epsilon}(0,\cdot)\|_{L^{\infty}(\R^N\setminus B_{\epsilon})}<\infty.
\]
Next, by the Minkowski's  integral inequality,  we have 
\begin{align*}
&\|{D}_{\epsilon}(0,\cdot)\|_{L^p(\R^N\setminus B_{\epsilon})}\le \Big(\int_{\R^N\setminus B_{\epsilon}} \Big | \int_{|y|\ge \epsilon}(u(x)-u(x+y))J(y)\ dy \Big|^pdx \Big)^{\frac{1}{p}}\\
&\hspace{1cm}\le \int_{\R^N\setminus B_{\epsilon}} \Big( \int_{\R^N\setminus B_{\epsilon}}|u(x)-u(x+y)|^p\ dx \Big)^{\frac{1}{p}}J(y) \ dy\\
&\hspace{1cm}\le 2^{\frac{p-1}{p}}\|u\|_{L^p(\R^N\setminus B_{\epsilon})}\int_{\R^N\setminus B_{\epsilon}}J(y)\ dy<\infty.
\end{align*}
Therefore, we conclude that ${D}_{\epsilon}(0,\cdot)\in L^p(\R^N\setminus B_{\epsilon})$ $\text{ for  all}\quad 1\le  p\le  \infty$~ and thus
\begin{equation}\label{Uniform}
\|D_{\epsilon}(s,\cdot)-{D}_{\epsilon}(0,\cdot)\|_{L^p(\R^N\setminus B_{\epsilon})}\to 0 \quad \text{uniformly in $\epsilon$ \  as } s\to 0^+.
\end{equation}
This allows  to conclude  for $x\in \R^N$ that
\begin{equation}\label{Equality}
\lim_{\epsilon\to 0^+}D_{\epsilon}(0,x) =\lim_{\epsilon\to 0^+}\int_{|y|\ge \epsilon}(u(x)-u(x+y))J(y)\ dy=(I-\Delta)^{\log}u(x).
\end{equation}
Taking into account  the above facts, we find with $1\le p< \infty$ that
\begin{align*}
&\left\|\frac{(I-\Delta)^s u-u}{s}-(I-\Delta)^{\log}u\right\|_{L^p(\R^N)}=\left\|A_{\epsilon}(s,\cdot)+D_{\epsilon}(s,\cdot)-(I-\Delta)^{\log}u\right\|_{L^p(\R^N)}\\
&\hspace{3cm}\le\left\|A_{\epsilon}(s,\cdot)\right\|_{L^p(\R^N)}~+~\left\|D_{\epsilon}(s,\cdot)-(I-\Delta)^{\log}u\right\|_{L^p(\R^N)}\\
&\hspace{3cm}\le \kappa_{N,s,u}\frac{\epsilon^{\frac{N}{p}~+~\alpha-2s}}{\alpha-2s}+\left\|D_{\epsilon}(s,\cdot)-(I-\Delta)^{\log}u\right\|_{L^p(\R^N)}.
\end{align*}
Therefore,  using \eqref{Uniform} and \eqref{Equality} , we have for every $1\le p< \infty$ that
\[
\limsup_{s\to 0^+}\left\|\frac{(I-\Delta)^s u-u}{s}-(I-\Delta)^{\log}u\right\|_{L^p(\R^N)}\le \kappa_{N,u}\frac{\epsilon^{\frac{N}{p}+\alpha}}{\alpha}\quad\text{for every $\epsilon>0$},
\]
where  $\kappa_{N,u}$ is independent of $\epsilon$.~  The case $p=\infty$ follows by the same computation and
\[
\limsup_{s\to 0^+}\left\|\frac{(I-\Delta)^s u-u}{s}-(I-\Delta)^{\log}u\right\|_{L^{\infty}(\R^N)}\le \kappa_{N,u}\frac{\epsilon^{\alpha}}{\alpha}\quad\text{for every $\epsilon>0$}.
\]
 Moreover, it follows from the arbitrary of  $\epsilon $ that
\[
\lim_{s\to 0^+}\left\|\frac{(I-\Delta)^s u-u}{s}-(I-\Delta)^{\log}u\right\|_{L^p(\R^N)}=0 \quad\text{for every $1\le p\le \infty$}.
\]
This completes the of item $(i)$. The proof of 
item $(ii)$ is a particular case with $p=2$. Moreover,  using the continuity of the Fourier transform   in $L^2(\R^N)$, we have that
 \begin{align*}
 \cF((I-\Delta)^{\log}u) = \lim_{s\to 0^+}\frac{\cF((I-\Delta)^su)-\cF(u)}{s}&=\lim_{s\to 0^+}\left(\frac{(1+|\cdot|^2)^{s}-1}{s}\right)\cF(u)\\
 &=\log(1+|\cdot|^2)\cF(u)\quad\text{ in }\quad L^2(\R^N).
  \end{align*}
 We therefore infer that
 \[
  \cF((I-\Delta)^{\log}u)(\xi)= \log(1+|\cdot|^2)\cF(u)(\xi), \qquad \text{for almost every \;\;}\xi \in \R^N.
 \]
 The proof of Theorem \ref{Result-1} is henceforth completed.
 \end{proof}
 In the following,  we  let $L_0(\R^N)$ denotes the space 
\[
L_{0}(\R^N):= \Big\{ u: \ \R^N\to \R :\   \|u\|_{L_0(\R^N)} <\infty\Big\} \ \text{ with }\ \|u\|_{L_0(\R^N)}= \int_{\R^N}\frac{|u(x)|e^{-|x|}}{(1+|x|)^{\frac{N+1}{2}}}\ dx.
\] 
Let $U$ be a measurable subset and $u:U\to \R$ be a measurable function. The modulus of continuity of $u$ at a point $x\in U$ is defined by 
\[
\omega_{u,x,U}: (0, +\infty)\to [0,+\infty), \qquad \omega_{u,x,U}(r)=\sup_{\substack{y\in U,\ |x-y|\le r}}|u(x)-u(y)|.
\]
The function $u$ is called Dini continuous at $x$ if 
$$\int_{0}^1\frac{\omega_{u,x,U}(r)}{r}\ dr<\infty.$$ 
 Moreover, we call $u$ uniformly Dini continuous in $U$ for the uniform modulus of continuity 
\[
\omega_{u,U}(r):= \sup_{x\in U}\omega_{u,x,U}(r)\quad\text{ if }\qquad \int_{0}^1\frac{\omega_{u,U}(r)}{r}\ dr<\infty.
\]
 In the following proposition, we  list some  properties  the operator $(I-\Delta)^{\log}$.  
\begin{prop}\label{well-defined}
\begin{itemize}
\item[(i)]Let $u\in L_0(\R^N)\cap L^{\infty}(\R^N)$. If $u$ is locally  Dini continuous at some point $x\in \R^N$, then the  operator $(I-\Delta)^{\log}u$~ is well defined by  
\[
(I-\Delta)^{\log}u(x)=\int_{\R^N}(u(x)- u(y))J(x-y)\ dy.
\]
\item[(ii)] Let $\phi\in C_c^{\alpha}(\R^N)$ for some $\alpha>0$,  there is $C=C(N,\phi)$ such that 
\[
|(I-\Delta)^{\log}\phi(x)|\le C\|\phi\|_{C^{\alpha}(\R^N)}\frac{e^{-|x|}}{(1+|x|)^{\frac{N+1}{2}}}.
\] 
In particular, for $u\in L_0(\R^N)$,~ $\logrel u$~ defines a distribution via the map 
\[
\phi\mapsto\langle \logrel u,\phi\rangle=\int_{\R^N} u(I-\Delta)^{\log}\phi\ dx.
\]
\item[(iii)]Let  $u\in L_0(\R^N)$ and  $r>0$ such that $u\in C^{\alpha}(B_r(0))$ for some $\alpha>0$. Then there exists a constant $C:=C(N,\alpha)>0$ such that
\[
|(I-\Delta)^{\log}u(x)|\le C(\|u\|_{C^{\alpha}(B_{r(0)})}+\|u\|_{\cL_{0}(\R^N)})
\]
\item[(iv)] If $u\in C^{\beta}(\R^N)$ for some $\beta>0$, then $(I-\Delta)^{\log}u\in  C^{\beta-\epsilon}(\R^N)$ for every $\epsilon$ such that $0<\epsilon<\beta$  and there exists a constant $C:=C(N,\beta,\epsilon)>0$ such that
\[
[(I-\Delta)^{\log}u]_{\beta-\epsilon}\le C\|u\|_{C^{\beta}(\R^N)}.
\]
\item[(v)] Let $\phi,\psi\in\cC_c^{\infty}(\Omega)$. Then we have the product rule  
\begin{align*}
 (I-\Delta)^{\log}(\phi\psi)(x)&=\phi(x)(I-\Delta)^{\log}\psi(x) + \psi(x)(I-\Delta)^{\log}\phi(x)-\Lambda(\phi,\psi).
\end{align*}
with 
$$
\Lambda(\phi,\psi):=\int_{\R^N}(\phi(x)-\phi(y))(\psi(x)-\psi(y))J(x-y)\ dy
$$
If $\rho_{\epsilon}$, $\epsilon>0$ is a  family of  mollified, then
\[
[(I-\Delta)^{\log}(\rho_{\epsilon}\ast \phi)](x) = \rho_{\epsilon}\ast[ (I-\Delta)^{\log}\phi](x).
\] 
\end{itemize}
\end{prop}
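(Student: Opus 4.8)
\textbf{Proof proposal for Proposition \ref{well-defined}.}

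The plan is to treat the five items essentially independently, since each is a separate regularity/algebraic statement about $(I-\Delta)^{\log}$, but all of them will rely on the same two-scale splitting of the defining integral together with the asymptotics \eqref{Asymptoti-o} for the kernel $J$: near the origin $J(z)\sim \pi^{-N/2}\Gamma(N/2)|z|^{-N}$, so $J$ is locally like the kernel of $L_\Delta$ and the near-diagonal contribution is controlled by a Dini or H\"older modulus; at infinity $J(z)\sim c|z|^{-(N+1)/2}e^{-|z|}$, which is integrable and, crucially, decays exponentially, so the far contribution is controlled in the weighted space $L_0(\R^N)$ whose weight $e^{-|x|}(1+|x|)^{-(N+1)/2}$ is tailored precisely to $J$. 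For item (i), I would write $(I-\Delta)^{\log}u(x)=\int_{|y|<r}(u(x)-u(x+y))J(y)\,dy+\int_{|y|\ge r}(u(x)-u(x+y))J(y)\,dy$; the first integral converges absolutely because $\int_0^r \omega_{u,x}(\rho)\rho^{-1}\,d\rho<\infty$ by Dini continuity and $J(y)\lesssim |y|^{-N}$, while the second splits into $u(x)\int_{|y|\ge r}J(y)\,dy$ (finite since $J\in L^1$ away from $0$) minus $\int_{|y|\ge r}u(x+y)J(y)\,dy$, and the latter is finite because $|u(x+y)J(y)|\lesssim |u(x+y)|e^{-|y|}(1+|y|)^{-(N+1)/2}$ together with the translation $|y|\ge |x+y|-|x|$, which shows $u\in L_0$ plus $u\in L^\infty$ is exactly enough (one absorbs the factor $e^{|x|}$ coming from the shift, locally in $x$).

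For item (ii), the pointwise bound is the heart of the matter. For $\phi\in C_c^\alpha$ with $\supp\phi\subset B_R$, I would again split at scale $1$ (say): for $|y|<1$, $|\phi(x)-\phi(x+y)|\le \|\phi\|_{C^\alpha}\min\{1,|y|^\alpha\}$ and $J(y)\lesssim|y|^{-N}$ gives a bound $\lesssim\|\phi\|_{C^\alpha}$ uniform in $x$; for $|y|\ge 1$ one has $\phi(x)=0$ once $|x|>R+1$, so $(I-\Delta)^{\log}\phi(x)=-\int_{|y|\ge1}\phi(x+y)J(y)\,dy$ and $|\phi(x+y)|\ne 0$ forces $|x+y|<R$, i.e. $|y|>|x|-R$; then $J(y)\lesssim |y|^{-(N+1)/2}e^{-|y|}\lesssim (1+|x|)^{-(N+1)/2}e^{-(|x|-R)}$ on that region, and integrating the remaining $L^1$ mass of $\phi$ yields the claimed decay $\lesssim \|\phi\|_{C^\alpha}e^{-|x|}(1+|x|)^{-(N+1)/2}$ (the polynomial prefactor must be checked to survive, using that on $\{|y|>|x|-R\}$ one can pull out $(1+|y|)^{-(N+1)/2}\ge$ const$\cdot(1+|x|)^{-(N+1)/2}$ only after first peeling a fixed fraction of the exponential to dominate the slowly varying factor — this is the one place needing a little care). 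The distributional statement is then immediate: for $u\in L_0$ and $\phi\in C_c^\infty$, the pairing $\int_{\R^N}u\,(I-\Delta)^{\log}\phi\,dx$ converges absolutely by the just-proved bound, and linearity in $\phi$ is clear, so $(I-\Delta)^{\log}u$ is a well-defined distribution (one should also observe the self-adjointness $\int u\,(I-\Delta)^{\log}\phi=\int \phi\,(I-\Delta)^{\log}u$ when both sides make sense, by Fubini on the symmetric kernel $J(x-y)$).

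For items (iii) and (iv) I would localize: fix $x_0$, write $u=u\eta+u(1-\eta)$ with $\eta$ a cutoff equal to $1$ on $B_{r/2}(x_0)$ supported in $B_r(x_0)$; the piece $u\eta$ is globally $C^\alpha$ (resp. $C^\beta$) with norm $\lesssim \|u\|_{C^\alpha(B_r)}$, and for it one estimates $(I-\Delta)^{\log}(u\eta)(x_0)$ by the near-diagonal H\"older bound $\lesssim\|u\eta\|_{C^\alpha}$ exactly as in (ii) and, for (iv), the H\"older seminorm of $(I-\Delta)^{\log}w$ for $w\in C^\beta(\R^N)$ is controlled by the standard Schauder-type estimate for order-zero kernels — split at scale $|h|$ when estimating $(I-\Delta)^{\log}w(x)-(I-\Delta)^{\log}w(x+h)$, losing $\epsilon$ because $\int_{|y|<|h|}|y|^{\beta-N}dy\sim|h|^\beta$ and $\int_{|h|\le|y|<1}|y|^{-N}\min(|h|^\beta\dots)$ produces the $|h|^{\beta-\epsilon}$; the far piece $u(1-\eta)$ vanishes near $x_0$ so $(I-\Delta)^{\log}(u(1-\eta))(x_0)=-\int(u(1-\eta))(y)J(x_0-y)\,dy$ is bounded by $\|u\|_{L_0(\R^N)}$ directly via the kernel decay. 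Finally item (v): the product rule is the pure algebraic identity $(\phi\psi)(x)-(\phi\psi)(y)=\phi(x)(\psi(x)-\psi(y))+\psi(y)(\phi(x)-\phi(y))$, which after inserting $\psi(y)=\psi(x)-(\psi(x)-\psi(y))$ and integrating against $J(x-y)\,dy$ gives exactly $\phi(x)(I-\Delta)^{\log}\psi(x)+\psi(x)(I-\Delta)^{\log}\phi(x)-\Lambda(\phi,\psi)$, all three integrals converging absolutely by (ii) since $\phi,\psi\in C_c^\infty$; and the mollifier commutation $(I-\Delta)^{\log}(\rho_\epsilon*\phi)=\rho_\epsilon*((I-\Delta)^{\log}\phi)$ follows from Fubini (justified by the uniform-in-translate bound of (ii) and $\rho_\epsilon\in L^1$) together with translation invariance of the operator. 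The main obstacle is the tracking of the exact polynomial weight $(1+|x|)^{-(N+1)/2}$ in item (ii); everything else is a routine two-scale estimate once the kernel asymptotics \eqref{Asymptoti-o} and \eqref{M-Bessl-Asymtotic} are in hand.
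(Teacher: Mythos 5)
Your proposal is correct and takes essentially the same approach as the paper: the same two-scale splitting of the integral combined with the kernel asymptotics \eqref{Asymptoti-o} for items (i)--(iv), and the same algebraic identity plus Fubini's theorem for (v). The only notable differences are cosmetic: in (ii) you work with first differences and the support condition $|y|>|x|-R$, which in fact delivers the stated weight $e^{-|x|}(1+|x|)^{-\frac{N+1}{2}}$ directly (the paper's own computation via $|x-y|\ge \frac{1}{2}(1+|x|)$ only produces $e^{-|x|/2}$ before asserting the stated bound), and in (iii) you localize with a cutoff where the paper simply invokes (i) together with a second-difference estimate.
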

\begin{proof}
Let $x\in \R^N$. By splitting  the integral and using the asymptotic of $J$   in \eqref{Asymptoti-o}, we have the following,
\begin{align*}
&|(I-\Delta)^{\log}u(x)|\le \int_{B_1(x)}{|u(x)- u(y)|}J(x-y)\ dy +\int_{\R^N\setminus B_1(x)}(|u(x)|+|u(y)|)J(x-y)\ dy\\
&\le \Gamma(N/2)\omega_{N-1}\int_{0}^{1}\frac{\omega_{u,x}(r)}{r}\ dr
+C\|u\|_{L^{\infty}}\int_{\R^N\setminus B_1}e^{-|y|}\ dy+C\int_{\R^N\setminus B_1(x)}\frac{|u(y)|e^{-|x-y|}}{|x-y|^{\frac{N+1}{2}}}\ dy\\
&\le C(1+\|u\|_{L^{\infty}(\R^N)})+C\Big(\int_{B_{1+2|x|}(0)\setminus B_1(x)}+\int_{\R^N\setminus B_{1+2|x|}(0)}\Big)\frac{|u(y)|e^{-|x-y|}}{|x-y|^{\frac{N+1}{2}}}\ dy\\
&\le C(1+\|u\|_{L^{\infty}(\R^N)})+C\int_{B_{1+2|x|}(0)}|u(y)|\ dy+C\int_{\R^N\setminus B_{1+2|x|}(0)}\frac{|u(y)|e^{-|x-y|}}{|x-y|^{\frac{N+1}{2}}}\ dy.
\end{align*}
Now, since  $|x-y|\ge \frac{1}{2}(1+|y|)$ for $|y|\ge 1+2|x|$, it follows that
\begin{align*}
|(I-\Delta)^{\log}u(x)|\le  C(1+\|u\|_{L^{\infty}(\R^N)}+ \|u\|_{L_0(\R^N)})<\infty.
\end{align*}
This shows that $(I-\Delta)^{\log}u(x)$ is well-defined.\\
To prove $(ii)$, for $x\in \R^N$, we use again \ref{Asymptoti-o} and  the representation 
\[
(I-\Delta)^{\log}\phi(x)= \frac{ d_N}{2}\int_{\R^N}\frac{2\phi(x)-\phi(x+y)- \phi(x-y)}{|y|^N}\omega(|y|)\ dy.
\]
Put $A:= \|\phi\|_{C^{\alpha}(\R^N)}$. Note first that, since $\phi\in C_c^{\alpha}(\R^N)$, we have 
\[
|2\phi(x)-\phi(x+y)-\phi(x-y)|\le A\min\{1,|y|^{\alpha}\}.
\]
Therefore, for any $x\in \R^N$, we have with $0<r<1$ that  
\begin{align*}
|(I-\Delta)^{\log}\phi(x)|&\le\frac{ d_N}{2}\int_{\R^N}\frac{|2\phi(x)-\phi(x+y)- \phi(x-y)|}{|y|^N}\omega(|y|)\ dy\\
&\le A\int_{\R^N}\frac{\min\{1,|y|^{\alpha}\}}{|y|^N}\omega(|y|)\ dy \\
&\le  C_NA\Big(\int_{B_r}|y|^{\alpha-N}\ dy+\int_{B_1\setminus B_r}\frac{1}{|y|^N}\ dy+\int_{\R^N\setminus B_1}e^{-|y|}\ dr \Big)\\
&\le C(N,r,\alpha)A.
\end{align*} 
Next,  Let $R>0$ be such that   $B_{1}(\supp\ \phi)\subset B_{R}(0)$.  Let $x\in \R^N$ satisfying  $\frac{|x|}{2}>R$, then $1+|y|\le \frac{|x|}{2}$ for $y\in B_{1}(\supp\ \phi)$ and  $|x-y|\ge |x|-|y|\ge \frac{|x|}{2}+1\ge \frac{1}{2}(|x|+1)$.
Moreover, since $\phi(x)\equiv 0$ for $x\in \R^N\setminus B_R(0)$, it follows  that
\begin{align*}
|(I-\Delta)^{\log}\phi(x)|&\le  2d_NA\int_{\supp \ \phi}\frac{\omega(|x-y|)}{|x-y|^N}\ dy\le C_NA\int_{\supp \ \phi}\frac{e^{-|x-y|}}{|x-y|^{\frac{N+1}{2}}}\ dy\\
& \le C_NA\int_{\supp \ \phi}\frac{e^{-\frac{|x|}{2}}}{(1+|x|)^{\frac{N+1}{2}}}\ dy\le C_N|\supp\ \phi|A\frac{e^{-\frac{|x|}{2}}}{(1+|x|)^{\frac{N+1}{2}}}.
\end{align*}
Therefore, combining the above computations, we find that 
\[
|(I-\Delta)^{\log}\phi(x)|\le C_{N,\phi}A\frac{e^{-|x|}}{(1+|x|)^{\frac{N+1}{2}}}\qquad \text{ for all }   x\in \R^N.
\]
From the above computations, we have that $|\langle \logrel u,\phi\rangle|\le C_{N,\phi}\|\phi\|_{C^{\alpha}(\R^N)}\|u\|_{L_0(\R^N)}$ and if  the sequence $\{u_n\}_n$ converges to $u$ in $L_0(\R^N)$ as $n\to \infty$ then 
\[
|\langle \logrel u_n-\logrel u,\phi\rangle|\le C_{N,\phi}A\|u_n-u\|_{L_0(\R^N)}\to 0 \quad\text{ as } n\to \infty.
\]
Proof of $(iii)$. This follows from $(i)$ and  the inequality
\[
|2u(x)-u(x+y)-u(x-y)|\le \|u\|_{C^{\alpha}(B_r(0))}|y|^{\alpha}\qquad \text{ for } \ y\in B_{r/2}(0).
\]
Proof of $(iv)$. Let $0<r<1$ be small.  We have the following  estimate  of the difference,
\begin{align*}
|(I-\Delta)^{\log}u(x_1)-(I-\Delta)^{\log}u(x_2)|\le d_N(I_1+I_2)
\end{align*}
where $I_1$ and $I_2$ are given by
\begin{align*}
I_1& := \int_{B_r} \frac{|u(x_1)-u(x_1+y)|+|u(x_2)-u(x_2+y)|}{|y|^N}\omega(|y|)\ dy\\\
I_2& := \int_{\R^N\setminus B_r} \frac{|u(x_1)-u(x_2)|+|u(x_1+y)-u(x_2+y)|}{|y|^N}\omega(|y|)\ dy
\end{align*}
For $I_1$, we use the inequality $|u(x_1)-u(x_1+y)|\le\|u\|_{C^{\beta}(\R^N)} |y|^{\beta}$  to get
\[
I_1\le2\|u\|_{C^{\beta}(\R^N)}\int_{B_r}|y|^{\beta-N}\omega(|y|)\ dx\le  \frac{2\omega_{N-1}\Gamma(N/2)}{\beta}\|u\|_{C^{k}(\R^N)}r^{\beta}
\]
For $I_2$, we use $|u(x_1)-u(x_2)|+|u(x_1+y)-u(x_2+y)|\le 2\|u\|_{C^{\beta}(\R^N)}|x_1-x_2|^{\beta}$ and, 
\begin{align*} 
I_2&\le 2|x_1-x_2|^{\beta}\|u\|_{C^{\beta}(\R^N)}\Big(\int_{B_1\setminus B_r}\frac{\omega(|y|)}{|y|^{N}}\ dy + \int_{\R^N\setminus B_1}\frac{\omega(|y|)}{|y|^{N}}\ dy\Big)\\
&\le 2|x_1-x_2|^{\beta}\|u\|_{C^{\beta}(\R^N)}\Big(\Gamma(N/2)\int_{B_1\setminus B_r}\frac{1}{|y|^N}\ dy+ \int_{\R^N\setminus B_1}{\frac{e^{-|y|}}{|y|^{\frac{N+1}{2}}}}\ dy\Big)\\
&\le 2|x_1-x_2|^{\beta}\|u\|_{C^{\beta}(\R^N)}\omega_{N-1}\Big(\Gamma(N/2)\log \frac{1}{r} + \Gamma(N,1)\Big)\\
&\le 2|x_1-x_2|^{\beta}\omega_{N-1}\|u\|_{C^{\beta}(\R^N)}\Big(\frac{\Gamma(N/2)r^{-\epsilon}}{\epsilon} + \Gamma(N,1)\Big),
\end{align*}
where we have used the inequality $\log(\rho)\le \frac{\rho^{\epsilon}}{\epsilon}$ for $\epsilon>0$ and $\rho\ge 1$ (see \cite{RefSAT}). Therefore, taking $r=|x_1-x_2|$, we ends with
 \[
 |(I-\Delta)^{\log}u(x_1)-(I-\Delta)^{\log}u(x_2)|\le C(N,\beta,\epsilon)\|u\|_{C^{\beta}(\R^N)}|x_1-x_2|^{\beta-\epsilon}.
 \]
 Proof of $(v)$. This easily follows by integrating the following equality
 \[
 (\phi(x)\psi(x)-\phi(y)\psi(y)) = (\phi(x)-\phi(y))\psi(x)+(\psi(x)-\psi(y))\phi(x)-(\phi(x)-\phi(y))(\psi(x)-\psi(y)),
 \]
 while the second statement is an application of Fubini's theorem.
 This completes the proof of Proposition \ref{well-defined}.
\end{proof}

 We next list  some properties for functions belonging to the space  $H^{\log}(\R^N)$.
\begin{lemma}\label{Embedd}
The following assertions  hold true
\begin{itemize}
\item[1.] If $u\in H^{\log}(\R^N)$, then  $|u|,u^{\pm}\in H^{\log}(\R^N)$ with $\||u|\|_{H^{\log}(\R^N)},\|u^{\pm}\|_{H^{\log}(\R^N)}\le \|u\|_{H^{\log}(\R^N)}$.
\item[2.]  The space $\cC_c^{0,\alpha}(\R^N)\subset  H^{\log}(\R^N)$ for any $\alpha>0$.
\item[3.] If $\phi\in \cC_c^{0,\alpha}(\R^N)$  and $u\in H^{\log}(\R^N)$, then $\phi u\in H^{\log}(\R^N)$ and there a constant $C:=C(N,\phi)>0$ such that 
 \[
 \|\phi u\|^2_{H^{\log}(\R^N)}\le C\|u\|^2_{H^{\log}(\R^N)}\qquad 
 \]
\end{itemize}
\end{lemma}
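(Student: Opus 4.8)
The plan is to prove the three assertions of Lemma~\ref{Embedd} in order, using the Fourier characterization from Theorem~\ref{Result-1}(ii) together with elementary manipulations of the bilinear form $\cE_{\g}$ and the asymptotics of $J$ in \eqref{Asymptoti-o}.

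\textbf{Assertion 1.} The $L^2$ norm is obviously unchanged under $u\mapsto|u|$ or $u\mapsto u^{\pm}$, so it suffices to control $\cE_{\g}$. For $|u|$ the key pointwise inequality is $\bigl||u(x)|-|u(y)|\bigr|\le|u(x)-u(y)|$; since $J\ge0$, squaring and integrating gives $\cE_{\g}(|u|,|u|)\le\cE_{\g}(u,u)$ immediately, hence $|u|\in H^{\log}(\R^N)$ with the stated bound. For $u^{\pm}$ I would use $u^{+}=\tfrac12(|u|+u)$, $u^{-}=\tfrac12(|u|-u)$, or more directly the pointwise estimate $\bigl|u^{\pm}(x)-u^{\pm}(y)\bigr|\le|u(x)-u(y)|$, which holds case by case depending on the signs of $u(x),u(y)$; again positivity of $J$ finishes it.

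\textbf{Assertion 2.} Let $\phi\in\cC_c^{0,\alpha}(\R^N)$. Then $\phi\in L^2(\R^N)$ trivially, so I only need $\cE_{\g}(\phi,\phi)<\infty$. I split the double integral according to whether $|x-y|<1$ or $|x-y|\ge1$. On $|x-y|<1$ I use $|\phi(x)-\phi(y)|\le\|\phi\|_{C^\alpha}\min\{1,|x-y|^\alpha\}$ together with $J(z)\le C|z|^{-N}$ for $|z|$ small (from \eqref{Asymptoti-o}); integrating first in the difference variable $z=x-y$ over $B_1$ gives $\int_{B_1}|z|^{2\alpha-N}\,dz<\infty$, and integrating the $x$-variable over the (bounded) set where the integrand is nonzero—namely $x$ within distance $1$ of $\supp\phi$—keeps things finite. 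On $|x-y|\ge1$ I bound $|\phi(x)-\phi(y)|^2\le 2(|\phi(x)|^2+|\phi(y)|^2)$ and use the exponential decay $J(z)\le C e^{-|z|}$ for $|z|\ge1$; since $\phi$ is compactly supported, $\int\int(|\phi(x)|^2+|\phi(y)|^2)J(x-y)\,dx\,dy\le 2\|\phi\|_{L^1}\|\phi\|_{L^\infty}\int_{|z|\ge1}J(z)\,dz<\infty$. Adding the two pieces gives $\phi\in H^{\log}(\R^N)$.

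\textbf{Assertion 3.} For $\phi\in\cC_c^{0,\alpha}(\R^N)$ and $u\in H^{\log}(\R^N)$, the $L^2$ bound is clear: $\|\phi u\|_{L^2}\le\|\phi\|_{L^\infty}\|u\|_{L^2}$. For the energy I use the algebraic identity
\[
\phi(x)u(x)-\phi(y)u(y)=\phi(x)\bigl(u(x)-u(y)\bigr)+u(y)\bigl(\phi(x)-\phi(y)\bigr),
\]
so that, by $(a+b)^2\le2a^2+2b^2$ and positivity of $J$,
\[
\cE_{\g}(\phi u,\phi u)\le 2\|\phi\|_{L^\infty}^2\,\cE_{\g}(u,u)+\int_{\R^N}\int_{\R^N}u(y)^2\bigl(\phi(x)-\phi(y)\bigr)^2 J(x-y)\,dx\,dy.
\]
The first term is controlled by $\|u\|_{H^{\log}(\R^N)}^2$. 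For the second term I set $z=x-y$ and estimate the inner integral $g(y):=\int_{\R^N}\bigl(\phi(y+z)-\phi(y)\bigr)^2 J(z)\,dz$; splitting $|z|<1$ and $|z|\ge1$ exactly as in Assertion~2, and using the $C^\alpha$ bound on $\phi$ for small $z$ and compact support for large $z$, one shows $g\in L^\infty(\R^N)$ with $\|g\|_{L^\infty}\le C(N,\phi)$—in fact $g$ is bounded uniformly in $y$ since the small-$z$ part is a fixed finite integral and the large-$z$ part is at most $4\|\phi\|_{L^\infty}^2\int_{|z|\ge1}J(z)\,dz$. Hence the second term is $\le\|g\|_{L^\infty}\|u\|_{L^2}^2\le C(N,\phi)\|u\|_{H^{\log}(\R^N)}^2$, and combining gives $\|\phi u\|_{H^{\log}(\R^N)}^2\le C(N,\phi)\|u\|_{H^{\log}(\R^N)}^2$.

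The only mildly delicate point—and the one I would be most careful about—is the bookkeeping of the domains of integration in Assertions~2 and~3: the $x$-integral must be restricted using compact support of $\phi$ (or, in Assertion~3, hidden inside the bound on $g(y)$ which is genuinely uniform in $y$), and one must invoke \eqref{Asymptoti-o} to get both the integrable singularity $|z|^{-N}$ near $0$ (tamed by the $|z|^{2\alpha}$ from Hölder continuity) and the exponential tail at infinity. Everything else is a routine application of positivity of $J$, the triangle-type pointwise inequalities, and $(a+b)^2\le 2a^2+2b^2$.
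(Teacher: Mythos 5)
Your proposal is correct and follows essentially the same route as the paper: pointwise triangle-type inequalities plus positivity of $J$ for Assertion 1, and the near/far splitting of the double integral using the asymptotics \eqref{Asymptoti-o} (singularity $|z|^{-N}$ tamed by the H\"older exponent, exponential tail at infinity) for Assertions 2 and 3, together with the standard inequality $(a+b)^2\le 2a^2+2b^2$ for the product rule estimate. The only cosmetic differences are that for $u^{\pm}$ you invoke the $1$-Lipschitz property of $t\mapsto t^{\pm}$ while the paper expands $\cE_{\g}(u,u)=\cE_{\g}(u^{+},u^{+})+\cE_{\g}(u^{-},u^{-})-2\cE_{\g}(u^{+},u^{-})$ and uses the sign of the cross term, and that you split by $|x-y|<1$ versus $|x-y|\ge 1$ (and package item 3 via the uniformly bounded function $g$) instead of splitting over the support ball $B_r$; both versions are equally valid.
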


 \begin{proof}
 It straightforward to see by integrating  the inequality $$||u(x)|-|u(y)||\le |u(x)-u(y)|$$ that $\cE_{\g}(|u|,|u|)\le \cE_{\g}(u,u,)$ and  $\||u|\|_{H^{\log}(\R^N)}\le \|u\|_{H^{\log}(\R^N)}$. Using also the inequality 
 $$2(u^+(x)-u^+(y))(u^{-}(x)-u^{-}(y))=- 2(u^{-}(x)u^{+}(y)+u^{-}(y)u^{+}(x))\le 0\quad \text{ for } \ x,y\in \R^N,$$
  it follows  that
 \[
 \cE_{\g}(u,u)= \cE_{\g}(u^{+},u^{+})+\cE_{\g}(u^{-},u^{-}) -2\cE_{\g}(u^{+},u^{-})\ge \cE_{\g}(u^{+},u^{+})+\cE_{\g}(u^{-},u^{-}),
 \]
 proving clearly that  the first item holds.  Now, for the second item, we let $u\in \cC_c^{0,\alpha}(\R^N)$ be such that $\supp \ u \subset B_r$, $r>0$. without loss of generality we may assume that $0<r<1$ such that we can directly apply the asymptotics in \eqref{Asymptoti-o}. We therefore have
 \begin{align*}
  \cE_{\g}(u,u)&=\frac{1}{2}\int_{B_r}\int_{B_r}|u(x)-u(y)|^2J(x,y) \ dx dy   +\int_{B_r}u^2(x)\int_{\R^N\setminus B_r}J(x-y)\ dydx\\
  &\le C_1\int_{B_r}\int_{B_r}|x-y|^{2\alpha-N}   dx dy +C_2\int_{B_r}u^2(x)\big(\int_{B_1\setminus B_r}|x-y|^{-N}\ dy\\
  &\qquad\qquad\qquad+\int_{\R^N\setminus B_1}e^{-|x-y|} dy\big)dx\le C\frac{|B_r(0)|}{2\alpha}r^{-2\alpha}+C_3,
 \end{align*}
 where the constants $C:=C(N)>0$, $C_2:=C_2(r,N)>0$ and $C_3:=C_3(r,N)>0$. The second item is proved. We next prove item 3. Let $u\in H^{\log}(\R^N)$ and $\phi\in \cC_c^{0,\alpha}(\R^N)$ with $\supp \ \phi \subset B_r$, for $0<r<1$ . Then using the inequality $$|\phi(x)u(x)-\phi(y)u(y)|^2\le 2(|u(x)-u(y)|^2|\phi(x)|^2+|u(y)|^2|\phi(x)-\phi(y)|^2),$$ we get
 \begin{align*}
  \cE_{\g}(u,u)&\le \int_{B_r}\int_{B_r}|\phi(x)|^2|u(x)-u(y)|^2J(x,y)  dx dy \\
   &\hspace{2cm} +2\int_{B_r}u^2(x)\int_{ B_r}|\phi(x)-\phi(y)|^2J(x-y) dydx\\
  &\hspace{2cm} +C\int_{B_r}|\phi(x)u(x)|^{2}\big(\int_{B_1\setminus B_r}|x-y|^{-N}\ dy+\int_{\R^N\setminus B_1}e^{-|x-y|} dy\big)dx\\
  &\le 2\|\phi\|^2_{L^{\infty}(\R^N)} \cE_{\g}(u,u) +C_2\int_{B_r}u^2(x)\int_{ B_r}|x-y|^{2\alpha-N } dydx+C_3<\infty.
 \end{align*}
 Since $\|\phi u\|_{L^2(\R^N)}\le C_{\phi}\|u\|_{L^2(\R^N)}$, we have that $\phi u\in H^{\log}(\R^N)$ and item 3 is proved. 
 \end{proof}
We recall the  space $\cH^0_0(\Omega)$, corresponding to the analytical framework for the logarithmic  Laplacian $\loglap$ introduced in \cite{RefHT}, see also \cite{RefPST},  given by
\begin{equation}
\cH_0^0(\Omega)=\Big \{u\in L^2(\R^N):u\equiv0  \text{ on } \Omega^c \text{ and}\iint_{\substack{ x,y\in \R^N\\|x-y|<1}}\frac{|u(x)-u(y)|^2}{|x-y|^N}dxdy<\infty\Big\}.
\end{equation}
Here $\Omega^c=\R^N\setminus\Omega$, and the map 
\[
(u,v)\mapsto\langle u,v\rangle_{\cH^0_0(\Omega)}:=\frac{C_N}{2}\iint_{\substack{ x,y\in \R^N\\|x-y|<1}}\frac{(u(x)-u(y))(v(x)-v(y))}{|x-y|^N}\ dxdy,
\]
is a scalar product on $\cH^0_0(\Omega)$. The space  $\cH^0_0(\Omega)$ is a Hilbert space with induced norm $\|\cdot\|_{ \cH^0_0(\Omega)}=\langle\cdot,\cdot\rangle^{\frac{1}{2}}_{\cH_0^0(\Omega)}$. Moreover,  The space $C_c^{2}(\Omega)$ is dense in  $\cH^0_0(\Omega)$ and 
\begin{equation*}\label{emb1}
\text{the embedding \  $\cH^0_0(\Omega) \hookrightarrow {L^2(\Omega)}$ \ is compact}.
\end{equation*}

We have the following Lemma

\begin{lemma}
\label{Properties}
\begin{itemize}
\item[(i)] the space $H^{\log}(\R^N)$ is a Hilbert space and,  $H^{m}(\R^N)\subset H^{\log}(\R^N)$ for all $m>0$.
\item[(ii)] If $\Omega\subset \R^N$ is an open set  with  finite measure then we have the following  Poincar\'e inequality \text{ with } \  $C:=C(N,\Omega)$
 \begin{equation}
\|u\|^2_{L^2(\Omega)}\le C \int_{\R^N}\int_{\R^N}|u(x)-u(y)|^2J(x-y)  dx dy, \qquad u\in \cH_0^{\log}(\Omega) 
\end{equation}
\item[(iii)] If $\Omega\subset \R^N$ is bounded, then there a constant $ C_j:=C(N,\Omega)$, $j=1,2$ such that 
 $$C_1\cE_{\g}(u,u)\le\|u\|^{2}_{\H_0^{0}(\Omega)}\le C_2\cE_{\g}(u,u)$$
 \item[(iv)]  The space $\cC^{\infty}_c(\Omega)$ is dense in $\H_0^{\log}(\Omega)$ and 
 \begin{equation}\label{emb}
\text{the embedding \  $\cH^{\log}_0(\Omega) \hookrightarrow {L^2(\Omega)}$ \ is compact}.
\end{equation}
 \end{itemize}
\end{lemma}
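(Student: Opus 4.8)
The plan is to prove the four items essentially in order, leveraging the comparison with the logarithmic Laplacian space $\cH_0^0(\Omega)$ from \cite{RefHT,RefPST} and the Fourier characterization of Theorem \ref{Result-1}(ii). First, for item (i), I would show $H^{\log}(\R^N)$ is complete using the Fourier/Plancherel picture: for $u \in L^2(\R^N)$ one has $\cE_\g(u,u) = \tfrac12 \int_{\R^N} \log(1+|\xi|^2)|\cF u(\xi)|^2\,d\xi$ (this identity follows from polarization of Theorem \ref{Result-1}(ii) together with Parseval, after noting that $\cE_\g(u,v) = \langle (I-\Delta)^{\log}u, v\rangle$ for sufficiently regular $u$, then extending by density), so that $\|u\|_{H^{\log}}^2 = \int_{\R^N}(1 + \log(1+|\xi|^2))|\cF u(\xi)|^2\,d\xi$. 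Completeness is then inherited from the completeness of the weighted $L^2$ space $L^2\big(\R^N,(1+\log(1+|\xi|^2))\,d\xi\big)$ via the isometry $\cF$. The embedding $H^m(\R^N) \subset H^{\log}(\R^N)$ for $m>0$ is immediate from the pointwise bound $1 + \log(1+|\xi|^2) \le C_m(1+|\xi|^2)^m$ for all $\xi$, valid since the logarithm grows slower than any positive power.

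For item (iii), which I expect is the technical heart, the strategy is a two-sided comparison of the kernels $J(z)$ and $c_N 1_{\{|z|<1\}}|z|^{-N}$. Using the asymptotics \eqref{Asymptoti-o}, there are constants $0 < c \le C$ with $J(z) \ge c\,|z|^{-N}$ for $|z| < 1$, which immediately gives $\cE_\g(u,u) \ge c'\iint_{|x-y|<1}\tfrac{|u(x)-u(y)|^2}{|x-y|^N}\,dxdy = c''\|u\|_{\cH_0^0(\Omega)}^2$, one of the two inequalities. For the reverse, write $\cE_\g(u,u) = \tfrac12\iint_{|x-y|<1}(\cdots) + \tfrac12\iint_{|x-y|\ge 1}(\cdots)$; the near-diagonal part is bounded by $C\|u\|_{\cH_0^0(\Omega)}^2$ again by \eqref{Asymptoti-o}, and the far part is bounded by $\iint_{|x-y|\ge 1}(|u(x)|^2+|u(y)|^2)J(x-y)\,dxdy \le 2\|u\|_{L^2(\R^N)}^2 \int_{|z|\ge 1}J(z)\,dz$, a finite multiple of $\|u\|_{L^2}^2$ since $J$ decays exponentially; then $\|u\|_{L^2(\Omega)}^2 \le C\|u\|_{\cH_0^0(\Omega)}^2$ by the Poincaré inequality on $\cH_0^0(\Omega)$ available from \cite{RefHT} (valid since $\Omega$ is bounded, hence of finite measure). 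Chaining these gives $\cE_\g(u,u) \le C\|u\|_{\cH_0^0(\Omega)}^2$. The main obstacle here is making the constants uniform and correctly tracking the restriction $u \equiv 0$ on $\Omega^c$, which is what makes the $L^2$-bound by the seminorm legitimate.

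Item (ii), the Poincaré inequality on $\cH_0^{\log}(\Omega)$, then follows by combining item (iii) with the Poincaré inequality on $\cH_0^0(\Omega)$: $\|u\|_{L^2(\Omega)}^2 \le C\|u\|_{\cH_0^0(\Omega)}^2 \le C'\cE_\g(u,u)$; alternatively it can be proven directly by the same kernel comparison $J(z) \ge c\,1_{\{|z|<1\}}|z|^{-N}$ together with a Fourier argument (using that $\log(1+|\xi|^2)$ is bounded below away from a neighborhood of the origin, combined with a spectral-gap/compactness argument exploiting $|\Omega| < \infty$). Finally, for item (iv): density of $C_c^\infty(\Omega)$ in $\cH_0^{\log}(\Omega)$ holds by definition of $\cH_0^{\log}(\Omega)$ as the $\|\cdot\|_{H^{\log}}$-completion of $C_c^\infty(\Omega)$ — one should just check $\cH_0^{\log}(\Omega) \subset \{u \in L^2: u\equiv 0 \text{ on } \Omega^c\}$ so the space is well-defined as a subspace of $L^2$, which follows from item (iii) together with the corresponding property of $\cH_0^0(\Omega)$. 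The compactness of $\cH_0^{\log}(\Omega) \hookrightarrow L^2(\Omega)$ then transfers directly from the known compactness of $\cH_0^0(\Omega) \hookrightarrow L^2(\Omega)$: by item (iii) the identity map $\cH_0^{\log}(\Omega) \to \cH_0^0(\Omega)$ is bounded, and composing with the compact embedding $\cH_0^0(\Omega) \hookrightarrow L^2(\Omega)$ yields a compact map, which agrees with the embedding $\cH_0^{\log}(\Omega) \hookrightarrow L^2(\Omega)$.
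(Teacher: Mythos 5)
Your items (i), (iii) and (iv) are essentially correct and close to the paper's own argument: in (i) the inclusion $H^m(\R^N)\subset H^{\log}(\R^N)$ is obtained exactly as in the paper from $\log(1+|\xi|^2)\le \frac1m(1+|\xi|^2)^m$, and your completeness proof via the Fourier isometry onto the weighted space $L^2\big((1+\log(1+|\xi|^2))\,d\xi\big)$ is a legitimate replacement for the paper's Fatou-lemma argument (only note that with the Plancherel normalization $\cE_{\g}(u,u)=\int_{\R^N}\log(1+|\xi|^2)|\cF u(\xi)|^2\,d\xi$, without your extra factor $\tfrac12$); in (iii) you use the same two-sided comparison of $J$ with $1_{\{|z|<1\}}|z|^{-N}$ via \eqref{Asymptoti-o} together with the Poincar\'e inequality for $\cH_0^0(\Omega)$, and absorbing the pairs $x\in\Omega$, $y\in\Omega^c$, $|x-y|<1$ directly into the full $\cH_0^0$-seminorm is, if anything, cleaner than the paper's splitting; (iv) coincides with the paper (density is built into the definition of the completion, and compactness follows by factoring the embedding through $\cH_0^0(\Omega)\hookrightarrow L^2(\Omega)$ using (iii)).

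The genuine gap is item (ii). The statement is for an arbitrary open set $\Omega$ of finite measure, whereas your main derivation routes through item (iii) and the $\cH_0^0(\Omega)$-Poincar\'e inequality, both of which you have only for bounded $\Omega$; so as written you prove (ii) only in the bounded case. Your fallback ``direct'' argument (that $\log(1+|\xi|^2)$ is bounded below away from the origin, plus a ``spectral-gap/compactness argument exploiting $|\Omega|<\infty$'') is not a proof: no compactness is available for unbounded $\Omega$ at this stage, and you never say how the finiteness of $|\Omega|$ controls the low-frequency part of $\|u\|_{L^2}^2$, which is the whole point. The paper closes exactly this step with a short self-contained Fourier argument: since $u\equiv 0$ outside $\Omega$, H\"older gives $|\cF u(\xi)|^2\le (2\pi)^{-N}|\Omega|\,\|u\|^2_{L^2(\Omega)}$ for every $\xi$, hence for every $R>0$
\[
\|u\|^2_{L^2(\Omega)}\le (2\pi)^{-N}R^N|B_1||\Omega|\,\|u\|^2_{L^2(\Omega)}+\frac{1}{\log(1+R^2)}\,\cE_{\g}(u,u),
\]
and choosing $R$ small enough that $(2\pi)^{-N}R^N|B_1||\Omega|<1$ lets one absorb the first term and obtain the inequality for every finite-measure $\Omega$ (first for $u\in C^\infty_c(\Omega)$, then by density). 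You need this, or some equally quantitative use of $|\Omega|<\infty$, to establish (ii) in the stated generality; with that amendment the rest of your proposal stands.
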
	
\begin{proof}
Let $\{u_n\}_n\subset H^{\log}(\R^N)$ be a Cauchy sequence. Then  $\{u_n\}_n$ is in particular a Cauchy sequence in $L^2(\R^N) $ and
hence there exists a  $u\in L^2(\R^N)$ such  that $u_n\to u$ as $n\to \infty$. Passing to a subsequence we get that $u_n\to u$ a.e in $\R^N$ as $n\to \infty$ and by Fatou Lemma we have
\[
\cE_{\g}(u,u)\le \liminf_{n\to \infty}\cE_{\g}(u_n,u_n)\le \sup_{n\in \N}\cE_{\g}(u_n,u_n)<\infty,
\]
showing that $u\in H^{\log}(\R^N)$. Apply once more Fatou Lemma it follows that
\begin{align*}
\|u_n-u\|^2_{H^{\log}(\R^N)}&=\|u_n-u\|^2_{L^2(\R^N)}+ \cE_{\g}(u_n-u,u_n-u)\le  \liminf_{n\to \infty}\|u_n-u_m\|^2_{H^{\log}(\R^N)},
\end{align*}
 for $ n,m\in \N$. The claim follows since $\{u_n\}_n$ is a Cauchy sequence in $H^{\log}(\R^N)$. 

By Plancherel thereon the norm in $H^{\log}(\R^N)$ is also given via Fourier representation
 \[
 \|u\|_{H^{\log}(\R^N)}= \Big(\|u\|^2_{L^2(\R^N)}+\int_{\R^N}\log(1+|\xi|^2)|\cF(u)(\xi)|^2\ d\xi\Big)^{\frac{1}{2}}.
 \]
 Threfore, using the standard inequality $\log\rho\le \frac{\rho^m}{m}$ for $\rho\ge 1$ for $m>0$ (see  e.g.\cite{RefSAT})  one see that the space $H^{\log}(\R^N)$ is larger than any  Sobolev space $H^m(\R^N):= W^{m,2}(\R^N)$. In fact if $u\in H^{m}(\R^N)$ then the proof of $(i)$ is completed by the following inequality,
 \begin{equation}\label{classical}
 \begin{split}
 \|u\|^2_{H^{\log}(\R^N)}&=\|u\|^2_{L^2(\R^N)}+ \int_{\R^N}\log(1+|\xi|^2)|\cF(u)(\xi)|^2 d\xi\\
 &\le \|u\|^2_{L^2(\R^N)}+ \frac{1}{m}\int_{\R^N}(1+|\xi|^2)^m|\cF(u)(\xi)|^2 d\xi\le C_m\|u\|^2_{H^{m}(\R^N)}.
 \end{split}
 \end{equation}
 The Poincar\'e inequality in $(ii)$ follows from \cite[Lemma 2.9]{RefMMP}  and \cite{RefST} if $\Omega$ is bounded or bounded in one direction. We provide the proof here for $\Omega\subset \R^N$ with $|\Omega|<\infty$.   Since $u= 0$ in $\R^N\setminus \Omega$,  we first have by  H\"older  inequality that 
\[
|\hat u(\xi)|^2\le (2\pi)^{-N}|\Omega|\|u\|^2_{L^2(\Omega)}\qquad\text{ for every }\quad \xi\in \R^N.
\]
Next, by Plancherel theorem  and every $R>0$,  we get
 \begin{align*}
\|u\|^2_{L^2(\Omega)}& = \int_{\R^N}|\hat u(\xi)|^2\ d\xi 
= \int_{|\xi|<R}|\hat u(\xi)|^2\ d\xi+ \int_{|\xi|\ge R}\log(1+|\xi|^2)|\hat u(\xi)|^2\log(1+|\xi|^2)^{-1}\ d\xi\\
&\le (2\pi)^{-N}R^{N}|\Omega||B_1(0)| \|u\|^2_{L^2(\R^N)}+\frac{1}{2\log(1+R^{2})}\int_{\R^N}\int_{\R^N}(u(x)-u(y))^2J(x-y)\ dxdy.
\end{align*}
Therefore, choosing $R<2\pi (|\Omega||B_1(0)|)^{-\frac{1}{N}}=  2\pi\Big(\frac{N}{\omega_{N-1}|\Omega|}\Big)^{\frac{1}{N}}$ we find that 
\begin{align*}
\|u\|^2_{L^2(\Omega)}&\le \frac{2}{\log(1+R^2)\Big(1- (2\pi)^{-N}R^{N}|\Omega||B_1(0)|\Big)}\int_{\R^N}\int_{\R^N}(u(x)-u(y))^2J(x-y)\ dxdy.
\end{align*}
The  proof of $(ii)$ follows here by minimizing in $R$ the coefficient in the right hand side.

 For item  $(iii)$, we use  the asymptotics in \eqref{Asymptoti-o}  to get
\begin{align*}
 \|u\|^2_{\cH_0^0(\Omega)}= \frac{1}{2}\iint_{\substack{ x,y\in\R^N\\|x-y|<1}}\frac{|u(x)-u(y)|^2}{|x-y|^N}\ dxdy &\le C_1 \int_{\R^N}\int_{\R^N}\frac{|u(x)-u(y)|^2}{|x-y|^N}\omega(|x-y|)\ dxdy.
\end{align*}
Next,  using Poincar\'{e} inequality for $\cH_0^0(\Omega)$ again with \eqref{Asymptoti-o}  we get that
\begin{align*}
 &\cE{\g}(u,u)=  \frac{d_N}{2}\int_{\R^N}\int_{\R^N}\frac{|u(x)-u(y)|^2}{|x-y|^N}\omega(|x-y|)\ dxdy\\
 & \hspace{1cm}\le \Gamma(\frac{N}{2})  \iint_{\substack{ x,y\in \Omega\\|x-y|<1}}\frac{|u(x)-u(y)|^2}{|x-y|^N}\ dxdy+ 2\int_{\Omega}|u(x)|^2 \int_{ \Omega\cap \{|x-y|\ge 1\}}\omega(|x-y|)\ dydx\\
& \hspace{3cm}+\int_{\Omega}|u(x)|^2\int_{\R^N\setminus\Omega}\frac{\omega(|x-y|)}{|x-y|^N}\ dydx \  \le C_2\|u\|_{\cH_0^0(\Omega)}
\end{align*}
with 
\[
C_2:=C\Big(1+ \sup_{x\in\Omega }\Big(\int_{\R^N\setminus\Omega}\frac{\omega(|x-y|)}{|x-y|^N}\ dy +\int_{ \Omega\cap \{|x-y|\ge 1\}}\omega(|x-y|)\ dy\Big)\Big)<\infty.
\]
The proof of $(iv)$ follows from \cite[Theorem 3.1]{RefHT}  and $(iii)$  since The space $C_c^{\infty}(\Omega)$ is dense in  $\cH^0_0(\Omega)$ and 
$
\text{the embedding \  $\cH^0_0(\Omega) \hookrightarrow {L^2(\Omega)}$ \ is compact}.
$
The  proof ends here.
\end{proof}
As consequence of the Poincar\'e inequality, we have for  bounded $\Omega$  with continuous boundary that  the space $\cH_0^{\log}(\Omega)$ can be identified by
 \[
\cH_0^{\log}(\Omega)=\Big \{ u\in H^{\log}(\R^N): \quad u\equiv 0 \text{ \ on \ }\ \R^N\setminus \Omega\Big\}.
\]
 and it is a Hilbert  space endowed with the scalar product
$
	(v,w) \mapsto \cE_{\g}(v,w) 
$
and the corresponding norm ~$\|u\|_{\H_0^{\log}(\Omega)}=\sqrt{\cE_{\g}(u,u)}$.

%

\section{Eigenvalue problem }\label{Sec-eigenvalue-problem}
In this section, we provide the proof of Theorem \ref{eigenvalue},   proposition \ref{Bound-eigenfunction}  and Theorem \ref{Faber-Krahn} concerning the study of the Dirichlet eigenvalue problem in bounded open set $\Omega$, 
\begin{equation}\label{eq2-eigenvalue}
	\begin{split}
	\quad\left\{\begin{aligned}
		(I-\Delta)^{\log}u &= \lambda u && \text{ in $\Omega$}\\
		u &=0             && \text{ on }  \mathbb{R}^N\setminus \Omega. 
	\end{aligned}\right.
	\end{split}
	\end{equation}
We start with the
\begin{proof}[Proof of Theorem \ref{eigenvalue}]
Let $\Psi: \H_0^{\log}(\Omega)\to \R$ be the functional defined by 
\[
\Psi(u):= \cE_{\g}(u,u)=\|u\|^2_{\H_0^{\log}(\Omega)}. \quad 
\]
We use the direct method of minimization. Let $\{u_n\}_{n\in \N}$  be a minimizing sequence for $\Psi$ in $\cP_1(\Omega):=\{u\in \cH^{\log}_0(\Omega): \|u\|_{L^2(\Omega)}=1\}$, that is
\[
\lim_{n\to \infty}\Psi(u_n) = \inf_{u\in \cP_1(\Omega)}\Psi(u)\ge 0>-\infty.
\]
Then by the definition of $\Psi$, the sequence $\{u_n\}_{n\in \N}$ is bounded in $\H_0^{\log}(\Omega)$ and up to subsequence, there  exists $u_0\in \H_0^{\log}(\Omega)$ such that thanks to \eqref{emb},
\begin{align}
u_n\rightharpoonup u_0 \quad\text{ weakly  }\quad  \text{  in } \H_0^{\log}(\Omega)\\\label{L-2-conv}
u_n \to u_0 \quad\text{ strongly  }\quad  \text{ in }  L^2(\Omega).
\end{align} 
It follows from \eqref{L-2-conv} that $\|u_0\|_{L^2(\Omega)}= 1$ and that $u_0\in \cP_1(\Omega)$. Using the lower-semi-continuity of the norm in $ \H_0^{\log}(\Omega)$, we deduce that
\[
\inf_{u\in \cP_1(\Omega)}\Psi(u)= \lim_{n\to \infty}\Psi(u_n)\ge \Psi(u_0)\ge \inf_{u\in \cP_1(\Omega)}\Psi(u).
\]
This yields that $ \Psi(u_0) = \displaystyle\inf_{u\in \cP_1(\Omega)}\Psi(u)$ and, the first eigenvalue is $\lambda_1(\Omega) = \Psi(u_0)$, with the corresponding eigenfunction $\phi_1= u_0\in \cP_1(\Omega)$. By the Lagrange multipliers theorem, there exists $\lambda\in \R$ such that 
\begin{equation}\label{Lagrange-constant}
\cE_{\g}(\phi_1,v)=\langle\Psi'(\phi_1),v\rangle= \lambda\int_{\Omega}\phi_1v\ dx\quad \text{ for all } \quad v\in \H_0^{\log}(\Omega).
\end{equation}
Taking in particular $v=\phi_1$, we find that $\lambda=\lambda_1(\Omega)=\cE_{\g}(\phi_1,\phi_1)$. We next show that $\phi_1$ does not change sign in $\Omega$. Indeed, since $\cE_{\g}(|v|,|v|)\le \cE_{\g}(v,v)$ for  $v\in \H_0^{\log}(\Omega)$, it follows that   $|\phi_1|\in \cP_1(\Omega)$ and by the definition of $\lambda_1(\Omega)$ we have  that 
\[
\lambda_{1}(\Omega)= \cE_{\g}(|\phi_1|,|\phi_1|),
\]
showing that $\phi_1$ does not change sign in $\Omega$. We may assume that $\phi_1$ is nonnegative.  Suppose then that $\phi_1(x_0) = 0$ for some $x_0\in\Omega$. Then
\[
0=\lambda_1(\Omega)\phi_1(x_0) =-d_N\int_{\R^N}\frac{\phi_1(x_0)}{|x-y|^N}\g(|x-y|)\ dy<0
\]
which yields a contradiction. Therefore $\phi_1>0$ in $\Omega$ and $(i)$ is proved.

We prove $(ii)$ via contradiction. Suppose that there exists a function $v\in \cP_1(\Omega)$  satisfying $(I-\Delta)^{\log}v=\lambda_1 v$ with $v\neq \alpha \phi_1$ for every $\alpha\in \R$. Then $w:= v-\alpha\phi_1$ satisfies also $(I-\Delta)^{\log}w=\lambda_1 w$. But since $\phi_1>0$ in $\Omega$, by choosing  $\alpha=\frac{v(x_0)}{\phi_1(x_0)}$, $x_0\in \Omega$, it follows that $w$ vanishes at  $x_0\in \Omega$ and therefore must change sign. This contradicts  $(i)$ and thus the eigenvalue $\lambda_1(\Omega)$ is simple.

We prove $(iii)$ by  induction. We first note that, if follows from  the simplicity of $\lambda_{1}(\Omega)$ in $(ii)$ that $\lambda_1(\Omega)<\lambda_2(\Omega)$. By the same construction as in the case $k=1$, we get a sequence of  eigenfunctions $\phi_{2},\cdots,\phi_{k}\in \cH^{\log}_0(\Omega)$ and eigenvalues $\lambda_{2}(\Omega)\le \cdots\le \lambda_{k}(\Omega)$, $k\in\N$ with the properties that 
\[
\lambda_{j}(\Omega)= \inf_{u\in\cP_j(\Omega)}\cE_{\g}(u,u)= \cE_{\g}(\phi_j,\phi_j), \qquad j=1,\cdots, k \quad\text{ and }
\]
\[
\cE_{\g}(\phi_j,v)= \lambda_j(\Omega)\int_{\Omega}\phi_jv\ dx\quad \text{ for all } \quad v\in \H_0^{\log}(\Omega).
\]
Next, we define $\lambda_{k+1}(\Omega)$ as in \eqref{Lambda-k}, that is 
\[
\lambda_{k+1}(\Omega)= \inf_{u\in \cP_{k+1}(\Omega)}\cE_{\g}(u,u).  
\]
By the same argument as above, the value $\lambda_{k+1}(\Omega)$ is attained by a function $\phi_{k+1}\in \cP_{k+1}(\Omega)$  and by the Lagrange multipliers theorem, there exists $\lambda\in \R$ such that 
\begin{equation}\label{Lagrange-constant-for -k}
\cE_{\g}(\phi_{k+1},v)=  \lambda\int_{\Omega}\phi_{k+1}v\ dx\quad \text{ for all } \quad v\in \cP_{k+1}(\Omega).
\end{equation}
Taking in particular $v=\phi_{k+1}$ in \eqref{Lagrange-constant-for -k}, we get that $\lambda=\lambda_{k+1}(\Omega)$. Moreover, for $j=1,\cdots k$, it follows from the definition of $\cP_{k+1}(\Omega)$ and  taking $v=\phi_{j}$ in \eqref{Lagrange-constant-for -k}, we find that
\begin{equation}\label{equal-0}
\cE_{\g}(\phi_{k+1},\phi_{j})= 0 = \lambda_j(\Omega)\int_{\Omega}\phi_{k+1}\phi_{j}\ dx.
\end{equation}
In other to conclude that $\phi_{k+1}$ is an eigenfunction corresponding to eigenvalue $\lambda_{k+1}(\Omega)$, we need to show that \eqref{Lagrange-constant-for -k} holds for all $v\in \cH^{\log}_0(\Omega)$. To see this we write $\cH^{\log}_0(\Omega)=span\{\phi_1,\cdots,\phi_k\}\oplus\cP_{k+1}(\Omega)$ such that any $v\in \cH^{\log}_0(\Omega)$ can be written as $v=v_1+v_2$ with $v_1\in span\{\phi_1,\cdots,\phi_{k}\}$ and $v_2\in \cP_{k+1}(\Omega)$. It follows from \eqref{Lagrange-constant-for -k} with $v$ replaced by $v_2=v-v_1\in \cP_{k+1}(\Omega)$ that 
\begin{align*}
0&=\cE_{\g}(\phi_{k+1}, v_2)-\lambda_{k+1}(\Omega)\int_{\Omega}\phi_{k+1}v_2\ dx\\
&=\cE_{\g}(\phi_{k+1}, v)-\cE_{\g}(\phi_{k+1}, v_1)-\lambda_{k+1}(\Omega)\int_{\Omega}\phi_{k+1}(v-v_1)\ dx\\
&=\cE_{\g}(\phi_{k+1}, v)-\lambda_{k+1}(\Omega)\int_{\Omega}\phi_{k+1}v\ dx,
\end{align*}
where we used  equality in \eqref{equal-0}. This shows that \eqref{Lagrange-constant-for -k} holds for all $v\in \cH^{\log}_0(\Omega)$. We have just  constructed inductively an $L^2$-normalized sequence $\{\phi_{k}\}_{k\in\N}$ in  $\cH^{\log}_0(\Omega)$ and a nondecreasing sequence  $\{\lambda_{k}\}_{k\in \N}$ in $\R$  such that \eqref{Lambda-k} holds and such that $\phi_k$ is an eigenfunction of \eqref{eq1-eigenvalue} corresponding to $\lambda=\lambda_{k}(\Omega)$ for every $k \in  \N$. Moreover, we have by construction that $\{\phi_{k}\}_{k\in\N}$ form an orthogonal system in $L^2(\Omega)$. To complete the proof of $(iii)$, it remains to show that $\lim_{k\to +\infty}\lambda_{k}(\Omega)=+\infty$. Suppose by contradiction that $$\cE_{\g}(\phi_k,\phi_k)=\lambda_k(\Omega)\to c_0\in \R \quad\text{ as } \quad k\to +\infty \qquad\text{ for every }\ k\in \N.$$
Then the sequence  $\{\phi_k\}_{k\in\N}$ is bounded in $\cH^{\log}_0(\Omega)$ and, up to subsequence, there is $\phi_0\in \cH^{\log}_0(\Omega)$  such that 
\[
\phi_k\to \phi_0 \quad\text{ in } \quad L^2(\Omega) \quad\text{ as } k\to +\infty.
\]
It follows in particular that $\{\phi_k\}_{k\in\N}$ is a Cauchy sequence in $L^2 (\Omega)$. But orthogonality in $L^2(\Omega)$ implies that $\|\phi_k-\phi_j\|_{L^2(\Omega)}=2$ for every $k$ and $j$, which leads to a contradiction.

For the proof of assertion $(iv)$, the orthogonality follows from from $(iii)$. we  then need to show that the sequence of eigenfunctions $\{\phi_k\}_{k\in\N}$ is a basis for both $L^2(\Omega)$ and $\cH^{\log}_0(\Omega)$. Let suppose by contradiction that there exists a nontrivial $u\in \cH_0^{\log}(\Omega)$ with 
\begin{equation}\label{Contradiction}
\text{$\|u\|_{L^2(\Omega)}=1$ \    and \  $\int_{\Omega}\phi_k u\ dx =0$ \  for any $k\in \N$ }.
\end{equation}
Since we have that $\displaystyle\lim_{k\to +\infty}\lambda_{k}(\Omega)= +\infty$, there exists an integer $k_0>0$ such that 
\[
\Psi(u) <\lambda_{k_0}(\Omega)= \inf_{v\in \cP_{k_0}(\Omega)}\Psi(v).
\]
This implies that $u\notin \cP_{k_0}(\Omega)$ and, by the definition of $ \cP_{k_0}(\Omega)$, we have  that $\int_{\Omega}\phi_ju \ dx \neq 0$ for some $j\in\{1,\cdots,k_0-1\}$. This contradicts \eqref{Contradiction}. We conclude that $\cH^{\log}_0(\Omega)$ is contained in the $L^2$-closure of the span of $\{\phi_{k} : k \in  \N\}$. Since $\cH^{\log}_0(\Omega)$ is dense in $L^2(\Omega)$, we conclude that the span of $\{\phi_k : k\in\N\}$ is dense in $L^2(\Omega)$, and hence, the sequence  $\{\phi_k\}_{ k\in\N}$ is an orthonormal basis of $L^2(\Omega)$. This complete the proof of Theorem \ref{eigenvalue}.
\end{proof}
We next give the

\begin{proof}[Proof of Proposition \ref{Bound-eigenfunction}]

We work here with the $\delta$-decomposition of the nonlocal operators as described in \cite[Theorem 3.1]{RefPST}. For this, let $\Omega\subset \R^N$ be  open and bounded set of $\R^N$. For $\delta>0$, we let $J_{\delta}:= 1_{B_{\delta}}J$ and $K_{\delta}:= J-J_{\delta}$.  Note that for $u,v\in \cH^{\log}_0(\Omega)$, 
\begin{equation*}\label{split1-bilinear}
\begin{split}
\cE_{\g}(u,v) &=\cE^{\delta}_{\g}(u,v)+  \frac{d_N}{2}\int_{\R^N}\int_{\R^N}(u(x)-u(y))(v(x)-v(y))K_{\delta}(x-y) \ dxdy \\
&=\cE^{\delta}_{\g}(u,v)+\kappa_{\delta}\langle u,v\rangle_{L^2(\R^N)}-\langle K_{\delta}\ast u,v\rangle_{L^2(\R^N)}
\end{split}
\end{equation*}
where the $\delta$-dependent quadratic form $\cE^{\delta}_{\g}$ is given by 
\[
(u,v)\mapsto \cE_{\g}^{\delta}(u,v)=\frac{d_N}{2}\int_{\R^N}\int_{\R^N}(u(x)-u(y))(v(x)-v(y))J_{\delta}(x-y)  \ dxdy, 
\]
the function $K_{\delta} \in L^1(\R^N)$  and the constant $\kappa_{\delta}$ is 
\[
\kappa_{\delta}= \int_{\R^N}K_{\delta}(z)\ dz>\int_{B_1\setminus B_{\delta}} \frac{1}{|z|^{N}}\ dz = -c_N\ln\delta\to +\infty \quad\text{ as }\quad \delta\to 0.
\]
Next, let $c>0$ be a constant to be chosen later.   Consider the function $w_c=(u-c)^+: \Omega \to \R$. Then $w_c\in \cH^{\log}_0(\Omega)$ by Lemma \ref{Embedd} see also \cite[Lemma 3.2]{RefJ2}. Moreover, for $x,y \in \R^N$ we have that $(u(x)-u(y))(w_c(x)-w_c(y)) \ge  (w_c(x)-w_c(y))^2$. Indeed,
  \begin{align*}
    &(u(x)-u(y))(w_c(x)-w_c(y)) = ([u(x)-c]-[u(y)-c])(w_c(x)-w_c(y))\\
&=[u(x)-c]w_c(x)+[u(y)-c]w_c(y)-[u(x)-c]w_c(y) -w_c(x)[u(y)-c]\\
    &=w_c^2(x)+w_c^2(y)-2 w_c(x)w_c(y)+ [u(x)-c]^- w_c(y)+ w_c(x)[u(y)-c]^-\\
    &\ge w_c^2(x)+w_c^2(y)-2 w_c(x)w_c(y) = (w_c(x)-w_c(y))^2.
  \end{align*}
This implies that   
\begin{align}
  \cE_{\g}^\delta(w_c,w_c)&= \frac{d_N}{2}\int_{\R^N}\int_{\R^N}(w_c(x)-w_c(y))^2J_{\delta}(x-y)\,dxdy\nonumber\\
  &\le   \frac{d_N}{2}\int_{\R^N}\int_{\R^N} (u(x)-u(y))(w_c(x)-w_c(y))J_{\delta}(x-y)\,dxdy\nonumber\\
  & = \cE_{\g}(u,w_c)-\kappa_{\delta} \langle u,w_c \rangle_{L^2(\Omega)} + \langle K_{\delta} * u\,,w_c \rangle_{L^2(\Omega)} \\
&\le \bigl(\lambda-\kappa_{\delta}\bigr) \langle u,w_c \rangle_{L^2(\Omega)} + \|K_{\delta} * u\|_{L^{\infty}(\R^N)}\langle 1\,,w_c \rangle_{L^2(\Omega)}.\nonumber
  \label{result2:eq1}
\end{align}
Note that  $\kappa_{\delta} \to +\infty$ as $ \delta\to 0$. Hence, we may fix $\delta>0$ such that  $\lambda+\kappa_{\delta}<-1$. Moreover, with this choice of $\delta$, together with the trivial inequality $u(x)w_c(x)\ge cw_c(x)$  for $x\in \Omega$,  we infer that 
\begin{equation}\label{Ineq-delta}
\begin{split}
 \cE_{\g}^\delta(w_c,w_c)&\le   \int_{\Omega} (\|K_{\delta} * u\|_{L^{\infty}(\R^N)}-c)w_c\ dx\\
 &\le   \int_{\Omega} (c_{N,\delta}\| u\|_{L^{2}(\R^N)}-c)w_c\ dx.
 \end{split}
\end{equation}
 The quantity $ c_{N,\delta}\| u\|_{L^{2}(\R^N)}$ is obtained  in the following computation using  H\"{o}der's (or Young's) inequality combined with the asymptotics in \eqref{Asymptoti-o},
\begin{align*}
\|k_{\delta} * u\|_{L^{\infty}(\R^N)}&\le   c_{N,\delta}\|u\|_{L^2(\R^N)} .
\end{align*}
We then  deduce from \eqref{Ineq-delta} with $c>c_{N,\delta}\|u\|_{L^2(\R^N)} $  that 
\begin{equation}\label{Ineq-delta-2}
0\le  \cE_{\g}^\delta(w_c,w_c)\le   0,
\end{equation}
which implies that $\cE_{\g}^\delta(w_c,w_c)=0$. Consequently, $w_c = 0$ in $\Omega$  by the Poincar\'{e}  type inequality. But then $u(x) \le c  \quad a.e.$  \  in $\Omega$, and therefore
\[
u(x)\le c_{N}\|u\|_{L^2(\R^N)}.
\]
Repeating the above argument for $-u$ in place of $u$ , we conclude that 
\[
\|u\|_{L^{\infty}(\Omega)}\le c\|u\|_{L^2(\R^N)}.
\]
This complete the proof of Proposition \ref{Bound-eigenfunction}.
\end{proof}
For the proof of Theorem \ref{Faber-Krahn}, we  first state a Polya-Szeg\"o type inequality  for $\logrel$.
\begin{lemma}\label{Polya-Szego}
Let $u^{\ast}$ be  the symmetric radial decreasing rearrangement of $u$.  Then,   
\begin{equation}\label{Energy-loglap}
\cE_{\g}(u^{\ast},u^{\ast})\le \cE_{\g}(u,u).
\end{equation}
Moreover, the  equality occurs for radial decreasing functions. Here,
\end{lemma}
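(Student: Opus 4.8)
The plan is to reduce the Pólya--Szegő inequality for $\cE_{\g}$ to the classical Riesz rearrangement inequality, exploiting the fact that $\cE_{\g}$ has a nonnegative, radially symmetric, radially decreasing kernel $J$. First I would rewrite the quadratic form in the equivalent ``carré du champ'' form
\[
\cE_{\g}(u,u)=\frac12\int_{\R^N}\int_{\R^N}(u(x)-u(y))^2 J(x-y)\,dx\,dy
=\int_{\R^N}u(x)^2\Big(\int_{\R^N}J(z)\,dz\Big)dx-\int_{\R^N}\int_{\R^N}u(x)u(y)J(x-y)\,dx\,dy,
\]
which is legitimate because $J\in L^1(\R^N)$ (this is exactly what the far-field decay in \eqref{Asymptoti-o} together with the local bound $J(z)\sim\pi^{-N/2}\Gamma(N/2)|z|^{-N}$ gives, after noting the local singularity is mild enough to be integrable against $(u(x)-u(y))^2$ for $u\in H^{\log}$, while $J$ itself is integrable at infinity). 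The key point is that the first term is invariant under symmetric decreasing rearrangement, since $\int u^2=\int (u^*)^2$, so the inequality $\cE_{\g}(u^*,u^*)\le\cE_{\g}(u,u)$ is equivalent to
\[
\int_{\R^N}\int_{\R^N}u(x)u(y)J(x-y)\,dx\,dy\ \le\ \int_{\R^N}\int_{\R^N}u^*(x)u^*(y)J(x-y)\,dx\,dy .
\]

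The second step is to invoke the Riesz rearrangement inequality in the three-function form: for nonnegative measurable $f,g,h$ on $\R^N$,
\[
\int_{\R^N}\int_{\R^N}f(x)g(y)h(x-y)\,dx\,dy\ \le\ \int_{\R^N}\int_{\R^N}f^*(x)g^*(y)h^*(x-y)\,dx\,dy .
\]
I would apply this with $f=g=|u|$ and $h=J$. Since $J$ is already radially symmetric and radially nonincreasing (as $\omega(r)=2^{1-N/2}r^{N/2}K_{N/2}(r)$ divided by $r^N$ is decreasing, which follows from the monotonicity and asymptotics of $K_\nu$ recorded in \eqref{M-Bessl-Asymtotic}--\eqref{Monotonicity-Bessel}), we have $h^*=J$; and $(|u|)^*=u^*$. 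Combining with $\cE_{\g}(|u|,|u|)\le\cE_{\g}(u,u)$ from Lemma~\ref{Embedd}(1) and the rearrangement identity for the $L^2$ mass term, this yields \eqref{Energy-loglap}. One should also check that $u^*\in H^{\log}(\R^N)$, i.e. that $\cE_{\g}(u^*,u^*)<\infty$; this is automatic from the chain of inequalities once we know $\cE_{\g}(u,u)<\infty$, since every quantity produced is dominated by $\cE_{\g}(u,u)$.

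For the equality statement, if $u$ is already radially symmetric and radially decreasing then $u=u^*$ a.e. (up to a null set / choice of representative), so $\cE_{\g}(u^*,u^*)=\cE_{\g}(u,u)$ trivially; this is all the lemma claims, so no equality analysis of the Riesz inequality is needed here. (The sharper ``equality forces $u$ to be a translate of $u^*$'' statement is not asserted in the lemma and is deferred to the Faber--Krahn proof.)

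The main obstacle I anticipate is the integrability bookkeeping needed to split the quadratic form: one must be sure that $\int\int u(x)u(y)J(x-y)\,dx\,dy$ converges absolutely for $u\in H^{\log}(\R^N)$ so that the rearrangement step is applied to finite quantities. This is handled by writing $u(x)u(y)=\tfrac12(u(x)^2+u(y)^2)-\tfrac12(u(x)-u(y))^2$ and controlling the $(u(x)-u(y))^2$ part by $\cE_{\g}(u,u)<\infty$ and the $u(x)^2+u(y)^2$ part by $\|u\|_{L^2}^2\|J\|_{L^1}<\infty$, using $J\in L^1(\R^N)$; alternatively one truncates $u$ and $J$ near the diagonal, applies Riesz to the truncations, and passes to the limit by monotone/dominated convergence. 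I would present the truncation-free version since all the needed bounds are already available from \eqref{Asymptoti-o} and the definition of $H^{\log}(\R^N)$.
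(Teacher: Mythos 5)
Your overall strategy (exploit that $J$ is radial and nonincreasing and reduce to the Riesz rearrangement inequality) is viable, but the version you say you would actually present rests on a false claim: $J\notin L^1(\R^N)$. By \eqref{Asymptoti-o} we have $J(z)\sim \pi^{-N/2}\Gamma(N/2)|z|^{-N}$ as $|z|\to 0$, and $|z|^{-N}$ is exactly the non-integrable borderline near the origin, so $\int_{\R^N}J(z)\,dz=+\infty$. Consequently the ``truncation-free'' splitting $\cE_{\g}(u,u)=\|u\|_{L^2}^2\int_{\R^N}J\,dz-\iint u(x)u(y)J(x-y)\,dx\,dy$ is not available: both pieces are infinite whenever $u\not\equiv 0$ (near the diagonal $|u(x)u(y)|\approx u(x)^2$ while $\int_{|z|<1}J(z)\,dz=\infty$), and finiteness of $\cE_{\g}(u,u)$ controls only the difference form, not the two terms separately. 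So the cross term to which you want to apply Riesz is not a finite quantity, and your closing assertion that ``all the needed bounds are already available from \eqref{Asymptoti-o}'' is contradicted by \eqref{Asymptoti-o} itself. The fix is exactly the alternative you relegate to a parenthesis: replace $J$ by a truncation such as $J_n=\min\{J,n\}$ (still radial nonincreasing, now in $L^1$ because of the exponential decay at infinity), perform the carr\'e du champ splitting and the Riesz inequality for $J_n$, and let $n\to\infty$ by monotone convergence of the difference form. That truncated argument should be the one you write out; as stated, the main route fails.

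Two further remarks. First, \eqref{Monotonicity-Bessel} is monotonicity of $K_\nu$ in the \emph{index} $\nu$, not in the argument $r$, so it does not by itself justify that $J$ is radially decreasing; the cleanest justification is the subordination formula \eqref{lim-omega}, which (after a change of variables) exhibits $J(z)$ as a constant times $\int_0^\infty e^{-t|z|^2}t^{N/2-1}e^{-1/(4t)}\,dt$, a superposition of Gaussians. This is in fact the paper's entire proof: it uses this Gaussian decomposition together with Fubini to write $\cE_{\g}(u,u)$ as an integral in $t$ of the forms $G(t,u)=\iint|u(x)-u(y)|^2e^{-t|x-y|^2}\,dx\,dy$, and then invokes the known inequality $G(t,u^{\ast})\le G(t,u)$ from \cite{RefFJEH,RefRLR}, thereby bypassing the integrability issue at the level of $J$ (each Gaussian kernel is in $L^1$ and equals its own rearrangement). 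Your truncated-Riesz argument and the paper's subordination argument are close in spirit; either is acceptable once the $L^1$ issue is handled honestly. Second, your treatment of the equality statement (for radial decreasing $u$ one has $u=u^{\ast}$, so equality is trivial) matches what the lemma actually asserts and is fine.
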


\begin{proof}
By a changes of variable, we write the kernel  $J$ as
\begin{align*}
J(z)=d_N|z|^{-N}\omega(|z|)  
&= 4(\frac{\pi}{2})^{-\frac{N}{2}}\int_{0}^{\infty}e^{-t|z|^2}t^{\frac{N}{2}-1}e^{-\frac{1}{4t}}\ dt.
\end{align*}
 Then by Fubuni's theorem, we write the quadratic form as
\begin{align*}
\cE_{\g}(u,u)&= \frac{1}{2}\int_{\R^N}\int_{\R^N}|u(x)-u(y)|^2J(x,y)\ dxdy
= 2(\frac{\pi}{2})^{-\frac{N}{2}}\int_{0}^{\infty} G(t,u) \   t^{\frac{N}{2}-1}e^{-\frac{1}{4t}}\ dt,
\end{align*}
where, 
\[
G(t,u):= \int_{\R^N}\int_{\R^N}|u(x)-u(y)|^2e^{-t|x-y|^2}\ dxdy.
\]
Noticing that 
$$
\Big(e^{-t|z|^2}\Big)^{\ast} = e^{-t|z|^2}, \quad\text{ for all }\quad t\ge 0,
$$
It follows from \cite[corollary 2.3 and Theorem 9.2]{RefFJEH} see also \cite[Theorem $A_1$]{RefRLR} that 
\[
G(t,u^{\ast})\le G(t,u) \quad\text{ for all }\quad t\ge 0.
\]
This gives that 
\begin{equation}
\cE_{\g}(u^{\ast},u^{\ast})\le \cE_{\g}(u,u)\quad\text{ for  }\quad u\in H^{\log}(\R^N).
\end{equation}
The proof of Lemma \ref{Polya-Szego} is completed.
\end{proof}
\begin{proof}[Proof of Theorem \ref{Faber-Krahn}]
This   is a direct consequence of lemma \ref{Polya-Szego} and the characterization of the first eigenvalue $\lambda_{1,\log}(\Omega)$ of $\logrel$ in $\Omega$. 
 Since we know by Theorem \ref{eigenvalue} that the first eigenfunction $\phi_{1,\log}$ corresponding to $\lambda_{1,\log}(\Omega)$ is unique and strictly positive in $\Omega$, we   have  thanks to Lemma \ref{Polya-Szego}  that 
\[
\lambda_{1,\log}(\Omega)= \frac{\cE_{\g}(\phi_{1,\log},\phi_{1,\log})}{\|\phi_{1,\log}\|^2_{L^2(\Omega)}}\ge  \frac{\cE_{\g}(\phi^{\ast}_{1,\log},\phi^{\ast}_{1,\log})}{\|\phi^{\ast}_{1,\log}\|^2_{L^2(B^{\ast})}}\ge \inf_{u\in \cH_0^{\log}(B^{\ast})}\frac{\cE_{\g}(u,u)}{\|u\|^2_{L^2(B^{\ast})}}= \lambda_{1,\log}(B^{\ast}),
\] 
where we have used  (see \cite[Lemma 3.3]{RefFB}) the fact that
\[
\int_{\Omega}|u|^2\ dx = \int_{B^{\ast}} |u^{\ast}|^2 \ dx.
\]
This gives the proof of  \eqref{Faber}.  For the equality, if we suppose that $\lambda_{1,\log}(\Omega)= \lambda_{1,\log}(B^{\ast})$ with $|\Omega|= |B^{\ast}|$, then we must have the following  equality 
\[
\cE_{L}(\phi_{1,\log},\phi_{1,\log})=\cE_{L}(\phi^{\ast}_{1,\log},\phi^{\ast}_{1,\log})
\]
and by \cite[Lemma $A_2$]{RefRLR} we deduce that the first eigenfunction $\phi_{1,\log}$ has to  be proportional to a translate of a radially symmetric decreasing function such that the level set 
$$\Omega_0:=\{x\in \R^N:\quad \phi_{1,\log}>0\}$$
 is a ball. Since $\phi_{1,\log}>0$ in $\Omega$ by definition and it is unique, it follows  that $\Omega$ must coincide with $\Omega_0$ and has to be a ball. The proof of Theorem \ref{Faber-Krahn} is then completed.
\end{proof}

\section{Small order Asymptotics}\label{Sect-small}
This section is dedicated to the  proof of Theorem \ref{convergent-1}. We first  introduce some notions and preliminary lemmas that shall be used.
For $0< s<1$, we introduce the Sobolev space  (see\cite{RefEM,RefSAO})
\[
H^s(\R^N)= \Big\{ u\in L^2(\R^N): \quad \int_{\R^N}\int_{\R^N}\frac{|u(x)-u(y)|^2}{|x-y|^{N+2s}}\omega_{s}(|x-y|) dxdy<\infty\Big\}
\]
with corresponding norm given by 
\begin{align*}
\|u\|_{H^s(\R^N)}&=\Big( \|u\|^2_{L^2(\R^N)}+\int_{\R^N}\int_{\R^N}\frac{|u(x)-u(y)|^2}{|x-y|^{N+2s}}\omega_{s}(|x-y|) dxdy\Big)^{\frac{1}{2}}\\
&=\Big( \|u\|^2_{L^2(\R^N)}+\int_{\R^N}(1+|\xi|^2)^s|\cF(u)(\xi)|^2\, d\xi\Big)^{\frac{1}{2}}.
\end{align*}
Let $\Omega\subset\R^N$ be an open bounded set.   We will use the fact that (see \cite{RefEM})
$$\text{ the space  $C^2_c(\Omega)$ is dense in $\cH^s_0(\Omega)$,}$$  where the space $\cH_0^s(\Omega)$  is the completion of $\cC_c^{\infty}(\Omega)$ with respect to the norm $\|\cdot\|_{H^s(\R^N)}$. We start with the following Dirichlet eigenvalue problem
\begin{equation}\label{s-eigenvalue}
	\begin{split}
	\quad\left\{\begin{aligned}
		(I-\Delta)^{s}u &= \lambda u && \text{ in $\Omega$}\\
		u &=0             && \text{ on }  \mathbb{R}^N\setminus \Omega,
	\end{aligned}\right.
	\end{split}
	\end{equation}
	where $\Omega$ is a bounded Lipschitz  open set of $\R^N$. We define the first Dirichlet eigenvalue of $(I-\Delta)^s$ in $\Omega$ by 
\begin{equation}
\lambda_{1,s}(\Omega)= \inf_{u\in C^2_c(\Omega)}\frac{\cE_{\g ,s}(u,u)}{\|u\|_{L^2}(\Omega)}=\inf_{\substack{u\in C^2_c(\Omega)\\\|u\|_{L^2(\Omega)}=1}}\cE_{\g ,s}(u,u),
\end{equation}
where the quadratic form $(u,v)\mapsto\cE_{\g,s}(u,v)$ is defined by 
\begin{align*}
\cE_{\g,s}(u,v)&=\int_{\Omega}u(x)v(x)\ dx -\frac{d_{N,s}}{2}\int_{\R^N}\int_{\R^N}\frac{(u(x)-u(y))(v(x)-v(y))}{|x-y|^{N+2s}}\omega_{s}(|x-y|) dxdy\\
&=\int_{\R^N}(1+|\xi|^2)^s\cF(u)(\xi)\cF(v)(\xi)\, d\xi.
\end{align*}
By the Courant-Fischer minimax principle, the eigenvalues $\lambda_{k,s}(\Omega)$,  $k \in \N$ can be characterized equivalently as 
\begin{equation}\label{char-eigen}
\lambda_{k,s}(\Omega)= \inf_{\substack{V\subset \cH^s_0(\Omega)\\ \dim V = k}}\:\max_{\substack{v\in V\setminus\{0\} \\ \|v\|_{L^2(\Omega)}=1}} \cE_{\g,s}(v,v) =\inf_{\substack{V\subset C^2_c(\Omega)\\ \dim V = k}}\: \max_{\substack{v\in V\setminus\{0\} \\ \|v\|_{L^2(\Omega)}=1}} \cE_{\g,s}(v,v).
\end{equation}
\begin{remark}\label{Remark}
 Noticing that  $(1+|\xi|^2)^s\ge |\xi|^{2s}$ for $s\in (0,1)$ and $\xi\in \R^N$, we have via the  Fourier transform of the functional $\cE_{\g,s}(\cdot,\cdot)$ for $(I-\Delta)^s$ and $\cE_s(\cdot,\cdot)$ for the fractional Laplacian $(-\Delta)^s$ that 
$$\lambda_{k,s}(\Omega)= \cE_{\g,s}(\psi_{k,s},\psi_{k,s})\ge \cE_{s}(\psi_{k,s}, \psi_{k,s})\ge \inf_{\substack{v\in C_c^2(\Omega)\\\|v\|_{L^2(\Omega)}=1}}\cE_s(v,v)= \lambda^F_{1,s}(\Omega),$$
where $\psi_{k,s}$ is a $L^2$-normalized eigenfunction of $(I-\Delta)^s$ corresponding to $\lambda_{k,s}(\Omega)$ and  $\lambda_{1,s}^F(\Omega)$ is the first Dirichlet eigenvalue of the fractional Laplacian $(-\Delta)^s$ in $\Omega$  with
\[
\cE_{s}(u,v):=\frac{c_{N,s}}{2}\int_{\R^N}\int_{\R^N}\frac{(u(x)-u(y))(v(x)-v(y))}{|x-y|^{N+2s}}\ dxdy.
\]
\end{remark}
We need the following elementary estimates and inequalities.
\begin{lemma}
\label{elementary-est}
For $s \in (0,1)$ and $r>0$ we have 
\begin{equation}
\label{elementary-est-1}
\Bigl|\frac{(1+r^2)^{s}-1}{s}\Bigr| \le 2  \Bigl(1 + r^4 \Bigr)   
\end{equation}
and 
\begin{equation}
\label{elementary-est-2}
\Bigl|\frac{(1+r^2)^{s}-1}{s} -  \log(1+ r^2)\Bigr| \le 2s \Bigl( 1 + r^4 \Bigr). 
\end{equation}
Consequently, for every $u \in C^2_c(\Omega)$ and $s \in (0,1)$ we have 
\begin{equation}
  \label{eq:quadratic-form-est-eq1}
\Bigl|\cE_{\g,s}(u,u) - \|u\|_{L^2(\R^N)}^2 \Bigr| \le 2 s\Bigl( \|u\|_{L^2(\R^N)}^2 + \|\Delta u\|_{L^2(\R^N)}^2 \Bigr)  
\end{equation}
and 
\begin{equation}
  \label{eq:quadratic-form-est-eq2}
\Bigl|\cE_{\g,s}(u,u) - \|u\|_{L^2(\R^N)}^2 - s \cE_{\g}(u,u)\Bigr| \le 2 s^2 \Bigl(  \|u\|_{L^2(\R^N)}^2 + \|\Delta u\|_{L^2(\R^N)}^2 \Bigr)
\end{equation}
\end{lemma}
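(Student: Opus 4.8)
The plan is to establish the two scalar inequalities \eqref{elementary-est-1} and \eqref{elementary-est-2} first, since the two quadratic-form estimates then follow immediately by integrating against $|\cF(u)(\xi)|^2$ and using the Plancherel identities $\|u\|_{L^2}^2 = \int_{\R^N}|\cF(u)|^2\,d\xi$ and $\|\Delta u\|_{L^2}^2 = \int_{\R^N}|\xi|^4|\cF(u)|^2\,d\xi$. Concretely, for \eqref{eq:quadratic-form-est-eq1} I would write, using the Fourier representation of $\cE_{\g,s}$,
\[
\Bigl|\cE_{\g,s}(u,u) - \|u\|_{L^2(\R^N)}^2\Bigr| = \left| \int_{\R^N} \bigl((1+|\xi|^2)^s - 1\bigr)|\cF(u)(\xi)|^2\,d\xi \right| \le s \int_{\R^N} \Bigl|\frac{(1+|\xi|^2)^s-1}{s}\Bigr| |\cF(u)(\xi)|^2\,d\xi,
\]
then insert \eqref{elementary-est-1} with $r = |\xi|$ and split $1 + |\xi|^4$ into the two Plancherel terms. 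The estimate \eqref{eq:quadratic-form-est-eq2} is entirely analogous, using \eqref{elementary-est-2} in place of \eqref{elementary-est-1} and recalling from Theorem \ref{Result-1}(ii) that $\cE_{\g}(u,u) = \int_{\R^N}\log(1+|\xi|^2)|\cF(u)(\xi)|^2\,d\xi$. I should note that $u \in C^2_c(\Omega)$ guarantees $u, \Delta u \in L^2(\R^N)$, so both right-hand sides are finite.

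For the scalar inequalities, set $a = 1 + r^2 \ge 1$ and consider $g(s) = a^s = e^{s\log a}$. The key observation is that $\frac{a^s - 1}{s} = \int_0^1 a^{\sigma s}\log a\,d\sigma$ and, by a second application, $\frac{a^s-1}{s} - \log a = \int_0^1 (a^{\sigma s} - 1)\log a\,d\sigma = s\int_0^1\int_0^1 \sigma\, a^{\sigma\tau s}(\log a)^2\,d\tau\,d\sigma$. Since $s \in (0,1)$ and $\sigma,\tau \in [0,1]$, we have $a^{\sigma s}\le a$ and $a^{\sigma\tau s}\le a$, so $\bigl|\frac{a^s-1}{s}\bigr|\le a\log a$ and $\bigl|\frac{a^s-1}{s} - \log a\bigr|\le \tfrac{s}{2}\, a(\log a)^2 \le s\,a(\log a)^2$. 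It then remains to bound $a\log a$ and $a(\log a)^2$ in terms of $1 + r^4$. Using $\log a \le a - 1 = r^2$ gives $a\log a \le (1+r^2)r^2 = r^2 + r^4 \le 1 + 2r^4$ (via $r^2 \le 1 + r^4$), which yields \eqref{elementary-est-1} with room to spare; and $a(\log a)^2 \le (1+r^2)r^4$, which after using $r^4 \le 1 + r^4$ and $r^6 = r^2\cdot r^4 \le (1+r^4) r^4$... here one has to be slightly more careful.

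The main obstacle — really the only delicate point — is getting the numerical constant $2$ to work for \eqref{elementary-est-2}, because $a(\log a)^2 = (1+r^2)r^4$ contains an $r^6$ term that is not dominated by $1 + r^4$ uniformly in $r$. The resolution is to not throw away the exponential decay: instead of $\log a \le r^2$, use the sharper bound $a^{\sigma\tau s}\le a^{s} = (1+r^2)^s$ and note $(1+r^2)^s (\log(1+r^2))^2 \le C$ uniformly... no — cleaner is to observe $(\log a)^2 \le C_\varepsilon a^{\varepsilon}$ type bounds are awkward too. The clean fix: since $a^{\sigma\tau s} \le a^{\sigma\tau s}$ and we only need $s\in(0,1)$, write $a(\log a)^2 \le a\cdot a \cdot \frac{4}{e^2}\cdot$ — better yet, bound $\log a \le 2\sqrt{a}$ wait. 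Honestly the robust route is: $\frac{a^s - 1}{s} - \log a = s\int_0^1\int_0^1 \sigma a^{\sigma\tau s}(\log a)^2\,d\tau d\sigma$ and bound $a^{\sigma\tau s}(\log a)^2 \le \sup_{0\le t\le 1} a^t (\log a)^2$; since $t\mapsto a^t(\log a)^2$ is increasing this is $a(\log a)^2$, and then use the elementary inequality $(\log a)^2 \le a - 1 = r^2$ valid for $1 \le a \le e$ while for $a \ge e$ one uses $a(\log a)^2 \le a^2 \le \ldots$ — this case split against $r^4$ is what needs to be checked to land the constant $2$, and verifying it for the moderate range $a\in[1,e]$ is where I would spend the care. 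Once the scalar bounds hold with constant $2$, everything else is bookkeeping.
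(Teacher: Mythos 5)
Your reduction of the quadratic-form estimates \eqref{eq:quadratic-form-est-eq1}--\eqref{eq:quadratic-form-est-eq2} to the scalar inequalities via the Fourier/Plancherel representation is exactly the paper's argument, and your proof of \eqref{elementary-est-1} is correct: from the integral identity $\frac{a^s-1}{s}=\int_0^1 a^{\sigma s}\log a\,d\sigma$ with $a=1+r^2$ you get $\bigl|\frac{a^s-1}{s}\bigr|\le a^s\log a\le a\log a$, and then $\log a\le a-1=r^2$ gives $a\log a\le r^2+r^4\le 2(1+r^4)$, which is precisely what the paper does (they write it as $\ln(1+r^2)(1+r^2)^s\le(1+r^2)^2\le 2(1+r^4)$, using $\ln(1+r^2)\le 1+r^2$).

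However, you do not actually prove \eqref{elementary-est-2} — you correctly derive $\bigl|\frac{a^s-1}{s}-\log a\bigr|\le \tfrac{s}{2}\,a^s(\log a)^2\le \tfrac{s}{2}\,a(\log a)^2$, correctly recognize that the crude replacement $\log a\le a-1=r^2$ produces an $r^6$ term that is \emph{not} dominated by $1+r^4$, and then trail off without resolving it. This is the one substantive gap: you never land on a bound for $(\log a)^2$ that works uniformly in $r$. The fix is the elementary inequality
\[
\bigl(\ln(1+r^2)\bigr)^2 \;\le\; 1+r^2 \qquad\text{for all } r>0,
\]
i.e. $(\log a)^2\le a$ for $a\ge 1$. (To see it: $g(a)=a-(\log a)^2$ satisfies $g(1)=1$ and $g'(a)=1-\tfrac{2\log a}{a}>0$ for all $a\ge 1$, since $\tfrac{2\log a}{a}$ attains its maximum $2/e<1$ at $a=e$; hence $g\ge 1>0$.) With this in hand, $a^s(\log a)^2\le a\cdot a=(1+r^2)^2\le 2(1+r^4)$, and your Taylor remainder immediately gives $\bigl|\frac{a^s-1}{s}-\log a\bigr|\le \tfrac{s}{2}\cdot 2(1+r^4)=s(1+r^4)\le 2s(1+r^4)$, which is \eqref{elementary-est-2}. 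This is exactly the step the paper uses (``since $\ln^2(1+r^2)\le(1+r^2)$ and $(1+r^2)^s\le(1+r^2)$''); without it your case-splitting discussion does not close the argument, so as written the proof of the lemma is incomplete.
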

\begin{proof}
For fix $r>0$,  let $h_r(s)=(1+r^2)^{s}$. Then we have 
$$h_r'(\tau)=  (1+r^2)^{\tau} \ln (1+r^2)\text{ \quad  and  \quad  } h_r''(\tau)=  (1+r^2)^{\tau} \ln^2 (1+r^2). $$  Consequently, since  $(1+r^2)^{s} \le (1+r^2)$ for $s\in(0,1)$ and $\ln(1+r^2)\le (1+r^2) $,
$$
\Bigl|\frac{(1+r^2)^{s}-1}{s}\Bigr|= \frac{\ln(1+ r^2)}{s}\int_0^s (1+r^2)^{\tau}\,d\tau
\le  \ln(1+ r^2)(1+  r^{2})^s \le  2  \Bigl( 1+r^4 \Bigr)
$$
where in the last step we used that  $(1+r^2)^2\le 2(1+r^4)$ for $r>0$.
Hence \eqref{elementary-est-1}  holds. Moreover, by Taylor expansion,
$$
h_r(s)=  1 + s \ln (1+r^2) +  \ln^2(1+ r^2) \int_{0}^s (1+r^2)^{\tau}(s-\tau)d\tau
$$
and therefore 
$$
\Bigl|\frac{(1+r^2)^{s}-1}{s} -  \log (1+r^2)\Bigr| \le \frac{ \ln^2 (1+r^2)}{s}\Bigl|\int_{0}^s (1+r^2)^{\tau}(s-\tau)d\tau\Bigr| \le  s(1+ r^2)^{s}\ln^2(1+r^2) .
$$
But since $\ln^2(1+r^2)\le (1+r^2)$  and $(1+r^2)^s\le (1+r^2)$ for $s\in (0,1)$,  \eqref{elementary-est-2} follows.
Next, let $u \in C^2_c(\Omega)$ and $s \in (0,1)$. By \eqref{elementary-est-1} and Fourier transform for $\cE_{\g,s}$, we have 
\begin{align*}
 \Bigl|\cE_{\g,s}(u,u) - \|u\|_{L^2(\R^N)}^2\Bigr| &\le 
\int_{\R^N}\bigl|(1+ |\xi|^{2})^s-1 \bigr|\, |\hat u(\xi)|^2\,d\xi\\
&\le2s \int_{\R^N} \left(1+|\xi|^4\right) |\hat u(\xi)|^2\,d\xi\le 2s \Bigl( \| u\|_{L^2(\R^N)}^2  + \|\Delta u\|_{L^2(\R^N)}^2 \Bigr).
\end{align*}
Thus  \eqref{eq:quadratic-form-est-eq1} follows. Moreover, by~\eqref{elementary-est-2} we have  
\begin{align*}
 \Bigl|\cE_{\g,s}(u,u) - \|u\|_{L^2(\R^N)}^2 - s \cE_{\g}(u,u)\Bigr| &\le 
\int_{\R^N}\bigl| (1+|\xi|^{2})^s-1 - s \log(1+ |\xi|^2)\bigr|\, |\hat u(\xi)|^2\,d\xi\\
&= s\int_{\R^N}\Bigl| \frac{(1+|\xi|^{2})^s-1}{s} -  \log(1+ |\xi|^2)\Bigr|\, |\hat u(\xi)|^2\,d\xi\\
&\le 2s^2 \Bigl( \|u\|_{L^2(\R^N)}^2  + \|\Delta u\|_{L^2(\R^N)}^2 \Bigr).
\end{align*}
Hence \eqref{eq:quadratic-form-est-eq2} follows.  This completes the proof of Lemma \ref{elementary-est}.
\end{proof}

\begin{lemma}
\label{upper-est-lambda_k}
For all $k \in \N$ we have 
\begin{equation}
\label{uniform-upper-bound}
\lambda_{k,s}(\Omega) \le 1 +sC \qquad \text{for all $s \in (0,1)$}
\end{equation}
with a constant $C= C(N,\Omega,k)>0$,  and
\begin{equation}
  \label{eq:lim-inf-limsup-prelim}
\limsup_{s \to 0^+} \frac{\lambda_{k,s}(\Omega)-1}{s} \, \le \, \lambda_{k,\log}(\Omega).\end{equation}
Consequently, 
\begin{equation}
\label{eq:lim-lambda-k-to-1}
\lim \limits_{s \to 0^+} \lambda_{k,s}(\Omega)= 1\qquad\text{ for all $k \in \N$}. 
\end{equation}
\end{lemma}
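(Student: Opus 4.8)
The plan is to exploit the variational characterization \eqref{char-eigen} of $\lambda_{k,s}(\Omega)$ together with the quadratic-form estimates in Lemma \ref{elementary-est}, testing on a fixed $k$-dimensional subspace of $C^2_c(\Omega)$ that is near-optimal for the logarithmic problem. First I would fix a $k$-dimensional subspace $V_k \subset C^2_c(\Omega)$ which is $L^2(\Omega)$-orthonormal (on a basis) and for which $\max_{v \in V_k,\ \|v\|_{L^2}=1} \cE_{\g}(v,v)$ is as close as we like to $\lambda_{k,\log}(\Omega)$; this is possible because $C^2_c(\Omega)$ is dense in $\cH_0^{\log}(\Omega)$ (Lemma \ref{Properties}(iv)) and the logarithmic eigenvalues admit the min-max characterization from Theorem \ref{eigenvalue}. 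On such a finite-dimensional space all norms are equivalent, so there is a constant $M = M(V_k)$ with $\|\Delta v\|_{L^2(\R^N)}^2 \le M \|v\|_{L^2(\R^N)}^2$ for every $v \in V_k$.

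Next I would apply \eqref{eq:quadratic-form-est-eq1}: for every $v \in V_k$ with $\|v\|_{L^2(\Omega)}=1$,
\[
\cE_{\g,s}(v,v) \le 1 + 2s\bigl(1 + \|\Delta v\|_{L^2(\R^N)}^2\bigr) \le 1 + 2s(1+M),
\]
and since $V_k$ is admissible in \eqref{char-eigen} this gives $\lambda_{k,s}(\Omega) \le 1 + sC$ with $C = 2(1+M)$, which is \eqref{uniform-upper-bound}. For \eqref{eq:lim-inf-limsup-prelim} I would instead use the sharper estimate \eqref{eq:quadratic-form-est-eq2}: on the same space,
\[
\cE_{\g,s}(v,v) \le 1 + s\,\cE_{\g}(v,v) + 2s^2(1+M) \le 1 + s\bigl(\lambda_{k,\log}(\Omega)+\eta\bigr) + 2s^2(1+M),
\]
where $\eta>0$ is the (arbitrarily small) slack in the choice of $V_k$. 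Taking the max over the unit sphere of $V_k$, dividing by $s$, and letting $s\to 0^+$ yields $\limsup_{s\to 0^+}\frac{\lambda_{k,s}(\Omega)-1}{s} \le \lambda_{k,\log}(\Omega)+\eta$; since $\eta$ is arbitrary, \eqref{eq:lim-inf-limsup-prelim} follows. Finally, \eqref{eq:lim-lambda-k-to-1} is immediate: \eqref{uniform-upper-bound} gives $\limsup_{s\to 0^+}\lambda_{k,s}(\Omega)\le 1$, while the lower bound $\lambda_{k,s}(\Omega)\ge 1$ follows directly from the Fourier representation $\cE_{\g,s}(v,v)=\int_{\R^N}(1+|\xi|^2)^s|\cF v(\xi)|^2\,d\xi \ge \int_{\R^N}|\cF v(\xi)|^2\,d\xi = \|v\|_{L^2}^2$ for $s\in(0,1)$, combined with \eqref{char-eigen}.

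The only delicate point is the selection of the subspace $V_k$: one must be sure that the logarithmic min-max value over $k$-dimensional subspaces of $C^2_c(\Omega)$ equals $\lambda_{k,\log}(\Omega)$, i.e. that the dense subspace $C^2_c(\Omega)$ suffices in the Courant–Fischer principle for $(I-\Delta)^{\log}$. This is a standard density-plus-continuity argument — given the $L^2$-orthonormal eigenfunctions $\phi_1,\dots,\phi_k$ from Theorem \ref{eigenvalue}, approximate them in $\cH_0^{\log}(\Omega)$ by $C^2_c$ functions, orthonormalize in $L^2$, and use continuity of $\cE_{\g}$ on $\cH_0^{\log}(\Omega)$ to control the resulting max — but it is the one place where the finite-dimensional reduction has to be set up carefully so that the constant $M$ in the bound $\|\Delta v\|_{L^2}^2 \le M\|v\|_{L^2}^2$ does not blow up as $\eta \to 0$; this is handled simply by fixing $\eta$ first and only then extracting $V_k$ and $M$.
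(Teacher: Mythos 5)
Your argument is correct, and for the two main estimates it follows the same route as the paper: fix a $k$-dimensional subspace of $C^2_c(\Omega)$, bound $\cE_{\g,s}$ on its $L^2$-unit sphere via \eqref{eq:quadratic-form-est-eq1} and \eqref{eq:quadratic-form-est-eq2} (with the finite-dimensional norm equivalence absorbing $\|\Delta v\|_{L^2}$), and feed this into the min-max characterization \eqref{char-eigen}. The paper organizes this slightly differently -- it works with an \emph{arbitrary} subspace $V\subset C^2_c(\Omega)$ and only at the end takes the infimum over $V$, identifying it with $\lambda_{k,\log}(\Omega)$ -- whereas you fix an $\eta$-near-optimal $V_k$ first; the two bookkeeping schemes are equivalent, and you are in fact more explicit than the paper about the one point both arguments need, namely that the Courant--Fischer value of $\cE_{\g}$ over $k$-dimensional subspaces of $C^2_c(\Omega)$ recovers $\lambda_{k,\log}(\Omega)$ (the paper invokes this implicitly through \eqref{char-eigen}, while you sketch the density-plus-orthonormalization argument and correctly fix $\eta$ before extracting $M$). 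The genuine divergence is in the lower bound for \eqref{eq:lim-lambda-k-to-1}: the paper compares $\lambda_{k,s}(\Omega)$ with the first Dirichlet eigenvalue $\lambda^F_{1,s}(\Omega)$ of the fractional Laplacian via $(1+|\xi|^2)^s\ge|\xi|^{2s}$ and then cites \cite[Lemma 2.8]{RefPST} to get $\liminf_{s\to 0^+}\lambda_{k,s}(\Omega)\ge 1$, while you observe directly that $(1+|\xi|^2)^s\ge 1$ gives $\cE_{\g,s}(v,v)\ge\|v\|_{L^2}^2$ by Plancherel, hence $\lambda_{k,s}(\Omega)\ge 1$ for every $s\in(0,1)$. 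Your version is more elementary and self-contained (it even yields the stronger pointwise bound $\lambda_{k,s}(\Omega)\ge 1$ rather than only a liminf), at no cost; the paper's route has the minor advantage of reusing machinery already quoted in Remark \ref{Remark}.
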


\begin{proof}
We fix a subspace $V\subset C^2_c(\Omega)$ of dimension $k$ and let $S_V:=\{u\in V\;:\; \|u\|_{L^2(\Omega)}=1\}$. Using \eqref{char-eigen} and \eqref{eq:quadratic-form-est-eq1}, we find that, for $s \in (0,1)$,  
\begin{equation}
  \label{eq:upper-est-lambda_k-first-eq}
\frac{\lambda_{k,s}(\Omega)- 1}{s} \le \max\limits_{u\in S_V} \frac{\cE_{\g,s}(u,u) - 1}{s} \le C
\end{equation}
with 
$$
C = C(N,\Omega,k) = 2\max\limits_{u\in S_V}\Bigl( \|u\|_{L^2(\R^N)}^2 + \|\Delta u\|_{L^2(\R^N)}^2 \Bigr).
$$
Hence \eqref{uniform-upper-bound} holds. Moreover, setting $\cR_s(u)= \frac{\cE_{\g,s}(u,u) - 1}{s}- \cE_{\g}(u,u)$ for $u \in C^2_c(\Omega)$, we deduce from \eqref{eq:upper-est-lambda_k-first-eq} that 
$$ 
\frac{\lambda_{k,s}(\Omega)- 1}{s} \le \max\limits_{u\in S_V}\cE_{\g}(u,u) 
+ \max\limits_{u\in S_V}|\cR_s(u)| 
$$
while, by ~(\ref{eq:quadratic-form-est-eq2}), 
$$
|\cR_s(u)| \le 2 s \Bigl( \|u\|_{L^2(\R^N)}^2 + \|\Delta u\|_{L^2(\R^N)}^2\Bigr) \to 0 \quad \text{as $s \to 0^+$ uniformly in $u \in S_V$.}
$$
Consequently, 
$$
\limsup_{s \to 0^+} \frac{\lambda_{k,s}(\Omega)- 1}{s} \le \max\limits_{u\in S_V}\cE_{\g}(u,u).
$$
Since $V$ was chosen arbitrarily, the characterization of the Dirichlet eigenvalues of $(I-\Delta)^{\log}$ given in \eqref{char-eigen}  implies that 
\begin{equation}\label{derivative-k}
\limsup_{s\rightarrow 0^+} \frac{\lambda_{k,s}(\Omega)- 1}{s} \le \inf_{\substack{V\subset C^2_c(\Omega) \\ \dim(V)=k}}\max_{\substack{u\in V\\ \|u\|_{L^2(\Omega)}=1}}\cE_{\g}(u,u) = \lambda_{k,\log}(\Omega).
\end{equation}
This shows that the  inequality in \eqref{eq:lim-inf-limsup-prelim} holds. It   follows directly from \eqref{uniform-upper-bound} that 
\[
\limsup_{s\to 0^+}\lambda_{k,s}(\Omega)\le 1\qquad\text{ for all $k \in \N$}.
\]
From Remark  \ref{Remark}  we have that
$\lambda_{k,s}(\Omega)\ge  \lambda^F_{1,s}(\Omega)$. It  therefore follows from \cite[Lemma 2.8]{RefPST} that 
\[
\liminf_{s\to 0^+}\lambda_{k,s}(\Omega)\ge 1\qquad\text{ for all $k \in \N$}.
\] 
This  proves \eqref{eq:lim-lambda-k-to-1} and  the proof of Lemma \ref{upper-est-lambda_k} is completed.
\end{proof}
\begin{lemma}\label{prop-b}
Let $k \in \N$. If  $\psi_{k,s} \in \cH^s_0(\Omega)$ denote an $L^2$-normalized eigenfunction of $(I-\Delta)^s$, then the set 
$$
\{ \psi_{k,s}\::\: s \in (0,1)\}
$$
is uniformly bounded in $\cH^{\log}_0(\Omega)$ and therefore relatively compact in $L^2(\Omega)$.
 \end{lemma}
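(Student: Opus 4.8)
The plan is to control the $H^{\log}$-energy of the eigenfunctions $\psi_{k,s}$ uniformly in $s$ by comparing it with the quadratic form $\cE_{\g,s}$ evaluated at $\psi_{k,s}$, which equals the eigenvalue $\lambda_{k,s}(\Omega)$. First I would recall that, by Lemma \ref{upper-est-lambda_k}, $\lambda_{k,s}(\Omega)\le 1+sC$ for all $s\in(0,1)$ with $C=C(N,\Omega,k)$, so in particular $\{\lambda_{k,s}(\Omega)\}_{s\in(0,1)}$ is bounded, say by some $M>0$. Since $\psi_{k,s}$ is $L^2$-normalized, it suffices to bound $\cE_{\g}(\psi_{k,s},\psi_{k,s})$ uniformly. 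Passing to the Fourier side, $\cE_{\g}(\psi_{k,s},\psi_{k,s})=\int_{\R^N}\log(1+|\xi|^2)\,|\hat\psi_{k,s}(\xi)|^2\,d\xi$ and $\lambda_{k,s}(\Omega)=\cE_{\g,s}(\psi_{k,s},\psi_{k,s})=\int_{\R^N}(1+|\xi|^2)^s\,|\hat\psi_{k,s}(\xi)|^2\,d\xi$.

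The key elementary inequality I would use is that for $s\in(0,1)$ and $r\ge 0$ one has $\log(1+r)\le \frac{(1+r)^s-1}{s}$; this follows from the convexity of $\tau\mapsto (1+r)^{\tau}$ (or directly from $(1+r)^s = e^{s\log(1+r)}\ge 1+s\log(1+r)$). Applying this pointwise with $r=|\xi|^2$ gives
\[
\cE_{\g}(\psi_{k,s},\psi_{k,s}) \;=\; \int_{\R^N}\log(1+|\xi|^2)\,|\hat\psi_{k,s}(\xi)|^2\,d\xi \;\le\; \int_{\R^N}\frac{(1+|\xi|^2)^s-1}{s}\,|\hat\psi_{k,s}(\xi)|^2\,d\xi \;=\; \frac{\lambda_{k,s}(\Omega)-1}{s}.
\]
By \eqref{eq:lim-inf-limsup-prelim} in Lemma \ref{upper-est-lambda_k}, $\limsup_{s\to 0^+}\frac{\lambda_{k,s}(\Omega)-1}{s}\le\lambda_{k,\log}(\Omega)<\infty$, and by \eqref{uniform-upper-bound} the quantity $\frac{\lambda_{k,s}(\Omega)-1}{s}$ is bounded by $C$ on all of $(0,1)$; hence $\cE_{\g}(\psi_{k,s},\psi_{k,s})\le C$ uniformly in $s\in(0,1)$. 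Therefore $\|\psi_{k,s}\|_{H^{\log}(\R^N)}^2 = 1 + \cE_{\g}(\psi_{k,s},\psi_{k,s}) \le 1+C$, so $\{\psi_{k,s}:s\in(0,1)\}$ is bounded in $\cH^{\log}_0(\Omega)$ (recall $\psi_{k,s}\equiv 0$ on $\R^N\setminus\Omega$).

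Finally, since $\Omega$ is bounded (indeed Lipschitz), the embedding $\cH^{\log}_0(\Omega)\hookrightarrow L^2(\Omega)$ is compact by \eqref{emb} of Lemma \ref{Properties}; a bounded subset of $\cH^{\log}_0(\Omega)$ is thus relatively compact in $L^2(\Omega)$, which completes the proof. The only mildly delicate point is justifying the pointwise inequality $\log(1+r)\le\frac{(1+r)^s-1}{s}$ and making sure the constant bounding $\frac{\lambda_{k,s}(\Omega)-1}{s}$ is genuinely uniform over $(0,1)$ rather than only asymptotic as $s\to 0^+$ — but \eqref{uniform-upper-bound} already supplies exactly this uniform bound.
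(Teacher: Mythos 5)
Your proposal is correct and follows essentially the same route as the paper: both bound $\cE_{\g}(\psi_{k,s},\psi_{k,s})$ by $\frac{\lambda_{k,s}(\Omega)-1}{s}\le C$ using the Fourier representation and the pointwise symbol inequality $\log(1+r^2)\le\frac{(1+r^2)^s-1}{s}$ (the paper derives it from the integral representation $\frac{(1+r^2)^s-1}{s}=\int_0^1\log(1+r^2)(1+r^2)^{st}\,dt$, you from $e^x\ge 1+x$), and then invoke the compact embedding $\cH^{\log}_0(\Omega)\hookrightarrow L^2(\Omega)$. No gaps.
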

 \begin{proof}
To ease notation, we set $\psi_{s}:=\psi_{k,s}$,  the $k$-th  $L^2$-normalized eigenfunction corresponding to $\lambda_{k,s}(\Omega)$, $k\in \N$.  By  \eqref{eq:lim-lambda-k-to-1}, there exits a constant $C=C(N,\Omega,k)>0$  such  that 
\begin{align*}
C\ge \frac{\lambda_{k,s}(\Omega)-1}{s}= \frac{\cE_{\g,s}(\psi_s,\psi_s)-1}{s}&=\int_{\R^N}\frac{(1+|\xi|^2)^s-1}{s}|\psi_s(\xi)|^2\ d\xi\\
&= \int_0^1\int_{\R^N}\log(1+|\xi|^2)|\psi_s(\xi)|^2(1+|\xi|^2)^{st}\ d\xi dt\\
&\ge \frac{1}{2}\int_0^1\int_{\R^N}\log(1+|\xi|^2)|\psi_s(\xi)|^2\ d\xi dt = \frac{1}{2}\cE_{\g}(\psi_s,\psi_s).
\end{align*}
Therefore,  there exist a constant $M:=M(\Omega,k,N)>0$ such that 
\begin{equation}\label{bound-egenf-k}
\sup_{s\in (0,1)}\|\psi_s\|_{\cH^{\log}(\Omega)}\le M
\end{equation}
We  conclude from \eqref{bound-egenf-k} that $\psi_s$ remains uniformly bounded in $\cH_0^{\log}(\Omega)$  for   $s\in(0,1)$.  Consequently $\{\psi_{k,s}: s\in (0,1)\}$ is uniformly bounded in $\cH^{\log}_0(\Omega)$ and relatively compact in $L^2(\Omega)$ since  we have from \eqref{emb} that  $\cH_0^{\log}(\Omega)\hookrightarrow L^2(\Omega)$ is compact.
\end{proof}
We now give the 
\begin{proof}[Proof of Theorem \ref{convergent-1}] The  proof follows the idea  in article  \cite[Theorem 2.10]{RefPST} by the author combined with  \cite[Theorem 3.5]{RefHT}. 
It then suffices, in view of Lemma~\ref{upper-est-lambda_k}, to consider an arbitrary sequence $(s_n)_n\subset(0,1)$ with $\lim\limits_{n\to \infty}s_n=0$, and to show that, after passing to a subsequence, 
\begin{equation}\label{derivative-k2}
\lim_{n \to \infty} \frac{\lambda_{k,s_n}(\Omega)- 1}{s}= \lambda_{k,\log}(\Omega) \qquad \text{for $k \in \N$.}
\end{equation}
Let $\{\psi_{k,s_n}\::\: k \in \N\}$ be an orthonormal system of eigenfunctions corresponding to the Dirichlet eigenvalue $\lambda_{k,s_n}(\Omega)$ of $(I-\Delta)^{s_n}$. By Lemma \ref{prop-b}, it follows that, for every $k \in \N$, the sequence of functions 
$\psi_{k,s_n}$, $n\in \N$ is bounded in $\cH^{\log}_0(\Omega)$ and relatively compact in $L^2(\Omega)$. Consequently, we may pass to a subsequence such that, for every $k \in \N$, 
\begin{equation}
   \psi_{k,s_n} \rightharpoonup \psi^{\star}_{k,\log}\:\text{ weakly  in  $\cH^{\log}_0(\Omega)$} \;\: \text{and}\;\:  \psi_{k,s_n} \rightarrow \psi^{\star}_{k,\log}\: \text{ strongly  in $L^2(\Omega)$}\quad \text{as $n \to \infty$.}\label{conv-of-phi}
\end{equation}
Moreover, by Lemma~\ref{upper-est-lambda_k}, we may, after passing again to a subsequence if necessary, assume that, for every $k \in \N$, 
\begin{equation}
\displaystyle \frac{\lambda_{k,s_n}(\Omega)-1}{s_n}\: \rightarrow \: \lambda^{\star}_k\in \Big[-\infty,\lambda_{k,\log}(\Omega)\Big] \qquad \text{as $n \to \infty$.} \label{conv-of-lamda}
\end{equation}
To prove  then \eqref{derivative-k2}, it now suffices to show that 
\begin{equation}\label{derivative-k2-sufficient}
\lambda_{k,\log}(\Omega)= \lambda^{\star}_k \qquad \text{for every $k \in \N$.}
\end{equation}
It follows from \eqref{conv-of-phi} that
\begin{equation}
  \label{eq:orthonormal-limit-system}
\|\psi^{\star}_{k,\log}\|_{L^2(\Omega)} = 1 \quad \text{and} 
\quad \langle\psi^{\star}_{k,\log},\psi^{\star}_{\ell,\log}\rangle_{L^2(\Omega)}= 0\qquad \text{for $k, \ell \in \N$, $\ell \not = k$.}
\end{equation}
Moreover, for $v\in C^2_c(\Omega)$ and $n \in \N$,  we have from Theorem \ref{eigenvalue} that
\begin{equation}\label{test-phi-k-s}
 \cE_{\g, s_n}(\psi_{k,s_n},v) = \lambda_{k,s_n}(\Omega)\langle\psi_{k,s_n},v\rangle_{L^2(\Omega)}
\end{equation} 
and therefore, rearranging \eqref{test-phi-k-s}, it follows  from  $(i)$ in  Theorem \ref{Result-1} with $p=2$ that
\begin{equation}\label{conv-of-q}
\begin{split}
\lim_{n\rightarrow \infty}\frac{\lambda_{k,s_n}(\Omega)-1}{s_n}\langle \psi_{k,s_n},v\rangle_{L^2(\Omega)}&= \lim_{n\rightarrow \infty}\frac{1}{s_n}\Big( \cE_{\g,s_n}(\psi_{k,s_n},v)-\langle\psi_{k,s_n},v\rangle_{L^2(\Omega)}\Big) \\
&=\lim_{n\rightarrow\infty}\Big\langle\psi_{k,s_n},\frac{(I-\Delta)^{s_n}v-v}{s_n}\Big\rangle_{L^2(\Omega)} \\
&=\langle \psi^{\star}_{k,\log},\logrel v\rangle_{L^2(\Omega)} = \cE_{\g}(\psi^{\star}_{k,\log},v).
\end{split}
\end{equation}
Since moreover $\langle\psi_{k,s_n},v\rangle_{L^2(\Omega)}\to \langle\psi^{\star}_{k,\log},v\rangle_{L^2(\Omega)}$  as $n\to \infty$ for any $k\in \N$ and $v\in C^2_c(\Omega)$, in particular, for $k=1$, we may choose $v\in C^2_c(\Omega)$ such that $\langle\psi^{\star}_{1,\log},v\rangle_{L^2(\Omega)}>0$.  It follows  from \eqref{conv-of-lamda} and  \eqref{conv-of-q} that $\lambda^{\star}_1$  satisfies $ -\infty<\lambda^{\star}_1\le \lambda_{1,\log}(\Omega)$ and
\begin{equation}\label{phi-s0}
\cE_{\g}(\psi^{\star}_{1,\log},v) = \lambda^{\star}_1 \langle \psi^{\star}_{1,\log}, v \rangle_{L^2(\Omega)} ~~~~\text{ for all $v\in \H_0^{\log}(\Omega)$.}
\end{equation}
 Thus $\psi^{\star}_{1,\log}$ is an eigenfunction of $\logrel$ corresponding to the eigenvalue $\lambda^{\star}_1$. Since $\lambda^{\star}_1\le \lambda_{1,\log}(\Omega)$,  it follows  from the definition of the principal eigenvalue  \eqref{Lambda-1} that $\lambda^{\star}_1=\lambda_{1,\log}(\Omega)$ and then  $\lambda_{1,\log}(\Omega)=\lambda^{\star}_1\le \displaystyle\liminf_{s\to 0^+}\frac{\lambda_{1,s}(\Omega)-1}{s}$.  From the uniqueness of the first eigenfunction,  we get that $ \psi^{\star}_{1,\log}=\psi_{1,\log}$ is  the nonnegative  $L^2$-normalized eigenfunction of $\logrel$ corresponding to $\lambda_{1,\log}(\Omega)$. In short, we have just shown that as $s\to 0^+$,
 \[
 \frac{\lambda_{1,s}(\Omega)-1}{s}\to \lambda_{1,\log}(\Omega)\quad\text{ and }\quad \psi_{1,s}\to \psi_{1,\log}\quad \text{ in }\quad L^2(\Omega).
 \]
 This  completes the proof for $k=1$.
Now for $k\ge 2$,    it still follows from \eqref{conv-of-lamda} and   \eqref{conv-of-q}  that 
\begin{equation}\label{phi-s0}
\cE_{\g}(\psi^{\star}_{k,\log},v) = \lambda^{\star}_k \langle \psi^{\star}_{k,\log}, v \rangle_{L^2(\Omega)} ~~~~\text{ for all $v\in C^2_c(\Omega)$,}
\end{equation}
where  $\psi^{\star}_{k,\log}$ is a Dirichlet eigenfunction of  $\logrel$ corresponding to $\lambda^{\star}_k$,  now with 
\begin{equation}\label{L-k}
\lambda^{\star}_k\in [\lambda_{1,\log}(\Omega),\lambda_{k,\log}(\Omega)].
\end{equation}
Next, for fixed $k \in \N$  we consider $E^{\star}_{k}:=\text{span}\{\psi^{\star}_{1,\log},\psi^{\star}_{2,\log},\cdots,\psi^{\star}_{k,\log}\}$, which is a $k$-dimensional subspace of $\cH^{\log}_0(\Omega)$  by \eqref{eq:orthonormal-limit-system}. Since 
\[
\lambda^{\star}_1\le \lambda^{\star}_2\leq \ldots \leq \lambda^{\star}_k
\]
as a consequence of \eqref{L-k} and since $\lambda_{i,s_n}(\Omega)\leq \lambda_{j,s_n}(\Omega)$ for $1\leq i\leq j\leq k$, $n\in\N$, we have the following estimate for every $v =\sum\limits_{i=1}^k\alpha_{i}\psi^{\star}_{i,\log} \in E^{\star}_{k}$ with $\alpha_{1},\cdots,\alpha_{k}\in \R$:
\begin{align}
  \cE_{\g}(v,v) &=  \sum_{i,j=1}^k\alpha_{i}\alpha_{j}\cE_{\g}(\psi^{\star}_{i,\log},\psi^{\star}_{j,\log})=
               \sum_{i,j=1}^k\alpha_{i}\alpha_{j}\lambda_{i}^{\star} \langle \psi^{\star}_{i,\log},\psi^{\star}_{j,\log} \rangle_{L^2(\Omega)}\\
& = \sum_{i=1}^k \alpha_{i}^2 \lambda_{i}^{\star} \|\psi^{\star}_{i,\log}\|_{L^2(\Omega)}^2 \le \lambda_{k}^{\star} \sum_{i=1}^k \alpha_{i}^2 = \lambda_{k}^{\star} \|v\|_{L^2(\Omega)}^2.   
\label{sum}
\end{align}
The characterization in \eqref{char-eigen} now yields that 
\[
\lambda_{k,\log}(\Omega) \le \max_{\substack{v\in E^{\star}_{k} \\ \|v\|_{L^2(\Omega)}=1}}\cE_{\g}(v,v) \le \lambda_{k}^{\star}.
\]
Since also $\lambda_{k}^{\star} \le \lambda_{k,\log}(\Omega)$ by \eqref{conv-of-lamda}, the equality in  \eqref{derivative-k2-sufficient} follows. We thus conclude that \eqref{derivative-k2} holds and also \eqref{asymptotic-exp} follows. Moreover, the statement \eqref{convergent-eigen-k} of the theorem  follows a posteriori from the equality $\lambda_{k}^{\star} = \lambda_{k,\log}(\Omega)$, since we have already seen that $\psi_{k,s_n} \to \psi^{\star}_{k,\log}$ in $L^2(\Omega)$,  the proof is thus finished here.
\end{proof}

\section{ Decay Estimates}\label{Sect-Regularity}
This section  deals with  the proof of     Proposition \ref{Decay} concerning the  decay estimates at infinity and at zero of the  solution $u$ corresponding to Poisson problem,
\begin{equation}\label{Eq:fund-solution-1}
(I-\Delta)^{\log}u =f\qquad\text{in}\quad \R^N.
\end{equation}
The fundamental solution of equation \eqref{Eq:fund-solution-1} can be is given in term of  the Green  function $G: \R^N\setminus\{0\}\to \R$ (see\eqref{Green}) defined by
$$
G(x)= \int_{0}^{\infty}\frac{1}{\Gamma(t)}\int_0^{\infty} p_s(x)s^{t-1}e^{-s}\ ds dt,
$$
We have  in the sense of distributional  that  $\cF(G)(\xi)= \frac{1}{\log(1+|\xi|^2)}$, ~ $\xi \in \R^N\setminus\{0\}$. Indeed, for $\phi\in S$, we have by Fubini's theorem  that
\begin{align*}
\int_{\R^N}G(\xi)\cF(\phi)(\xi)\ d\xi &= \int_{0}^{\infty}\frac{1}{\Gamma(t)}\int_0^{\infty} \int_{\R^N} p_s(\xi)\cF(\phi)(\xi)\ d\xi s^{t-1}e^{-s}\ ds dt\\
&= \int_{\R^N} \int_{0}^{\infty}\frac{1}{\Gamma(t)}\int_0^{\infty} e^{-s(1+|\xi|^2)} s^{t-1}\ ds dt\ \phi(\xi)\ d\xi\\
&=\int_{\R^N} \int_{0}^{\infty}(1+|\xi|^2)^{-t} dt\ \phi(\xi)\ d\xi= \int_{\R^N} \frac{1}{\log(1+|\xi|^2)}\ \phi(\xi)\ d\xi,
\end{align*}
 and then
\[
\cF^{-1}\Big( \frac{1}{\log(1+|\xi|^2)}\Big)(x)= G(x)\quad \text{ for  } x\in \R^N\setminus\{0\}.
\]
We  then define the  solution $u$ of  equation \eqref{Eq:fund-solution-1}  for a $f\in \cC_c^{\infty}(\R^N)$ by
\begin{equation}\label{fund-solution}
u(x)= [G\ast f](x)= \int_{\R^N} G(x-y)f(y) \ dy\quad \text{ for } x\in\R^N.
\end{equation}
This follows from the property of   Fourier transform  and convolution since 
\[
\cF(u) = \cF(G)\cF(f)\quad \text{ and }\quad \log(1+|\xi|^2)\cF(u)= \log(1+|\xi|^2)\cF(G)\cF(f)=\cF(f).
\]
We now  give the 
\begin{proof}[Proof of Proposition \ref{Decay}] 
For $|x|$ small,  We split the integral representation of $G$ in two pieces as follows
\[
G_1(x) =\frac{2^{1-N}}{\pi^{N/2}}\int_{0}^{\frac{N}{2}}\frac{1}{\Gamma(t)}\left(\frac{|x|}{2}\right)^{t-\frac{N}{2}}K_{t-\frac{N}{2}}(|x|)\ dt 
\]
and
\[
 G_2(x)=\frac{2^{1-N}}{\pi^{N/2}}\int_{\frac{N}{2}}^{\infty}\frac{1}{\Gamma(t)}\left(\frac{|x|}{2}\right)^{t-\frac{N}{2}}K_{t-\frac{N}{2}}(|x|)\ dt.
\]
Since $t\le \frac{N}{2}$, it  follows from  the asymptotics property \eqref{M-Bessl-Asymtotic} for $K_{\nu}$   (see \cite{RefRE}) that as  ~$|x|\to 0$,
\[
K_{t-\frac{N}{2}}(|x|)\sim 2^{|t-\frac{N}{2}|-1}\Gamma(|t-\frac{N}{2}|)|x|^{-|t-\frac{N}{2}|} \sim \begin{cases}2^{\frac{N}{2}-t-1}\Gamma(N/2-t)|x|^{-\frac{N}{2}+t}\quad\text{if } \quad t<\frac{N}{2} ,\\
\log\frac{1}{|x|} \quad\text{if } \qquad t=\frac{N}{2}.
\end{cases}
\]
Plugging the above approximations in  $G_1$,  we  end up with
 \begin{equation}\label{G_1}
	\begin{split}
	G_1(x)\sim\quad\left\{\begin{aligned}
		\frac{2^{1-N}}{\pi^{N/2}}\log\frac{1}{|x|}\hspace{2.5cm}& \quad\text{  as }\quad |x|\to 0\quad\text{ if }\quad t=\frac{N}{2}\\
    \frac{2^N}{\pi^{N/2}}|x|^{-N} \int_{0}^{\frac{N}{2}}\frac{\Gamma(N/2-t)}{4^t\Gamma(t)}\ dt \hspace{0.cm}& \quad\text{  as }\quad |x|\to 0\quad\text{ if }\quad t< \frac{N}{2},
	\end{aligned}\right.
	\end{split}
	\end{equation}
	where we have used that since $N>2t$, $|x|^{-N+2t}\sim |x|^{-N}$ as $|x|\to 0$. Since also $ t< \frac{N}{2}$, we have $\int_{0}^{\frac{N}{2}}\frac{\Gamma(N/2-t)}{4^t\Gamma(t)}\ dt<\infty$.
	Now, for $t>\frac{N}{2}$,  again by using \eqref{M-Bessl-Asymtotic},  we have 
\begin{align*}
K_{t-\frac{N}{2}}(|x|)&\sim 2^{t-\frac{N}{2}-1}\Gamma(t-\frac{N}{2})|x|^{-t+\frac{N}{2}} \quad\text{ as }\quad|x|\to 0.
\end{align*}
Taken the above approximations into account, we get the approximation for $G_2$, 
\begin{equation}\label{G_2}
G_2(x)\sim \frac{2^{-N}}{\pi^{N/2}}\int_{\frac{N}{2}}^{\infty}\frac{\Gamma(t-N/2)}{\Gamma(t)}\ dt\quad\text{ as }\qquad|x|\to 0
\end{equation}
Since  $\displaystyle\lim_{t\to +\infty} \frac{\Gamma(t-\frac{N}{2})}{4^t\Gamma(t)}=0$ and $t>\frac{N}{2}$, we infer that $\int_{2N}^{\infty}\frac{\Gamma(t-\frac{N}{2})}{\Gamma(t)}\ dt<\infty$. Therefore,  
  combining the approximations of $G_1$ in \eqref{G_1} and $G_2$ in \eqref{G_2} we get 
\[
|x|^NG(x) \sim\frac{2^N}{\pi^{N/2}}\int_{0}^{\frac{N}{2}+1}\frac{\Gamma(N/2-t)}{4^t\Gamma(t)}\ dt  \quad \text{ as } \quad |x|\to 0. 
\]
We next investigate the case with the modulus of  $|x|$ large. From the asymptotics property \eqref{M-Bessl-Asymtotic}  we have for all $t\ge 0$ that
\begin{align*}
|x|^{t-\frac{N}{2}}K_{t-\frac{N}{2}}(|x|)&\sim \frac{\pi^{\frac{1}{2}}}{\sqrt{2}}|x|^{-\frac{N+1}{2}+t}e^{-|x|}\quad \text{ as } \quad |x|\to \infty\\
&\sim \frac{\pi^{\frac{1}{2}}}{\sqrt{2}}|x|^{-\frac{N+1}{2}}e^{-|x|}\qquad \text{ as } \quad |x|\to \infty. 
\end{align*}
From this, we infer that
\begin{align*}
G(x)& \sim        ~2^{-\frac{N+1}{2}}\pi^{-\frac{N-1}{2}}|x|^{-\frac{N+1}{2}}e^{-|x|}\int_{0}^{\infty}\frac{1}{2^{t}\Gamma(t)}\ dt \quad \text{ as }\quad |x|\to \infty.
\end{align*}
Noticing that $\displaystyle\lim_{t\to 0} 1/(2^t\Gamma(t))=0=\lim_{t\to +\infty} 1/(2^t\Gamma(t))$, the above integral is finite and 
\[
\int_{0}^{\infty}\frac{1}{2^{t}\Gamma(t)}\ dt\sim 1.
\]
We therefore infer that
\[
G(x)\sim ~2^{-\frac{N+1}{2}}\pi^{-\frac{N-1}{2}}|x|^{-\frac{N+1}{2}}e^{-|x|}\quad \text{ as } |x|\to \infty.
\]
For $f\in L^1(\R^N)$, we write 
\begin{align*}
u(x)=  \int_{\R^N}G(x-y) f(y)\ dy=\int_{\R^N}G(y) f(x-y)\ dy 
\end{align*}
First observe that if $f\ge 0$, we have that 
\begin{align*}
u(x)\ge \int_{B(x,|x|)} G(x-y) f(y)\ dy \ge Ce^{-|x|}\int_{B(x,|x|)}  f(y)\ dy.
\end{align*}
Since $B(x,|x|)\to \R^N$ as $|x|\to \infty$ and $f\in L^1(\R^N)$, we see that 
$
u(x)=O(e^{-|x|})
$
as $|x|\to \infty$. 
Moreover, Since $G(x)$ decays  as $e^{-|x|}$ at infinity, there exists a constant $M>0$ such that 
\[
\| e^{|\cdot|}G\|_{L^{\infty}(\R^N)}<C \quad\text{ for }\quad |x|\ge M,
\]
where $C>0$ is a positive constant. We then write
\begin{align*}
e^{|x|}u(x)=  [e^{|\cdot|}G\ast f](x)&=\int_{\R^N}e^{|y|}G(y) f(x-y)\ dy.
\end{align*}
 Thus,
\begin{align*}
|e^{|x|}u(x)|\le \Big| \int_{\R^N}e^{|y|}G(y) f(x-y)\ dy\Big|
& \le \|e^{|\cdot|}G\|_{ L^{\infty}(\R^N)}\int_{\R^N} |f(x-y)|\ dy\\
&\le C\|f\|_{L^1(\R^N)}.
\end{align*}
This allows to conclude that $u(x)$ decays  as $e^{-|x|}$ at infinity, that is 
\[
u(x)= O(e^{-|x|}) \qquad\text{ as}\quad |x|\to \infty.
\]
As before, there exists $\delta>0$ such that   
\[
\||\cdot|^{N}u\|_{L^{\infty}(\R^N)}<C \quad \text{ for } |x|<\delta.
\]
Therefore,
\begin{align*}
||x|^Nu(x)|&\le  C\int_{\R^N} |f(x-y)|\ dy \le C\|f\|_{L^{1}(\R^N)}.
\end{align*}
This allows to conclude that
$$
u(x)=O(|x|^{-N}) \quad\text{ as } \quad |x|\to 0.
$$
This completes the proof of Theorem \ref{Decay}.
\end{proof}

\section{Additional remarks}\label{Additional}
We present in this section some results concerning the logarithmic Schr\"odinger operator  $\logrel$ that can be directly deduced  from known results in the literature.
For this fact, we introduce the following space $\cV_{\g}(\Omega)$, being the space of all functions $u\in L^2_{loc}(\R^N)$ such that 
\[
\rho(u,\Omega) :=\int_{\Omega}\int_{\R^N}\frac{(u(x)-u(y))^2}{|x-y|^{N}}\omega(|x-y|)\ dxdy<\infty.
\]
Then the quantity $\cE_{\g}(u,v)$ is well dined for $u\in \cH^{\log}_0(\Omega)$ and $v\in \cV_{\g}(\Omega)$ (see \cite[Lemma 3.1]{RefJ2}). The proof of the following results on the maximum principle for the operator $(I-\Delta)^{\log}$ on an open set $\Omega$ of $\R^N$ can be  deduced from \cite{RefJ2}.
\begin{thm}
\begin{itemize}
\item[(i)](Strong maximum principle)
Let $\Omega\subset \R^N$ be a bounded subset and $u\in \cL_0(\R^N)$ be a continuous function on $\overline{\Omega}$ satisfying 
\[
(I-\Delta)^{\log}u\ge 0\quad\text{ in} \quad\Omega, \qquad u\ge 0 \quad \text{ in }\quad\R^N\setminus\Omega.
\]
Then \ $u>0$ \ in \ $\Omega$ \ or \ $u\equiv 0$ \ a.e. \ in \ $\R^N$.
\item[(ii)] (Weak maximum principle)
Let $u\in \cV_{\g}(\Omega)$ with $(I-\Delta)^{\log}u\ge 0$ in $\Omega$ weakly and $u\ge 0$ in $\R^N\setminus \Omega$. Then $u\ge 0$ in $\R^N$.
\item[(iii)](Small volume maximum principle)There exists $\delta> 0$ such that for every open bounded set $\Omega$ of $\R^N$ with $|\Omega|\le \delta$ and every function $u\in \cV_{\g}(\Omega)$ satisfying 
\[
(I-\Delta)^{\log}u\ge c(x)u \quad\text{ in }\  \Omega \quad\text{ and } \quad u\ge 0 \ \text{ in }\quad\R^N\setminus \Omega,
\]
with $c\in L^{\infty}(\R^N)$, then $u\ge 0$ in $\R^N$.
\end{itemize}
\end{thm}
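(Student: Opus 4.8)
The plan is to observe that $(I-\Delta)^{\log}$ is an operator of the abstract nonlocal type $Lu(x)=\int_{\R^N}(u(x)-u(y))J(x-y)\,dy$ studied in \cite{RefJ2}, so that all three statements become instances of the maximum principles proved there once the structural features of $J$ are checked. By \eqref{kernel} the kernel $J$ is strictly positive and symmetric; by \eqref{Asymptoti-o} it behaves like $|z|^{-N}$ near the origin and decays exponentially at infinity, so that $\int_{\R^N\setminus B_\delta}J(z)\,dz<\infty$ for every $\delta>0$ while $\int_{B_1}|z|^2J(z)\,dz<\infty$; and the associated form $\cE_\g$ obeys the Poincar\'e inequality of Lemma~\ref{Properties}(ii) on every set of finite measure, with a constant that degenerates as the measure shrinks to zero. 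Granting these facts, I would run the following arguments.

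For (i), I would argue by contradiction: assume $u\not\equiv0$ a.e.\ yet $u(x_0)\le0$ for some $x_0\in\Omega$. Since $u$ is continuous on $\overline\Omega$ and $u\ge0$ on $\partial\Omega\subset\R^N\setminus\Omega$, the minimum of $u$ over $\overline\Omega$ is nonpositive and attained at an interior point $x_1\in\Omega$, and in fact $u(x_1)\le u(y)$ for a.e.\ $y\in\R^N$. Then $u(x_1)-u(y)\le0$ a.e., so by positivity of $J$ the integral defining $(I-\Delta)^{\log}u(x_1)$ is well defined in $[-\infty,0]$ (here $u\in\cL_0(\R^N)$ makes the far field integrable); the hypothesis $(I-\Delta)^{\log}u\ge0$ in $\Omega$ forces it to equal zero, which, again by positivity of $J$, gives $u(y)=u(x_1)$ for a.e.\ $y$. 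Combined with $u\ge0$ on the positive-measure set $\R^N\setminus\Omega$ this yields $u\equiv0$ a.e., a contradiction; hence $u>0$ in $\Omega$ or $u\equiv0$ a.e.

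For (ii), the weak inequality $(I-\Delta)^{\log}u\ge0$ in $\Omega$ unwinds to $\cE_\g(u,\varphi)\ge0$ for all $0\le\varphi\in C_c^\infty(\Omega)$, hence for all $0\le\varphi\in\cH_0^{\log}(\Omega)$ by density. Since $u\ge0$ outside $\Omega$, the function $u^-$ vanishes there and lies in $\cH_0^{\log}(\Omega)$ (by Lemma~\ref{Embedd}, resp.\ \cite[Lemma 3.2]{RefJ2}), so it is an admissible test function. Writing $u=u^+-u^-$ and invoking the pointwise inequality $2(u^+(x)-u^+(y))(u^-(x)-u^-(y))=-2\bigl(u^-(x)u^+(y)+u^-(y)u^+(x)\bigr)\le0$ exploited in the proof of Lemma~\ref{Embedd}, I obtain
\[
0\le\cE_\g(u,u^-)=\cE_\g(u^+,u^-)-\cE_\g(u^-,u^-)\le-\cE_\g(u^-,u^-)\le0,
\]
so $\cE_\g(u^-,u^-)=0$, and the Poincar\'e inequality of Lemma~\ref{Properties}(ii) then forces $u^-=0$, i.e.\ $u\ge0$ a.e.\ in $\R^N$.

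For (iii), the same test function $u^-$ applied to $(I-\Delta)^{\log}u\ge c(x)u$, together with $u\,u^-=-(u^-)^2$, gives $\cE_\g(u,u^-)\ge-\int_\Omega c(u^-)^2\,dx\ge-\|c^+\|_{L^\infty(\Omega)}\|u^-\|_{L^2(\Omega)}^2$, hence $\cE_\g(u^-,u^-)\le\|c^+\|_{L^\infty(\Omega)}\|u^-\|_{L^2(\Omega)}^2$ as in (ii). Inserting the Poincar\'e inequality $\|u^-\|_{L^2(\Omega)}^2\le C(N,|\Omega|)\cE_\g(u^-,u^-)$ yields $\cE_\g(u^-,u^-)\le\|c^+\|_{L^\infty(\Omega)}C(N,|\Omega|)\cE_\g(u^-,u^-)$, so choosing $\delta>0$ with $\|c^+\|_{L^\infty}C(N,\delta)<1$ forces $\cE_\g(u^-,u^-)=0$, i.e.\ $u\ge0$, whenever $|\Omega|\le\delta$. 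I expect this last step to be the delicate one, and the only genuinely operator-specific point rather than a transcription of \cite{RefJ2}: inspecting the proof of Lemma~\ref{Properties}(ii) shows that the optimal choice of $R$ there makes $C(N,|\Omega|)$ decay only like $(\log\frac1{|\Omega|})^{-1}$, since $\cE_\g$ has ``order zero'', so that $\delta$ must be allowed to depend on $\|c^+\|_{L^\infty}$, in contrast with the fractional-order setting.
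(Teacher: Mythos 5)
Your proposal is correct and follows essentially the same route as the paper, which offers no independent argument for this theorem but simply defers to the nonlocal maximum-principle framework of \cite{RefJ2}; you carry out exactly that reduction (checking positivity, symmetry and integrability of $J$) and additionally sketch the standard proofs — evaluation at an interior minimum for (i), testing with $u^-$ for (ii), and $u^-$-testing combined with the smallness of the Poincar\'e constant for (iii). Your remark that $\delta$ in (iii) must be allowed to depend on $\|c^+\|_{L^\infty}$ is accurate and matches the formulation in \cite{RefJ2}, quietly correcting the paper's loose quantification; note only that the same dependence on $\|c^+\|_{L^\infty}$ is also needed in the fractional-order setting, the genuine difference being the logarithmic (rather than power-type) decay of the Poincar\'e constant as $|\Omega|\to 0$.
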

We recall that,  $u\in\cV_{\g}(\Omega)$ satisfies  $(I-\Delta)^{\log}u\ge 0$ in $\Omega$ weakly means, 
$$\text{$\cE_{\g}(u,\phi)\ge 0$ for all nonnegative $\phi\in \cC^{\infty}_c(\Omega)$. }$$
Next, consider the following semilinear elliptic problem involving the operator $(I-\Delta)^{\log}$ in a bounded set $\Omega$ of $\R^N$,
\begin{equation}\label{eq-nonlinear}
	\begin{split}
		(I-\Delta)^{\log}u  =   f(x,u)  \text{ \ \   in \ \  $\Omega$}\qquad
		u =0              \text{ on }  \mathbb{R}^N\setminus \Omega,
	\end{split}
	\end{equation}
where  $f:\Omega\times\R\to\R$ is continuous. 
The following result on the radially symmetry of the solution can deduced from \cite{RefST1}

\begin{thm}\label{Nonlinear-result}
 Assume that $f$ is locally Lipschitz with respect to the second variable and  radially symmetry and strictly decreasing in $r=|x|$. Then every positive solution of \eqref{eq-nonlinear} is radially symmetry and strictly decreasing in $|x|$.
\end{thm}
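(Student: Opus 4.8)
We only sketch the argument; it is the method of moving planes adapted to the nonlocal operator $(I-\Delta)^{\log}$, and the analytic details follow \cite{RefST1} together with the (antisymmetric versions of the) maximum principles recorded above. Since the statement concerns radial symmetry, the domain $\Omega$ is a ball $B_R$ centered at the origin, and we take $u$ to be a bounded positive solution of \eqref{eq-nonlinear} that is continuous on $\overline{\Omega}$. Because the kernel $J(z)=d_N\,\omega(|z|)|z|^{-N}$ is radially symmetric, $(I-\Delta)^{\log}$ commutes with every rigid motion of $\R^N$; hence it suffices to prove that $u$ is symmetric and strictly decreasing in the direction $e_1$.

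For $\lambda\in(0,R)$ put $T_\lambda=\{x\in\R^N:\ x_1=\lambda\}$, $\Sigma_\lambda=\{x\in B_R:\ x_1>\lambda\}$, write $x^\lambda=(2\lambda-x_1,x_2,\dots,x_N)$ for the reflection of $x$ across $T_\lambda$, and set $u_\lambda(x):=u(x^\lambda)$ and $w_\lambda:=u_\lambda-u$. The function $w_\lambda$ is antisymmetric with respect to $T_\lambda$, i.e. $w_\lambda(x^\lambda)=-w_\lambda(x)$, and $|x^\lambda|<|x|$ for all $x\in\Sigma_\lambda$ (since $x_1>\lambda>0$). As $(I-\Delta)^{\log}$ commutes with the reflection across $T_\lambda$ and $x^\lambda\in\Omega$ for $x\in\Sigma_\lambda$, we have $(I-\Delta)^{\log}u_\lambda(x)=\big((I-\Delta)^{\log}u\big)(x^\lambda)=f(|x^\lambda|,u_\lambda(x))$, hence for $x\in\Sigma_\lambda$
\[
(I-\Delta)^{\log}w_\lambda(x)=\big[f(|x^\lambda|,u_\lambda(x))-f(|x^\lambda|,u(x))\big]+\big[f(|x^\lambda|,u(x))-f(|x|,u(x))\big].
\]
Since $f$ is locally Lipschitz in the second variable and $u$ is bounded, the first bracket equals $c_\lambda(x)\,w_\lambda(x)$ with $c_\lambda\in L^\infty(\Sigma_\lambda)$, while the second bracket is $\ge 0$ (and $>0$ where $w_\lambda$ is to be compared to $0$) because $|x^\lambda|<|x|$ and $f$ is strictly decreasing in $r=|x|$. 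Thus $w_\lambda$ satisfies, weakly in $\Sigma_\lambda$,
\[
(I-\Delta)^{\log}w_\lambda+c_\lambda(x)\,w_\lambda\ \ge\ 0,\qquad c_\lambda\in L^\infty(\Sigma_\lambda).
\]

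The conclusion now follows from the usual moving-plane scheme. For $\lambda$ close to $R$ the cap $\Sigma_\lambda$ has small Lebesgue measure, so the antisymmetric small-volume maximum principle for $(I-\Delta)^{\log}$ — which holds by the same positivity of $J$ that underlies the small-volume maximum principle of Section~\ref{Additional}, with antisymmetry replacing the exterior sign condition — gives $w_\lambda\ge0$ in $\Sigma_\lambda$. Set $\lambda_0:=\inf\{\mu\in(0,R):\ w_\lambda\ge0\ \text{in}\ \Sigma_\lambda\ \text{for all}\ \lambda\in(\mu,R)\}$; by continuity $w_{\lambda_0}\ge0$ in $\Sigma_{\lambda_0}$. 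If $\lambda_0>0$, the antisymmetric strong maximum principle (a consequence of the strict radial monotonicity and positivity of $J$, as in the strong maximum principle of Section~\ref{Additional}) forces either $w_{\lambda_0}\equiv0$ or $w_{\lambda_0}>0$ in $\Sigma_{\lambda_0}$; the former is impossible because for $p\in\partial B_R$ with $p_1>\lambda_0$ the reflected point $p^{\lambda_0}$ lies in the interior of $B_R$, so $w_{\lambda_0}(p)=u(p^{\lambda_0})-u(p)=u(p^{\lambda_0})>0$. Hence $w_{\lambda_0}>0$ in $\Sigma_{\lambda_0}$, and decomposing $\Sigma_{\lambda_0-\varepsilon}$ into a compact part (where $w_{\lambda_0-\varepsilon}>0$ by continuity for small $\varepsilon$) and a remaining part of arbitrarily small measure near $T_{\lambda_0-\varepsilon}$ and near $\partial B_R$ (where the small-volume maximum principle applies) yields $w_{\lambda_0-\varepsilon}\ge0$ in $\Sigma_{\lambda_0-\varepsilon}$, contradicting the definition of $\lambda_0$. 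Therefore $\lambda_0=0$, i.e. $u(-x_1,x_2,\dots,x_N)\ge u(x_1,x_2,\dots,x_N)$ for $x_1>0$; the same reasoning applied in the direction $-e_1$ gives the reverse inequality, so $u$ is symmetric with respect to $\{x_1=0\}$. As $e_1$ was an arbitrary direction, $u$ is radially symmetric about the origin. Finally, running the argument for each fixed $\lambda\in(0,R)$ gives $w_\lambda>0$ in $\Sigma_\lambda$ (the symmetric alternative being excluded exactly as for $\lambda_0$), i.e. $u(x^\lambda)>u(x)$ whenever $x_1>\lambda>0$; in terms of the radial variable this says that $u$ is strictly decreasing in $|x|$.

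The only genuinely technical point is the justification, in the weak (energy–space) formulation, of the antisymmetric maximum principles invoked above: one must check that $w_\lambda$ and $-w_\lambda$ belong to the admissible class $\cV_{\g}(\Sigma_\lambda)$ and that the nonlocal tail terms, after reflecting the part of the integral over $\R^N\setminus\Sigma_\lambda$ back into $\Sigma_\lambda$, contribute with the sign dictated by the strict monotonicity $J(x-y)>J(x-y^\lambda)>0$ for $x,y\in\Sigma_\lambda$. Both facts are established exactly as in \cite{RefST1}; everything else is the standard moving-plane bookkeeping.
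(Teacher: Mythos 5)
Your sketch is correct and coincides with the paper's intended argument: the paper gives no independent proof of Theorem \ref{Nonlinear-result} but simply deduces it from \cite{RefST1}, i.e. from the moving-plane method based on antisymmetric (small-volume and strong) maximum principles for radially symmetric, strictly decreasing kernels, which is exactly the scheme you outline. The structural facts you isolate — positivity and strict radial monotonicity of $J(z)=d_N|z|^{-N}\omega(|z|)$, hence $J(x-y)>J(x-y^{\lambda})>0$ on each cap — are precisely what make the framework of \cite{RefST1} applicable here.
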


\bibliographystyle{amsplain}

\end{document}